\newtheorem{thm}{Theorem}[section]
\newtheorem{cor}[thm]{Corollary}
\newtheorem{lem}[thm]{Lemma}
\newtheorem{prop}[thm]{Proposition}
\theoremstyle{definition}
\newtheorem{defn}[thm]{Definition}
\theoremstyle{remark}
\newtheorem{rem}[thm]{Remark}
\numberwithin{equation}{section}
\newcommand{\smp}[1]{\left(#1 \right)}
\newcommand{\midp}[1]{\left[#1 \right]}
\newcommand{\bigp}[1]{\left\{#1 \right\}}
\newcommand{\R}{\mathbb{R}}
\newcommand{\N}{\mathbb{N}}
\DeclareMathOperator{\Sym}{Sym}
\DeclareMathOperator{\tr}{tr}
\providecommand{\abs}[1]{\left| #1\right|}
\providecommand{\norm}[1]{\lVert#1\rVert}
\DeclareMathOperator*{\osc}{osc}
\newcommand{\PB}{\mathcal{B}}
\newcommand{\Hp}[1]{\mathbf{H_{#1}}}
\newcommand{\Ud}{\mathcal{U}_{\delta}}
\newcommand{\wt}[1]{\widetilde{#1}}
\newcommand{\wh}[1]{\widehat{#1}}
\title{Small Perturbation Solutions for Parabolic Equations}
\author{Yu Wang}
\address{Yu Wang\\
Department of Mathematics\\
Columbia University, NY, U.S.} \email{yuwang@math.columbia.edu  }
\begin{document}

\begin{abstract}
Let $\varphi$ be a smooth solution of the parabolic equation
\[
F(D^2 u, Du, u,x, t) - u_t = 0.
\]
Assume that $F$ is smooth and uniformly elliptic only in a neighborhood of the points $(D^2 \varphi, D\varphi, \varphi,x,t)$, we show that a viscosity solution $u$ to the above equation is smooth in the interior if $\norm{u - \varphi}_{L^{\infty}}$ is sufficiently small.
\end{abstract}

\maketitle


\section{Introduction}
In this paper we present a general regularity result for small perturbation solutions of parabolic equations. It is a parabolic analogue to the result of \cite{Savin2}.

When dealing with a parabolic or elliptic equation, the classical approach to regularity is to differentiate the equation along a direction $e$. Then $u_e$ solves the linearized equation which is treated as a linear equation with measurable coefficients. When the equation is not uniformly elliptic, this approach requires \textit{a priori} bounds on $u, Du$ and $D^2u$. 

On the other hand, it is possible to obtain interior estimates from a nearby regular solution. This approach has been employed in several important works such as De Giorgi's analysis of minimal surfaces (see \cite{Giusti}) and Caffarelli's work on free boundary problems \cite{Caffarelli1}.

Recently Savin has applied this approach to study the flat level sets in Ginzburg-Landau phase transitions models \cite{Savin1}. He has also applied this approach to analyze small perturbation solutions of general elliptic equations \cite{Savin2}. The result in \cite{Savin2} has been employed by Armstrong, Silverstre and Smart to estimate the Hausdorff dimension of the singular set of a solution to a uniformly elliptic equation \cite{Armstrong1}. It has also been applied by de Silva and Savin to study the thin one-phase problem  \cite{DS}. 

It seems natural and interesting to establish an analogous result to \cite{Savin2} for parabolic equations.

\smallskip

Let $D$ represent the partial differentiation with respect to $x$-variables. Let $\Sym(n)$ be the space of $n \times n$ symmetric matrices equipped with standard spectral norm and $Q_1 = B_1 (0) \times (-1,0]$. Let
\[
F : \Sym(n) \times \R^n \times \R \times Q_1 \rightarrow \R \quad 
\]
be a function defined for $(M, p, z , x, t)$. Given a function $\varphi \in C^{2,\alpha} (Q_{1})$ and a number $\delta >0$, let
\[
\mathcal{U}_{\delta} (\varphi)= \{  (M + D^2 \varphi (x), p + D \varphi(x),z + \varphi (x),x, t) \;| \; \norm{M}, \abs{p}, \abs{z} < \delta , \; (x, t) \in Q_1\}
\] 
and
\[
F [\varphi] (x, t):= F (D^2\varphi (x, t), D \varphi (x,t), \varphi (x,t), x, t) .
\]

\smallskip

In this paper, we shall consider parabolic equations of the form
\begin{equation}
\label{Eq1}
F [u] (x, t) - u_t = 0
\end{equation}
under the following hypotheses regarding $F$ on $\Ud (\varphi)$:

$\Hp{\varphi}1)$ $F (\cdot, p, z, x, t)$ is elliptic, i.e., for every $(M,p,z,x,t) \in \Ud (\varphi)$,
\[
F (M + N, p , z, x,t) \geq F (M, p,z,x ,t),  \quad  \forall  N \geq 0.
\]

$\Hp{\varphi}2)$ $F (\cdot, p,z,x,t)$ is uniformly elliptic in $\Ud(\varphi)$, i.e.,  $ \exists \; \Lambda \geq 1\geq \lambda >0$ such that for all $(M, p,z,x,t) \in \mathcal{U}_{\delta} (\varphi)$,
\[
\Lambda \norm{N} \geq F (M + N, p,z,x,t) - F(M, p,z,x,t) \geq \lambda \norm{N}, \quad  \forall N \geq 0, \;  \norm{N} \leq \delta.
\]

$\Hp{\varphi}3)$ $\varphi$ is a solution to \eqref{Eq1}.

$\Hp{\varphi}4)$ $F\in C^1 (\Ud (\varphi))$ and 
\[
\norm{\nabla F}_{L^{\infty} (\Ud(\varphi))} \leq K, 
\]
where derivatives of $F$ are taken with respect to all variables $(M, p, z, x, t)$.
\medskip

$\Hp{\varphi}5)$ $\nabla_{M}F $ (derivatives of $F$ with respect to the matrix variables $M \in \Sym (n)$) have uniform continuity, i.e., there exists an increasing continuous function $\omega: [0, \infty) \rightarrow [0, \infty)$ such that $\omega (0) = 0$ and for every pair $(A, p,z,x, t),(B,p,z,x,t ) \in \Ud (\varphi)$,
\[
\norm{\nabla_{M} F (A,p,z,x, t) - \nabla_{M} F(B,p,z,x,t)} \leq \omega (\norm{A - B}).
\]

Note that we do not require any information about $F$ outside $\Ud (\varphi)$. A large class of operators satisfy the above conditions, for example, the real and complex Monge-Amp\`ere operators.

All solutions mentioned in this paper are understood in the viscosity sense (see \cite{Wang1}). From now on, we refer to positive constants that depend only on ($n, \lambda, \Lambda$) as universal constants and positive constants that depend only on $n$ as dimensional constants. We shall label the dependence explicitly if a constant depends on other parameters $(K, \delta, \omega$).  

Next we recall the standard conventions (see \cite{Lieberman}). A function $u$ defined on $Q_1$ is said to be $C^{k, \alpha}$ with $k$ being even if 
\[
\norm{u}_{C^{k,\alpha}} := \sum_{j=1}^{k/2-1} \sum_{i=1}^{k-1} \norm{D^{i} D^j_{t}u}_{L^{\infty}} + \norm{D^k u}_{C^{\alpha}} + \norm{D^{k/2}_t u}_{C^{\alpha/2}} < \infty.
\]

Our main result is the following theorem.
\begin{thm}
\label{Main}
Suppose that $F$ satisfies $\Hp{\varphi}1)- \Hp{\varphi}5)$ with $\varphi \equiv 0$. Then for each $\alpha \in (0,1)$,  there exist positive constants $(\mu_0 , C_0)$ only depending on $(n, \lambda, \Lambda, K, \delta, \omega, \alpha)$ such that the following statement holds:

If $u$ is a solution to \eqref{Eq1} in $Q_1$ and satisfies that
\[
\norm{u}_{L^{\infty} (Q_1)} \leq \mu_0,
\]
then $u \in C^{2,\alpha} (Q_{1/2})$ and
\[
\norm{ u}_{C^{2} (Q_{1/2})} \leq \delta,  \quad   \norm{u}_{C^{2,\alpha} (Q_{1/2})} \leq C_0.
\]
\end{thm}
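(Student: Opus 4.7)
The strategy is to adapt Savin's compactness-plus-iteration scheme of \cite{Savin2} to the parabolic setting. The core engine is an \emph{approximation lemma}: for every $\varepsilon>0$ there exists $\mu=\mu(\varepsilon,n,\lambda,\Lambda,K,\omega)>0$ such that if $u$ is a viscosity solution to \eqref{Eq1} with $\|u\|_{L^\infty(Q_1)}\le\mu$, then $u/\|u\|_{L^\infty(Q_1)}$ is within $\varepsilon$ in $L^\infty(Q_{1/2})$ of a solution $h$ of the linear parabolic equation
\[
a_{ij}(x,t)\,D_{ij}h + b_i(x,t)\,D_ih + c(x,t)\,h - h_t = 0,
\]
whose coefficients $(a_{ij},b_i,c)$ are the partial derivatives of $F$ with respect to $(M,p,z)$ evaluated at $(0,0,0,x,t)$. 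The proof is by contradiction: a violating sequence $u_k$ with $\mu_k:=\|u_k\|_{L^\infty(Q_1)}\to 0$ produces $v_k:=u_k/\mu_k$ bounded, satisfying a difference equation with bounded measurable, uniformly elliptic coefficients thanks to $\Hp{\varphi}1)$, $\Hp{\varphi}2)$, $\Hp{\varphi}4)$ (the arguments of $F$ stay inside $\Ud(0)$ because $u_k\to 0$ in the viscosity-test sense and $F(0,0,0,x,t)=0$ by $\Hp{\varphi}3)$ with $\varphi\equiv 0$). Parabolic Krylov--Safonov yields a locally $C^\alpha$-convergent subsequence, and $\Hp{\varphi}5)$ together with stability of viscosity solutions identifies the limit as a solution of the linearized equation.

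Next, since the coefficients $a_{ij},b_i,c$ are Lipschitz in $(x,t)$ by $\Hp{\varphi}4)$, parabolic Schauder theory provides interior $C^{2,\alpha}$ estimates for $h$; in particular its parabolic Taylor polynomial
\[
P(x,t) = h(0) + Dh(0)\cdot x + \tfrac12 x^\top D^2h(0)\,x + h_t(0)\,t
\]
satisfies $\|h-P\|_{L^\infty(Q_r)}\le C\,r^{2+\alpha}$. Choosing $r$ small depending on $\alpha$ and then $\mu_0$ small enough, the approximation lemma produces a quadratic parabolic polynomial $P_1$ with coefficients bounded by $C\mu_0$ such that $\|u-P_1\|_{L^\infty(Q_r)}\le\tfrac12\mu_0 r^{2+\alpha}$. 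Now iterate: set $u_1(x,t):=r^{-(2+\alpha)}\bigl[u(rx,r^2t)-P_1(rx,r^2t)\bigr]$; this is a viscosity solution of a parabolic equation $\tilde F[u_1]-\partial_t u_1=0$, where $\tilde F$ is obtained by translating the argument of $F$ along the graph of $P_1$ and rescaling parabolically. One checks that $\tilde F$ still satisfies $\Hp{\varphi}1)$--$\Hp{\varphi}5)$ with the same universal constants, provided the accumulated $C^2$-norm of the polynomial remains in $\Ud(0)$. Since $\|u_1\|_{L^\infty(Q_1)}\le\tfrac12\mu_0$, the approximation lemma reapplies, and induction produces parabolic quadratic polynomials $P_k$ with $\|u-P_k\|_{L^\infty(Q_{r^k})}\le\mu_0 r^{k(2+\alpha)}$ and geometrically summable increments $P_{k+1}-P_k$.

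The \textbf{main obstacle} is precisely this last step: $\mu_0$ must be chosen small relative to $\delta$ so that the accumulated coefficient norms $\|D^2P_k\|$, $|DP_k|$, $|P_k|$ stay below $\delta$ for every $k$, preserving the admissibility of $\tilde F$ throughout the iteration. Once the iteration closes, passing to the limit yields a pointwise parabolic second-order expansion of $u$ at the origin with $C^{2,\alpha}$ error and Hessian bound at most $C\mu_0\le\delta$. Since the smallness hypothesis $\|u\|_{L^\infty}\le\mu_0$ is preserved under parabolic translations into $Q_{1/2}$, the same expansion holds at every point of $Q_{1/2}$ with uniform constants, and the standard characterization of parabolic $C^{2,\alpha}$ regularity by pointwise quadratic approximations delivers both $\|u\|_{C^2(Q_{1/2})}\le\delta$ and $\|u\|_{C^{2,\alpha}(Q_{1/2})}\le C_0$.
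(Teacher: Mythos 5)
Your proposal reproduces the outer shell of the paper's argument (improvement of quadratic approximation at a fixed geometric scale, iteration with coefficient bookkeeping, and the pointwise characterization of parabolic $C^{2,\alpha}$), but the engine you put inside it fails. Your approximation lemma rests on applying parabolic Krylov--Safonov to $v_k=u_k/\mu_k$, justified by the parenthetical claim that the arguments of $F$ stay inside $\Ud(0)$ because $u_k\to 0$ ``in the viscosity-test sense.'' That is precisely what is not available: the hypotheses give no information whatsoever about $F$ outside $\Ud(0)$ --- no ellipticity, not even continuity --- and smallness of $\norm{u}_{L^{\infty}}$ does not restrict the Hessians of the test functions touching a viscosity solution; a Hessian bound of order $\delta$ is part of the \emph{conclusion} of Thm.\ref{Main}, not an a priori fact. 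Consequently $v_k$ does not verify the Pucci extremal inequalities needed for Krylov--Safonov, equicontinuity of the sequence is not known, and the compactness step collapses. This is exactly the difficulty the paper's Sections 2--4 are built to circumvent: one only touches $u$ with paraboloids of small opening $a\le c_0\delta$, so that at contact points Lem.\ref{LB} forces $\norm{D^2u}\le\delta$ and only the local ellipticity $\Hp{0}2)$ is ever used; the parabolic ABP estimate (Lem.\ref{ABP1}--\ref{ABP2}), the homogeneity of contact sets with respect to parabolic balls (Prop.\ref{HoC}) and the Vitali covering (Lem.\ref{VC}) then yield the oscillation decay Prop.\ref{OD} and Cor.\ref{scaledOD}, which is the actual substitute for Krylov--Safonov in the improvement-of-flatness step (Prop.\ref{IQA}). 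Without this (or an equivalent mechanism that uses only ellipticity in $\Ud(0)$), your first lemma is unproved and the rest of the scheme has nothing to run on.

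Two secondary points. First, you invoke parabolic Schauder theory for the linearized equation on the grounds that its coefficients $a_{ij}(x,t)=F_{M_{ij}}(0,0,0,x,t)$ are Lipschitz in $(x,t)$ by $\Hp{0}4)$; but $\Hp{0}4)$ only bounds $\nabla F$ (so $F$ is Lipschitz), and $\Hp{0}5)$ only gives a modulus of continuity of $\nabla_M F$ in the $M$ variable, so no Hölder regularity of the coefficients is available. The paper sidesteps this by comparing, after rescaling, with the constant-coefficient problem $\tr[DF(M)D^2h]-h_t=0$ and absorbing the variable-coefficient and nonlinearity errors through the modulus $\omega$ and the factors $rK$, $r^2K$ (see \S 5--\S 6). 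Second, the coefficient bookkeeping you flag as the ``main obstacle'' is real but routine once Prop.\ref{IQA} is in hand (the paper closes it by taking $\hat r_0\le C^{-2}\delta^2$ and $\mu_0=\hat r_0^{2+1/2}$); the genuine obstacle is the missing oscillation-decay estimate described above.
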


The above theorem is parallel to the main result in \cite{Savin2} with a slight refinement on the structure condition of $F$ ($D^2 F$ is required to be bounded in \cite{Savin2}). Such a refinement in the elliptic case has been pointed out in \cite{Armstrong1}. As an immediate consequence of Thm.\ref{Main}, we have

\begin{cor}
\label{cmain}
Let $\alpha \in  (0,1)$ and $\varphi \in C^{3, \alpha} (Q_1)$. Suppose that $F$ satisfies $\Hp{\varphi}1)- \Hp{\varphi}5)$. Then there exist positive constants $(\mu_1, C_1)$ only depending on $(n,\lambda, \Lambda, K, \delta, \omega, \alpha, \norm{\varphi}_{C^{3,\alpha} (Q_1)} )$ such that the following statement holds: 

If $u$ is a solution to Eq.\eqref{Eq1} in $Q_1$ and satisfies that
\[
\norm{u - \varphi}_{L^{\infty} (Q_1)} \leq \mu_1,
\]
then $u \in C^{2,\alpha} (Q_{1/2})$ and
\[
\norm{u - \varphi}_{C^{2} (Q_{1/2})} \leq \delta, \quad \norm{u - \varphi}_{C^{2,\alpha} (Q_{1/2})} \leq C_1.
\]
\end{cor}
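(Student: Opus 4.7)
The plan is to deduce the corollary from Theorem~\ref{Main} by subtracting off the background solution. Set $v:=u-\varphi$, and introduce the shifted operator
\[
\tilde F(M,p,z,x,t):=F\bigl(M+D^2\varphi(x,t),\,p+D\varphi(x,t),\,z+\varphi(x,t),\,x,t\bigr)-\varphi_t(x,t).
\]
A direct substitution shows that $u$ solves \eqref{Eq1} if and only if $v$ solves $\tilde F[v]-v_t=0$. Since $\|v\|_{L^{\infty}(Q_1)}=\|u-\varphi\|_{L^{\infty}(Q_1)}$, applying Theorem~\ref{Main} to $\tilde F$ with background $0$ and to the solution $v$ will yield exactly the conclusion of the corollary, provided $\tilde F$ satisfies $\Hp{0}1)$--$\Hp{0}5)$ with constants depending on $(n,\lambda,\Lambda,K,\delta,\omega,\alpha,\|\varphi\|_{C^{3,\alpha}})$.

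\textbf{Verification of the hypotheses for $\tilde F$.} The key observation is that the affine shift $(M,p,z,x,t)\mapsto(M+D^2\varphi(x,t),p+D\varphi(x,t),z+\varphi(x,t),x,t)$ carries $\Ud(0)$ for $\tilde F$ bijectively onto $\Ud(\varphi)$ for $F$. Hence $\Hp{0}1)$ and $\Hp{0}2)$ follow immediately with the same $\lambda,\Lambda,\delta$. Condition $\Hp{0}3)$ amounts to $\tilde F[0]\equiv 0$, which is precisely $F[\varphi]-\varphi_t=0$, i.e.\ the fact that $\varphi$ solves \eqref{Eq1}. Condition $\Hp{0}5)$ is inherited with the same modulus $\omega$, because $\nabla_M\tilde F(A,p,z,x,t)-\nabla_M\tilde F(B,p,z,x,t)$ equals the corresponding difference for $F$ evaluated at $(A+D^2\varphi,\ldots)$ and $(B+D^2\varphi,\ldots)$, which lie in $\Ud(\varphi)$. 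The only nontrivial item is $\Hp{0}4)$: differentiating $\tilde F$ with respect to $(x,t)$ produces, via the chain rule, terms of the schematic form
\[
\nabla_M F\cdot D^2 D_{x,t}\varphi+\nabla_p F\cdot D D_{x,t}\varphi+\partial_z F\cdot D_{x,t}\varphi+\nabla_{x,t}F-D_{x,t}\varphi_t,
\]
which I control in $L^{\infty}$ in terms of $K$ and of parabolic derivatives of $\varphi$ up to order three (measured in the parabolic scaling where $\partial_t$ counts twice), yielding a new constant $\tilde K=\tilde K(K,\|\varphi\|_{C^{3,\alpha}})$.

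\textbf{Conclusion and main obstacle.} With $\Hp{0}1)$--$\Hp{0}5)$ verified for $\tilde F$, Theorem~\ref{Main} applied to $v$ with parameters $(\lambda,\Lambda,\tilde K,\delta,\omega,\alpha)$ supplies constants $\mu_0,C_0$ for which $\|v\|_{L^{\infty}(Q_1)}\le\mu_0$ implies $\|v\|_{C^2(Q_{1/2})}\le\delta$ and $\|v\|_{C^{2,\alpha}(Q_{1/2})}\le C_0$; setting $\mu_1:=\mu_0$, $C_1:=C_0$ finishes the proof. The main obstacle is purely bookkeeping for $\Hp{0}4)$: I must ensure that all parabolic derivatives of $\varphi$ appearing in $\nabla_{x,t}\tilde F$ are available from the $C^{3,\alpha}$-hypothesis. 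Terms involving $\partial_t D^2\varphi$ or $\varphi_{tt}$, which are not directly in the $C^{3,\alpha}$ norm, are handled by differentiating the identity $\varphi_t=F[\varphi]$ and using $F\in C^1(\Ud(\varphi))$ to re-express them in terms of spatial derivatives of $\varphi$ of order at most three, so that everything is controlled by $\|\varphi\|_{C^{3,\alpha}}$ and $K$.
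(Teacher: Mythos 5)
Your reduction is exactly the paper's: the paper proves the corollary in one line by applying Thm.\ref{Main} to $w=u-\varphi$ and the shifted operator $G(M,p,z,x,t)=F(D^2\varphi+M,D\varphi+p,\varphi+z,x,t)-F[\varphi]$, which coincides with your $\tilde F$ since $F[\varphi]=\varphi_t$ by $\Hp{\varphi}3)$; your verification of $\Hp{0}1)$, $\Hp{0}2)$, $\Hp{0}3)$, $\Hp{0}5)$ is correct and is what the paper leaves implicit.

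One caveat on your treatment of $\Hp{0}4)$: the claim that $\partial_t D^2\varphi$ (equivalently $D^2\varphi_t$) and $\varphi_{tt}$ can be recovered ``by differentiating $\varphi_t=F[\varphi]$'' does not work as stated --- one spatial differentiation of $F[\varphi]$ only yields $D\varphi_t$ in terms of $D^3\varphi$ and $\nabla F$, while $D^2\varphi_t$ (and hence $\varphi_{tt}=\partial_t F[\varphi]$, which contains $\nabla_M F\cdot\partial_t D^2\varphi$) would require a second differentiation, hence $D^2F$ and $D^4\varphi$, neither of which is available under $F\in C^1(\mathcal{U}_{\delta}(\varphi))$ and $\varphi\in C^{3,\alpha}$. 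This is harmless if, as the paper's one-line proof implicitly assumes, $\norm{\varphi}_{C^{3,\alpha}(Q_1)}$ is read as controlling all space-time derivatives of total order at most three (so $\partial_t D^2\varphi$ and $\varphi_{tt}$ are bounded directly and no trick is needed); under a strictly parabolic reading of $C^{3,\alpha}$ the bound on $\partial_t\tilde F$ is genuinely not a consequence of the stated hypotheses, so you should either adopt the former reading or note that only the spatial and suitably rescaled time increments of $\tilde F$ are actually used in the proof of Thm.\ref{Main}.
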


The dependency of $(\mu_{1}, C_{1})$ on $\norm{\varphi}_{C^3, \alpha}$ can be effectively reduced according to the specific structure of $F$. For example, if $F = F(M,x )$, only $\norm{D^3 \varphi}_{L^{\infty}}$ instead of $\norm{\varphi}_{C^{3,\alpha}}$ will enter into the constant dependency in Cor.\ref{cmain}.

Along the proof of Thm.\ref{Main} we also produce some other important results. In particular, we establish the oscillation decay property for the solutions to Eq.\eqref{Eq1} in the case that $F : \Sym (n) \rightarrow \R$ only satisfies $\Hp{0}1)-\Hp{0}3)$ (see {Prop.\ref{OD}). 

\medskip

We follow closely the method in \cite{Savin2}. Recall that the main ingredient of the proof there is to establish certain homogeneity of contact sets with respect to concentrated balls (Lem.2.2 in \cite{Savin2}). The key step in this paper is to establish a parabolic analogue of this homogeneity. From this we deduce the oscillation decay property of solutions (Prop.\ref{OD}), based on which we may perform a blow-up argument to obtain $C^{2,\alpha}$-regularity. 

Our study here also shares many similar ideas to \cite{Wang1}, \cite{Wang2} and \cite{Wang3} which dates back to \cite{Caffarelli2} and \cite{Krylov}. However, our local analysis differs from \cite{Wang1} in techniques. Rather than working with cubes in $\R^n$ and parabolic cylinders, we perform our analysis over a class of special domains - parabolic balls (Defn.\ref{PB}). A parabolic ball can be viewed as the union of parabolic cylinders with all scales. Indeed, even not used explicitly, it has been pointed out in \cite{Wang1} that one should view parabolic cylinders in a scaled fashion (P. 30 in \cite{Wang1}). This point of view should correspond to the notion of parabolic balls used here. Although we cannot find explicit reference regarding parabolic balls in literature, we believe that similar notions have been considered by many authors.

In order to present the main idea in a transparent fashion, we shall first discuss the case that $F$ only depends on $M \in \Sym (n)$, i.e., $F$ is of the form
\begin{equation}
\label{Simple}
F : \Sym (n) \rightarrow \R.
\end{equation}
Then in a seperate section (\S 6), we will explain how the proof  can be adapted to establish Thm.\ref{Main} in the general case.

\medskip

The paper is organized as follows: In \S2 we study the basic properties of parabolic balls and contact sets. In \S3 the homogeneity of contact sets with respect to parabolic balls is proved. In \S4 we establish the oscillation decay property of solutions. \S5 is devoted to the proof of Thm.\ref{Main} under the assumption that $F$ is of the form \eqref{Simple}. In \S6 we explain how to modify the proof in \S5 to establish Thm.\ref{Main} for general setting and give the proof of Cor.\ref{cmain}.

\section{Parabolic Balls and Contact Sets}
In this section we introduce the notion of parabolic balls and parabolic contact sets and list their basic properties. All distances and measures are taken to be the standard $(n+1)$-dimensional Euclidean distances and Lebesgue measures.

\begin{defn}
\label{PB}
We define parabolic balls of opening $\theta >0 $ to be domains of the following forms:
\begin{equation}
\begin{split}
& \PB_{T}^{\theta} (x_0, t_0) := \bigp{ (x, t) \; | \;  \theta \abs{x - x_0}^2 \leq t-t_0 \leq T} ,  \\
&  \PB_{T}^{-\theta} (x_0, t_0) := \bigp{ (x, t) \; | \; \theta \abs{x - x_0}^2\leq t_0-t \leq T}.
\end{split}
\end{equation}
\end{defn}

By direct calculation, 
\begin{equation}
\label{VPB}
\abs{\PB_{T}^{\theta} (x,t)} = \abs{\PB_{T}^{-\theta} (x,t)} = \tfrac{2\omega_n}{n+2} T^{1 + n/2} \theta^{-n/2}
\end{equation}
where $\omega_{n}$ is the volume of the unit ball in $\R^n$.

\smallskip

Now we study some intersection properties of parabolic balls. Although these properties are established via elementary arguments, they seems not standard in the literature. Thus we shall provide more details in the proof.

Recall the standard notation of parabolic cylinders $
Q_{r} (x, t) := B_{r} (x) \times (t- r_0^2, t ] $ and $Q_{r} = Q_{r} (0, 0)$.

\begin{lem}
\label{intersection1}
Let $\theta \geq \frac{3}{4}$. For every $(x_1, t_1) \in \PB_{T_0}^{-\theta} (x_0, t_0)$ and $0<T \leq t_0 -t_1$, there exists a cylinder $Q_{r} (x_2, t_2)$ such that:

i) $Q_{r} (x_2, t_2) \subset \PB_{T}^{\theta} (x_1, t_1 ) \cap  \PB_{T_0}^{-\theta} (x_0, t_0) \cap \{(x, t) : t_1 + T/4\leq t \leq t_1 + T/2 \}$;

ii) $\abs{Q_r (x_2, t_2)} / \abs{\PB_{T}^{\theta} (x_1, t_1 )} \geq   \eta_{0}$, where $\eta_0$ only depends on the dimension.

iii) If $(y_0,s_0) \in Q_{r/4} (x_2, t_2)$, then $
\PB_{s_0-s}^{1/2} (y,s) \subset Q_{r} (x_2, t_2)$ for every $(y,s) \in \PB^{-1/2}_{r^2/16} (y_0, s_0 - \tfrac{r^2}{16})$.
\end{lem}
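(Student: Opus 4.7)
The plan is to construct the cylinder $Q_r(x_2, t_2)$ by hand and then verify each of (i)--(iii) through explicit geometric estimates. After translating so that $(x_1, t_1) = (0, 0)$, write $\tau := t_0$ and $a := |x_0|$, so the hypotheses become $\theta a^2 \leq \tau$ and $T \leq \tau$. I will take $t_2 := T/2$, $r := c_0 \sqrt{T/\theta}$ for a sufficiently small dimensional constant $c_0 > 0$, and $x_2$ on the segment from $0$ to $x_0$ at a position $\beta \in [0, a]$ chosen to balance the constraints imposed by the forward and backward parabolic balls.

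For part (i), containment in $\PB_T^{\theta}(0, 0)$ reduces to the pointwise inequality $\theta |x|^2 \leq t$ on $Q_r(x_2, t_2)$; the worst case gives $\theta(\beta + r)^2 \leq T/2 - r^2$. Containment in $\PB_{T_0}^{-\theta}(x_0, \tau)$ similarly reduces to $\theta(a - \beta + r)^2 \leq \tau - T/2$. A valid $\beta$ exists provided the two square-root upper bounds sum to at least $a + 2r$, i.e.
\[
\sqrt{(T/2 - r^2)/\theta} + \sqrt{(\tau - T/2)/\theta} \;\geq\; a + 2r.
\]
This follows from $a \leq \sqrt{\tau/\theta}$, $T \leq \tau$, and the slack provided by $\theta \geq 3/4$, once $c_0$ is chosen small enough. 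The time-slab constraint reduces to $r^2 \leq T/4$, which is automatic for small $c_0$.

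Part (ii) is a direct volume computation: with $|Q_r(x_2, t_2)| = \omega_n r^{n+2}$ and the formula \eqref{VPB}, the ratio equals $\tfrac{(n+2) c_0^{n+2}}{2\theta}$, a positive constant depending on $n$ and $\theta$. For part (iii), I unpack the parabolic ball conditions: any $(y, s) \in \PB^{-1/2}_{r^2/16}(y_0, s_0 - r^2/16)$ satisfies $|y - y_0| \leq r/(2\sqrt{2})$ and $s \in [s_0 - r^2/8, s_0 - r^2/16]$; any $(z, \sigma) \in \PB^{1/2}_{s_0 - s}(y, s)$ satisfies $|z - y| \leq \sqrt{2(s_0 - s)} \leq r/2$ and $\sigma \in [s, s_0]$. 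Combined with the constraints $|y_0 - x_2| \leq r/4$ and $s_0 \in (t_2 - r^2/16, t_2]$ coming from $(y_0, s_0) \in Q_{r/4}(x_2, t_2)$, triangle inequalities then yield $|z - x_2| \leq r$ and $\sigma \in (t_2 - r^2, t_2]$, placing $(z, \sigma) \in Q_r(x_2, t_2)$.

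The main obstacle will be the construction in part (i): moving $x_2$ toward $x_0$ helps the backward inclusion but hurts the forward one, so the two constraints are in direct tension. The delicate case is when $(x_1, t_1)$ lies on the boundary of $\PB_{T_0}^{-\theta}(x_0, t_0)$, i.e.\ $\theta a^2 = \tau$, where both constraints are tight simultaneously; the hypothesis $\theta \geq 3/4$ together with $T \leq \tau$ provides precisely the slack needed for the construction to yield a dimensional constant $c_0$.
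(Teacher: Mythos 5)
Your treatment of parts (i) and (ii) is sound and is only a mild variant of the paper's construction: the paper inscribes the ball $B_{r_2}(x_2)$ of radius half the overlap of the two cross-sectional balls of $\PB_{T_0}^{-\theta}(x_0,t_0)$ and $\PB_T^{\theta}(x_1,t_1)$ at the time levels $t_1+T/2$ and $t_1+T/4$ and then sets $r=\min\{r_2,\sqrt T/2\}$, whereas you fix $r=c_0\sqrt{T/\theta}$ outright and slide the center along the segment; your solvability criterion $\sqrt{(T/2-r^2)/\theta}+\sqrt{(\tau-T/2)/\theta}\ge a+2r$ is the correct one and does follow from $a\le\sqrt{\tau/\theta}$, $T\le\tau$, $\theta\ge 3/4$ for a small absolute $c_0$ (indeed $\sqrt{\tau}-\sqrt{\tau-T/2}\le \sqrt T/(2+\sqrt2)$, so e.g.\ $c_0=1/8$ works). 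In (ii) your ratio $\tfrac{(n+2)c_0^{n+2}}{2\theta}$ depends on $\theta$, which does not literally match the claim that $\eta_0$ is dimensional; but the same is true of the paper's estimate (for a standard cylinder the ratio can never exceed $\tfrac{n+2}{2\theta}$), and in all applications $\theta\in[3/4,4]$, so this is a cosmetic mismatch rather than a defect of your argument.

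Part (iii), however, has a genuine gap: with your own (correct) intermediate bounds, the triangle inequality gives $|z-x_2|\le \tfrac r2+\tfrac{\sqrt2}{4}r+\tfrac r4=\tfrac{3+\sqrt2}{4}r\approx 1.10\,r$, not $\le r$, so your final step simply does not close arithmetically. Moreover the failure is not repairable for the statement as literally written: taking $|y_0-x_2|$ just below $r/4$, $s=s_0-r^2/8$, $y=y_0+\tfrac{\sqrt2}{4}r\,e$ and then $z=y+\tfrac r2 e$, $\sigma=s_0$, with $e$ the outward radial direction, produces a point of $\PB^{1/2}_{s_0-s}(y,s)$ whose distance to $x_2$ is about $\tfrac{3+\sqrt2}{4}r>r$, so the inclusion with height $s_0-s$ is false. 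What the paper's own proof actually establishes is the capped version: it fixes $(\xi,\tau)\in\PB^{1/2}_{r^2/16}(y,s)$ rather than $\PB^{1/2}_{s_0-s}(y,s)$, which forces $|\xi-y|\le\tfrac{\sqrt2}{4}r$ and makes the sum $\tfrac{\sqrt2}{4}r+\tfrac{\sqrt2}{4}r+\tfrac r4=\tfrac{1+2\sqrt2}{4}r<r$. So to salvage (iii) you must weaken the forward ball to $\PB^{1/2}_{r^2/16}(y,s)$ (or otherwise cap its height at $r^2/16$); as proposed, the step ``triangle inequalities then yield $|z-x_2|\le r$'' is exactly where the argument fails.
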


\begin{proof}
For simplicity of notations, set $\PB_{0}^- := \PB_{T_0}^{-\theta} (x_0, t_0)$ and $\PB_{1}:= \PB_{T}^{\theta} (x_1, t_1)$. By scaling, we may assume $t_0 - t_1 =1$. 

\begin{center}
\includegraphics[width=9cm, height=5cm]{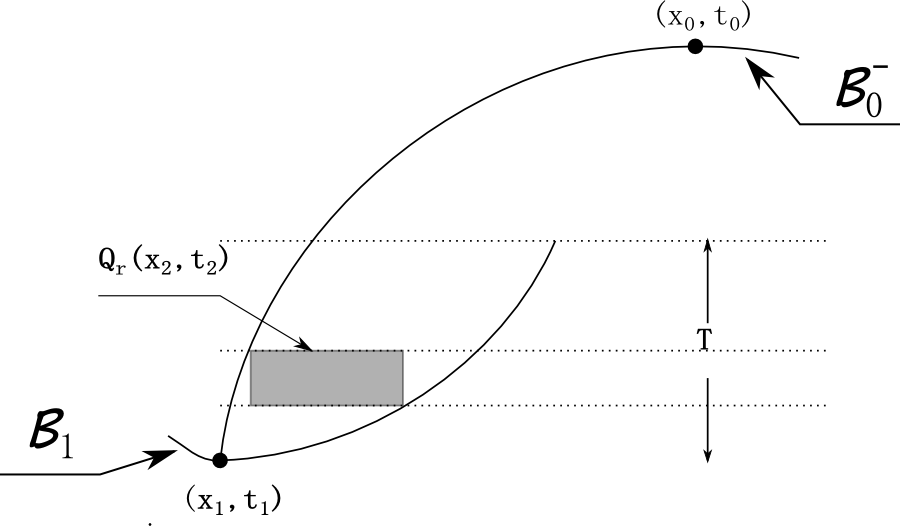}

Figure 2.1
\end{center}

Set $r_0 = \sqrt{\tfrac{(1- T/2)}{\theta}}, r_1= \sqrt{\tfrac{T}{4\theta}}$, $r_2=(r_0 +r_1-\abs{x_1-x_0})/2$ and
\[
x_2 := (1-\delta )x_0 + \delta  x_1, \quad  \delta = (r_0 -r_2)/\abs{x_1 - x_0}. 
\]
We shall show that
\[
Q_{r} (x_2, t_2), \quad  t_2 =  t_1 + T/2, r = \min\{r_2, \sqrt{T}/2\}
\]
satisfies all desired properties. 

\smallskip

To show i), let $\mathrm{Proj} : \R^{n+1} \rightarrow \R^n$ be the projection given by $(x, t) \mapsto x$ and
\[
E:= \mathrm{Proj} ( \PB_0^- \cap \{(y,s) : s = t_1 + T/2\}) \cap \mathrm{Proj} ( \PB_1 \cap \{(y,s) : s = t_1 + T/4\}).
\]
It is easy to see that
\[
B_{r} (x_3 ) \subset E = B_{ r_1} (x_1) \cap B_{r_0} (x_0) \subset \R^n, 
\]

Now, by the definition of parabolic balls, 
\[
\mathrm{Proj} ( \PB_0^- \cap \{(y,s) : s = t_1 + T/2\}) \subset \mathrm{Proj} (\PB_0^- \cap \{(y,s) : s =t\} )  , \quad \forall t < t_1+T/2
\]
and
\[
\mathrm{Proj} ( \PB_1 \cap \{(y,s) : s = t_1 + T/4\} )  \subset \mathrm{Proj} (\PB_1 \cap \{(y,s) : s =t\} ) , \quad \forall t > t_1 +T/4.
\]

Thus
\[
B_{r}(x_3) \subset \mathrm{Proj} (\PB_0^- \cap \PB_1 \cap \{(y,s) : s =t\} ) , \quad \forall t \in [t_1 +T/4, t_1 +T/2].
\]
Along with the definition of $r$, we have,
\[
Q_{r} (x_2, t_2) \subset \PB_{T}^{\theta} (x_1, t_1 ) \cap  \PB_{T_0}^{-\theta} (x_0, t_0) \cap \{(x, t) : t_1 + T/4\leq t \leq t_1 + T/2 \}.
\]
This proves i).

To show ii), by direct calculation, we have
\begin{equation}
\label{Lem22r}
 r  \geq  \tfrac{\sqrt{2}-1}{4} \sqrt{\tfrac{T}{\theta}}   , \quad \forall T\leq1= t_0 - t_1, .
\end{equation}
By Eq.\eqref{VPB}, Eq.\eqref{Lem22r} and assumption that $\theta \geq 3/4$ , we have
\[
\abs{Q_{r} (x_2, t_2)} \geq (\tfrac{\sqrt{2} -1}{4})^{-(n+1)} T^{1+n/2} \theta^{1-n/2} \geq  \eta_0 \abs{\PB_{T}^{\theta} (x_1, t_1)}.
\]
This proves ii). 

To show iii), fix a $(\xi, \tau) \in \PB^{1/2}_{r^2/16} (y ,s )$, we have $s_{0}-r^2/8 \leq \tau \leq s_0 \leq t_2 $ and
\[
\abs{\xi - x_2} \leq \abs{\xi - y} + \abs{y - y_0} + \abs{y_0 - x_2}  \leq \tfrac{\sqrt{2}r }{4} + \tfrac{\sqrt{2} r}{4} + \tfrac{r}{4} \leq r.
\] 
This proves iii).
\end{proof}

\smallskip

\begin{lem}
\label{intersection2}
Let $(x_0, t_0) $ and $(x_1, t_1) $ be two points in $\overline{Q}_{11/24} $. For $i=0,1$, set 
\[
\theta_i := 
\min\{\theta \; |\;  \PB_{1+ t_i}^{-\theta} (x_i, t_i) \subset \overline{Q}_1\},
\]
and $
\PB_{i}^{-} := \PB^{\theta_i}_{1+t_i} (x_i, t_i).$
Then there exists a dimensional constant $\eta_1$ such that
\[
\tfrac{\abs{\mathcal{E}} }{\abs{\PB_0^-} } , \tfrac{\abs{ \mathcal{E}  }}{\abs{\PB^-_1}} \geq \eta_1,  \text{ where } \mathcal{E} = \PB_0^- \cap \PB_1^-.
\]
\end{lem}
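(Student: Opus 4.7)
My plan is to exhibit a parabolic cylinder of dimensional size contained in $\mathcal{E}$, then compare its volume against the explicit formula for $|\PB_i^-|$.

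First, I would determine $\theta_i$ in closed form. A backward parabolic ball $\PB_{1+t_i}^{-\theta}(x_i,t_i)$ has its widest horizontal cross-section at the bottom time $t = -1$, namely $B_{\sqrt{(1+t_i)/\theta}}(x_i)$; requiring this to lie inside $\overline{B}_1$ gives $\theta_i = (1+t_i)/(1-|x_i|)^2$. Substituting into the volume formula \eqref{VPB} yields $|\PB_i^-| = \tfrac{2\omega_n}{n+2}(1+t_i)(1-|x_i|)^n$, which is bounded above by a dimensional constant and below by a dimensional constant since $(x_i,t_i) \in \overline{Q}_{11/24}$.

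Next, I would find a parabolic cylinder $Q_c(0, t^*) \subset \mathcal{E}$ with $c>0$ dimensional. The natural choice of axis is $x = 0$: at the bottom time $t = -1$ the cross-section of $\PB_i^-$ equals $B_{1-|x_i|}(x_i)$, and since $|x_i| \leq 11/24$ this contains $B_{1/12}(0)$. For $(x,t) \in B_c(0) \times (t^*-c^2, t^*]$, the triangle inequality $|x - x_i| \leq c + |x_i|$ combined with the extremal time $t = t^*$ reduces the containment $Q_c(0,t^*) \subset \PB_i^-$ to the single scalar inequality $\theta_i(c + |x_i|)^2 \leq t_i - t^*$. Together with the constraint $t^* - c^2 \geq -1$, this produces a condition on $c$ alone which, after using $|x_i| \leq 11/24$ and $1 + t_i \geq 1 - (11/24)^2$, holds with room to spare for any sufficiently small dimensional $c$ (for instance $c = 1/24$ with $t^*$ close to $-1 + c^2$).

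Combining the two steps, $|\mathcal{E}| \geq |Q_c| = \omega_n c^{n+2}$ is bounded below by a dimensional constant, while $|\PB_i^-|$ is bounded above by one, yielding the desired uniform ratio $\eta_1$. The geometric content is simply that two backward parabolic balls anchored at interior points of $Q_{11/24}$ and both tangent to $\partial Q_1$ along $\{t = -1\}$ must share a fat core near $(0, -1)$; the main nuisance, rather than any genuine obstacle, is verifying the scalar inequality uniformly over $(x_i,t_i) \in \overline{Q}_{11/24}$, which is routine once one fixes $c$ small enough to absorb the worst case $|x_i| = 11/24$.
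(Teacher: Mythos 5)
Your proof is correct and takes essentially the same approach as the paper's: both exhibit a set of fixed dimensional size near the bottom time $t=-1$ (a cylinder, resp.\ a box $B_{r/2}(x_2)\times[-1,-1+s]$) contained in both backward balls, and then compare with the volume formula \eqref{VPB}. The only real difference is cosmetic: the paper centers its box at a point of the lens $B_{r_0}(x_0)\cap B_{r_1}(x_1)$, whereas you center at the origin using $|x_i|\le 11/24<1/2$, which also yields the clean closed form $\theta_i=(1+t_i)/(1-|x_i|)^2$ and makes the uniform verification slightly more explicit.
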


\begin{center}
\includegraphics[width=6cm, height=3.5cm]{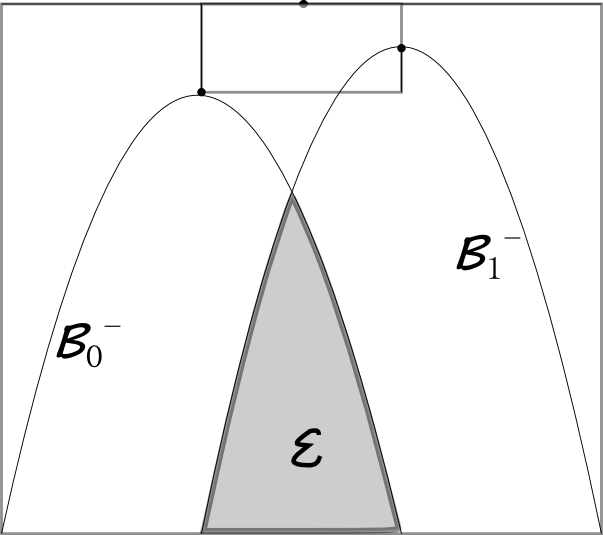}

Figure 2.2
\end{center}

\begin{proof}
Consider $\mathrm{Proj}  (\PB_{0}^- \cap \{t = -1\}) $ and $\mathrm{Proj} (\PB_{1}^- \cap \{t=-1\} )$. Set $r_i = 1- \abs{x_i}, i =0,1$ and
\[
x_2 := \frac{r_1 -r}{\abs{x_0 - x_1}} x_0 + \frac{r_0-r}{\abs{x_0 - x_1}} x_1, \quad  r=\frac{(  r_0 + r_1 - \abs{x_0-x_1} ) }{2}.
\]
Then
\[
B_{r} (x_2 ) \subset B_{r_0} (x_0) \cap B_{r_1} (x_1 ) \subset \R^n.
\]
For each $y \in B_{r/2} (x_2)$, let $s_i (y) > 0$ be the unique number such that $(y, -1 + s_i)   \in \partial \PB_i^-$ and let
\[
s  :=  \inf \{ \min \{s_0 (y), s_1 (y)\} \; |\;  y\in B_{r/2} (y) \}
\]
Clearly
\[
B_{r/2} (x_2 ) \times [-1, -1 +s] \subset \mathcal{E}.
\]
Since $(x_0 ,t_0), ( x_1,t_1) \in Q_{11/24}$, 
\[
r = 2 - (\abs{x_1} + \abs{x_0} +\abs{x_1 - x_0}) \geq 1/4.
\]
It then suffices to find a lower bound of $s$ that is independent of $(x_0,t_0)$ and $(x_1, t_1)$.

First, we estimate $s_0 (y)$ from below. By the definition of $\theta_0$ and $s_0(y)$, 
\[\begin{split}
s_0 (y) & = 1+t - \theta_0 \abs{y - x_0}^2 = (1 + t) \smp{1 - \tfrac{\abs{y -x_0}^2}{r_1^2}}.
\end{split}\]
By the choice of $x_2$ and the fact that $y \in B_{r/2} (x_2)$, we obtain
\[
\abs{y - x_0} \leq r_1 - r/2.
\]
Along with the fact that $\abs{t} \geq (11/24)^2$, we obtain
\[
s_0 (y)  \geq \frac{1}{2} \smp{ \frac{r}{r_1} + \frac{r^2}{ 4 r_1^2}} \geq \frac{1}{8}, \quad \forall y \in B_{r/2} (x_2 ).
\]
With the same procedures, one obtains $s_1 (y) \geq 1/8,  \forall y \in B_{r/2} (x_2 )$. This completes the proof.
\end{proof}

Next we establish a Vitali-type of covering lemma for parabolic balls. Consider the following construction.

Given $(x_1,t_1) \in \PB_0^- = \PB_{T_0}^{-\theta} (x_0, t_0) $ and a parabolic ball $\PB_1 := \PB_{T_1}^{\theta} (x_1, t_1)$. Let 
\begin{equation}
\label{hatPB}
\wh{\PB}_1 := \PB_{4T_1}^{\hat{\theta}} (x_1, t_1- 3T_1),  \quad   \hat{\theta} = \frac{\theta}{(\sqrt{2}+1)^2}.
\end{equation}
By Eq.\eqref{VPB},
\begin{equation}
\label{chat}
\eta_{2} := \abs{\PB_{T}^{\theta} (x, t)} /\abs{\wh{\PB}_{T}^{\theta} (x, t)}  = 4^{-(1+n/2)} (\sqrt{2} +1)^{-n}.
\end{equation}

\begin{lem}
\label{VC}
Let $D$ be a bounded subset of $\R^{n+1}$ and $T_{x,t} = T (x,t)$ be a positive function on $D$. Let 
\[
\mathfrak{F}:=\bigp{ \PB_{T_{x,t}}^{\theta} (x,t) \; |\;  (x,t) \in D }.
\]
Suppose that 
\[
 T_0 :=  \sup \{ T(x,t) \; | \;  (x, t) \in D \} <\infty.
\]
Then there exists a countable disjoint sub-collection $\{ \PB_i =\PB_{T_{x_i, t_i}}^{
\theta} (x_i, t_i) : i \in \N\}$ such that
\[
D \subset \bigcup_i \wh{\PB_i}  .
\]
\end{lem}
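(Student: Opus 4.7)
The plan is to adapt the standard dyadic greedy Vitali argument to the time-asymmetric scaling of parabolic balls. First, I partition $\mathfrak{F}$ into generations based on the parameter $T$: for $k = 0, 1, 2, \ldots$ set
\[
\mathfrak{F}_k := \bigp{ \PB_{T_{x,t}}^{\theta}(x,t) \in \mathfrak{F} \; | \; 2^{-k-1} T_0 < T_{x,t} \leq 2^{-k} T_0 }.
\]
Inductively I extract $\mathcal{G}_k \subset \mathfrak{F}_k$ as a maximal sub-collection whose elements are pairwise disjoint and disjoint from every ball in $\bigcup_{j < k} \mathcal{G}_j$ (existence via Zorn's lemma), and set $\mathcal{G} := \bigcup_{k \geq 0} \mathcal{G}_k$.

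Next I verify countability. Since $D$ is bounded and every $T_{x,t} \leq T_0$, all balls in $\mathfrak{F}$ lie in a fixed bounded region of $\R^{n+1}$, while \eqref{VPB} gives each ball in $\mathcal{G}_k$ a uniform positive lower bound on volume. Pairwise disjointness within $\mathcal{G}_k$ then forces $|\mathcal{G}_k| < \infty$, and hence $\mathcal{G}$ is countable. I can then enumerate $\mathcal{G} = \{\PB_i\}_{i \in \N}$.

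For the covering, fix $(x_a, t_a) \in D$ with parabolic ball $\PB_a \in \mathfrak{F}_k$. If $\PB_a \in \mathcal{G}_k$, the inclusion $(x_a, t_a) \in \wh{\PB_a}$ is immediate from the definition of $\wh{\PB_a}$. Otherwise, maximality of $\mathcal{G}_k$ produces some $\PB_i \in \mathcal{G}_j$ with $j \leq k$ and $\PB_a \cap \PB_i \neq \emptyset$, and the generation bounds give $T_a \leq 2^{-k} T_0 < 2 \cdot 2^{-j-1} T_0 < 2 T_i$. Fixing any $(y, s) \in \PB_a \cap \PB_i$, the triangle inequality gives
\[
|x_a - x_i| \leq |x_a - y| + |y - x_i| \leq \sqrt{T_a/\theta} + \sqrt{T_i/\theta} \leq (\sqrt{2} + 1)\sqrt{T_i/\theta},
\]
so $\hat{\theta} |x_a - x_i|^2 \leq T_i$. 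For the time direction, $t_a \leq s \leq t_i + T_i$ and $t_a \geq s - T_a \geq t_i - 2T_i$ yield $t_a - (t_i - 3T_i) \in [T_i, 4T_i]$. These two inequalities are precisely the membership condition $(x_a, t_a) \in \wh{\PB_i} = \PB_{4T_i}^{\hat{\theta}}(x_i, t_i - 3T_i)$.

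The main obstacle, and the reason for the particular shape of $\wh{\PB}$, is bookkeeping the time-asymmetry of a forward-opening parabolic ball: its distinguished point $(x_i, t_i)$ sits at the vertex rather than any center of mass, so the enlargement has to reach backward in time far enough to swallow the vertices of intersecting smaller balls while still being wide enough spatially. The backward shift $3T_i$ in the vertex and the spatial rescaling $(\sqrt{2} + 1)^2$ in $\hat{\theta}$ are calibrated precisely for the factor-$2$ dyadic step, as witnessed by the bounds $\sqrt{T_a} + \sqrt{T_i} \leq (\sqrt{2}+1)\sqrt{T_i}$ and $t_a \in [t_i - 2T_i, t_i + T_i]$ used above; once this calibration is verified, everything else in the argument is routine.
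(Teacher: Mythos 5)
Your proposal is correct and follows essentially the same argument as the paper: the same dyadic decomposition of $\mathfrak{F}$ by the scale $T$, the same greedy maximal disjoint selection across generations, the same volume-based countability argument, and the same triangle-inequality/time-interval verification that an intersecting smaller ball's vertex lands in the enlargement $\wh{\PB}$. (Your explicit check of the upper bound $t_a-(t_i-3T_i)\leq 4T_i$ is a small completeness bonus over the paper's write-up, which only records the lower bound.)
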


\begin{proof}
We shall mimic the standard proof of the Vitali's covering lemma (see \cite{EG} Page 27). Let
\[
\mathfrak{F}_{k}:=\{ B_{T}^{\theta} (x,t)  \in \mathfrak{F} \; |   \;   2^{-(k+1)} T_0 <T \leq  2^{-k} T_0  \}, \quad k \in \N.
\]
We define $\mathfrak{G}_{k} \subset \mathfrak{F}_k$ as follows:

a) Let $\mathfrak{G}_0$ be a maximal disjoint sub-collection of $\mathfrak{F}_0$.

b) Assuming $\mathfrak{G}_0, \mathfrak{G}_1, .., \mathfrak{G}_{k-1}$ have been selected, we choose $\mathfrak{G}_k$ to be a maximal disjoint sub-collection of 
\[
\bigp{\PB \in \mathfrak{F}_{k} \; | \; \PB \cap \PB' = \emptyset , \forall \PB' \in \bigcup_{j=0}^{k-1} \mathfrak{G}_j }.
\]

Finally, define $\mathfrak{G}:= \cup_{k=0}^{\infty} \mathfrak{G}_k$. Clearly $\mathfrak{G}$ is a collection of disjoint parabolic balls and $\mathfrak{G} \subset \mathfrak{F}$. 

First, we show that $\mathfrak{G}$ is countable. Since $(x,t) \in D$ and $T_0 < \infty$, $\cup_{\mathfrak{F}} \PB$ is a set with finite volume. Meanwhile, for each fixed $k$, 
\[
\abs{\PB} > 2^{-k(1+n)/2} \tfrac{2\omega_n}{n+2} T_0^{1 +n/2} \theta^{-n/2} , \quad \forall \PB \in \mathfrak{G}_k.
\] 
Therefore, $\mathfrak{G}_k$  must be a finite set, because it consists of disjoint parabolic balls. It follows that $\mathfrak{G}$ is countable. 

Next, we show $D \subset \cup_{\mathfrak{G}} \wh{\PB}$. Given any point $(x,t) \in D $, consider $\PB (x,t):= \PB_{T_{x,t}}^{\theta} (x,t) \in \mathfrak{F}$. Let $k$ be the unique integer such that $\PB (x,t) \in \mathfrak{F}_k$. If $\PB(x,t) \in \mathfrak{G}_k$, then clearly $(x,t) \in \cup_{\mathfrak{G}} \wh{\PB}$. If $\PB (x,t) \not \in \mathfrak{G}_k$, then by the maximality of $\mathfrak{G}_k$, there exists a $\PB(y,s) := \PB_{T_{y,s}}^{\theta} (y,s)\in \cup_{j=0}^{k} \mathfrak{G}_j$ such that $\PB(x,t) \cap \PB (y,s) \neq \emptyset $. 

\begin{center}
\includegraphics[width=7cm, height=2.5cm]{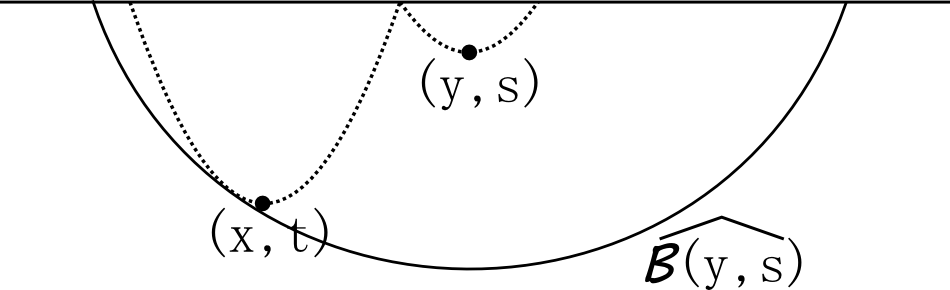}

Figure 2.3
\end{center}

Claim: $(x,t)  \in \widehat{\PB(y,s)}$. For simplicity of the notations, let $T = T_{y,s}$. By the construction, $T_{x,t} \leq 2 T$. Let $(y',s') \in \PB(x,t) \cap \PB (y,s)$. Then,
\[
\abs{x - y} \leq \abs{x - y'} + \abs{y'-y} \leq (\sqrt{2} + 1) \sqrt{T/\theta}
\]
and
\[
t -(s-3T) \geq s' - T_{x,t} -(s -3T) \geq T.
\]
Therefore,
\[
\hat{\theta} \abs{x - y}^2 \leq T \leq (t-(s-3T)),
\]
which implies that $(x,t) \in \wh{\PB(y,s) }$. This completes the proof.
\end{proof}

\medskip

Now we move to the discussion of contact sets. We begin with some terminologies. A function $P(x,t)$ on $\R^n \times \R$ is called a concave parabola of opening $a >0$ if it is of the form
\[
P_{(y,s; a)} (x, t) = -\frac{a}{2} \abs{x - y}^2  + a (t-s),
\]
for some point $(y,s) \in \R^{n} \times \R$; It is called a convex parabola if it is of the form
\[
P_{(y,s; a)} (x, t) = \frac{a}{2} \abs{x - y}^2  - a (t-s).
\]

Let $\varphi$ be a smooth function and $u \in C(\overline{B_1} \times \R)$. We say that $\varphi$ contacts $u$ at $(x,t)$ from below if 
\[
\varphi (\xi, \tau) < u (\xi , \tau) , \; \forall \xi \in B_1 , \tau < t
 \quad \text{ and } \quad
\varphi  (x, t) = u (x, t).
\]
Similarly, we say that $\varphi$ contacts $u$ at $(x,t)$ from above if
\[
\varphi (\xi, \tau) > u (\xi , \tau) , \; \forall \xi \in B_1 , \tau < t
 \quad \text{ and } \quad
\varphi  (x, t) = u (x, t).
\]
Note that we always compare the function $\varphi$ with $u$ in time $\tau \leq t$.

\medskip

\begin{defn}
Let $u \in C(\overline{B}_1 \times \R)$ be a bounded function and $E$ be a compact subset of $\R^{n}\times \R$. Given $a>0$, the contact set $A_{a}(E,u)$ is defined as follows: 
\[\begin{split}
A_{a} (E,u) := \{& (x, t)  \in \overline{B}_1 \times \R | \;  \exists (y,s) \in E \text{ s.t. } \\
& P_{(y,s ; a)} \text{ contacts } (u -\min_{Q_1}u ) \text{ from below at } (x,t)  \}.
\end{split}\] 
We shall write $A_{a} (E)$ if the indication of the function $u$ is clear.
\end{defn}

The following lemma is a summary of the basic properties of $A_a (E)$. The proof is straight forward, hence it is omitted.

\begin{lem}
\label{contact set}
Let $u \in C(\overline{B}_1 \times \R), a, b \in \R^{+}$ and $E, F$ be compact subsets of $\R^{n}\times \R$. Then

i) $A_a (E)$ is closed.

ii) If $E \subset F$, then $A_a (E) \subset A_a  (F)$.

iii) If $a \leq b$, then $\smp{ A_{a} (\overline{Q}_1)  \cap Q_1} \subset \smp{ A_{b} (\overline{Q}_1)   \cap Q_1} $.

\end{lem}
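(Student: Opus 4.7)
Part (ii) is immediate from the definitions: if $(y,s) \in E$ witnesses $(x,t) \in A_a(E)$, then $(y,s) \in E \subset F$ also witnesses $(x,t) \in A_a(F)$. For part (i), I would take a sequence $(x_k, t_k) \in A_a(E)$ converging to $(x_*, t_*)$ with associated witness vertices $(y_k, s_k) \in E$, extract a subsequence with $(y_k, s_k) \to (y_*, s_*) \in E$ using compactness, and observe that $P_{(y_k, s_k; a)} \to P_{(y_*, s_*; a)}$ locally uniformly. The contact identity and below-inequality pass to the limit, producing a parabola lying weakly below $u - \min_{Q_1} u$ on $B_1 \times \{\tau \leq t_*\}$ with equality at $(x_*, t_*)$; a standard first-contact-time reformulation then promotes this to the strict below-contact condition.

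The substantive part is (iii). Given a below-contact parabola $P_{(y, s; a)}$ at $(x, t) \in Q_1$ with $(y, s) \in \overline{Q}_1$ and $a \leq b$, I would produce a steeper witness by shifting the vertex toward the contact point: set
\[
y' := \Bigl(1 - \tfrac{a}{b}\Bigr) x + \tfrac{a}{b} y, \qquad s' := \Bigl(1 - \tfrac{a}{b}\Bigr)\Bigl(t + \tfrac{a}{2b}\abs{x - y}^2\Bigr) + \tfrac{a}{b}\, s.
\]
A direct expansion of squares gives the key identity
\[
P_{(y', s'; b)}(\xi, \tau) - P_{(y, s; a)}(\xi, \tau) = \tfrac{a - b}{2}\abs{\xi - x}^2 + (b - a)(\tau - t),
\]
which is non-positive on $\{\tau \leq t\}$ since $a \leq b$. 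Consequently $P_{(y', s'; b)} \leq P_{(y, s; a)} < u - \min_{Q_1} u$ on $B_1 \times \{\tau < t\}$ and equality is preserved at $(x, t)$, so $P_{(y', s'; b)}$ certifies $(x, t) \in A_b(\overline{Q}_1)$.

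The main obstacle is verifying $(y', s') \in \overline{Q}_1$. The $y'$-component lies on the segment between $x$ and $y$, both in $\overline{B}_1$, hence $y' \in \overline{B}_1$. For $s'$, the crucial input is the non-negativity $P_{(y, s; a)}(x, t) = u(x, t) - \min_{Q_1} u \geq 0$, which rearranges to $\tfrac{1}{2}\abs{x - y}^2 \leq t - s$. Substituting this into the formula for $s'$ together with $t, s \in [-1, 0]$ and $a/b \in (0, 1]$ gives, after a short rearrangement, the convex-combination bound $s' \leq (1 - (a/b)^2)\, t + (a/b)^2 s \leq 0$; the lower bound $s' \geq (1 - a/b)t + (a/b)s \geq -1$ is automatic since the extra term $\tfrac{a(b-a)}{2b^2}\abs{x - y}^2$ is non-negative. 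Without the inequality coming from $u(x,t) \geq \min_{Q_1} u$, this positive term could push $s'$ above $0$, so it is exactly this observation that makes the steepening argument work.
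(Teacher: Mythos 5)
Parts (ii) and (iii) of your argument are correct. For (iii), your vertex formulas do satisfy the identity $P_{(y',s';b)}-P_{(y,s;a)}=-\tfrac{b-a}{2}\abs{\xi-x}^2+(b-a)(\tau-t)$, and you correctly isolate the one nontrivial verification, namely $s'\in[-1,0]$, which follows from $P_{(y,s;a)}(x,t)=u(x,t)-\min_{Q_1}u\ge 0$; this is precisely why (iii) is restricted to contact points in $Q_1$ and why the definition of $A_a$ subtracts $\min_{Q_1}u$. The paper omits the proof as straightforward, but this steepening is clearly the intended mechanism.

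The gap is in (i), in your last sentence. The limit procedure only yields $P_{(y_*,s_*;a)}\le u-\min_{Q_1}u$ for $\tau<t_*$ together with equality at $(x_*,t_*)$, whereas the paper's notion of contact demands strict inequality at every $(\xi,\tau)$ with $\xi\in B_1$, $\tau<t_*$. The ``first-contact-time reformulation'' does not repair this: running the first-touching argument for $P_{(y_*,s_*;a)}$ produces a genuine strict contact, but possibly at an earlier point $(\xi',\tau')$ with $\tau'<t_*$, so it certifies $(\xi',\tau')\in A_a(E)$, not $(x_*,t_*)\in A_a(E)$, and you cannot move the vertex to fix this since it must stay in $E$. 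The failure is real, not cosmetic: take $E=\{y_*\}\times[s_*,s_*+\epsilon_0]$ and vertices $(y_*,s_*+\epsilon_k)$ with $\epsilon_k\downarrow 0$, so that $P_{(y_*,s_*+\epsilon_k;a)}=P_{(y_*,s_*;a)}-a\epsilon_k$ lies strictly below $u-\min_{Q_1}u$ wherever the limit parabola merely touches it; one can then construct a continuous bounded $u$ for which each of these lowered parabolas has a strict contact at some $(x_k,t_k)\rightarrow(x_*,t_*)$, while the limit parabola, which is the only admissible witness at $(x_*,t_*)$ (for fixed $y_*$ the value $P_{(y_*,s;a)}(x_*,t_*)$ is strictly decreasing in $s$), also touches $u-\min_{Q_1}u$ at an earlier time; then $(x_*,t_*)\notin A_a(E)$ although it is a limit of points of $A_a(E)$. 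The correct reading, under which your limit argument is already complete, is the non-strict one: $P_{(y,s;a)}\le u-\min_{Q_1}u$ for $\tau\le t$ with equality at $(x,t)$ (equivalently, work with the closure of $A_a(E)$). Nothing downstream is affected by this change: the relations \eqref{Tmap}--\eqref{ContactR} in Lem.\ref{ABP1} and the existence of the minimal $T_{x,t}$ in the proof of Prop.\ref{CHoC} use only the touching, never the strict ordering before the contact time. As written, though, your part (i) claims more than the first-contact trick delivers, and you should state which convention you are using.
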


We end up this section with a version of ABP-estimate regarding $A_a (E)$. This estimate follows an idea similar to \cite{Tso}. As in the usual discussion of parabolic equations, all continuity and differentiability with respect to the $t$-variable are understood as the left-continuity (differentiability) if necessary. We first recall the following definition and theorem.
\begin{defn}
A function $u \in C(\overline{Q}_1)$ is said to be locally uniformly semi-concave in $Q_1$ if for each compact subset $Z$, there exists a constant $b>0$ such that for each $(x,t) \in Z$, there exists a convex parabola $P_{b}$ of opening $b$ contacting $u$ from above.
\end{defn}

\begin{thm}
\label{Alex}
If $u$ is locally uniformly semi-concave in $Q_1$, then there exists a measure-zero set $\mathcal{N}$ such that the following statement holds: 

For each $(x, t) \in Q_1 \setminus \mathcal{N}$, there exists a quadratic polynomial
\[
p_{x,t}(\xi,\tau) = a + b\cdot (\xi - x)  + \beta (\tau - t)+ \frac{1}{2} (\xi - x)^t M (\xi - x), \quad a, \beta\in \R,b \in \R^n, M \in \Sym(n), 
\]
such that
\[\begin{split}
u (\xi, \tau) =p_{x,t} (\xi, \tau)+o (\abs{\xi -x}^2 +\abs{\tau -t}), \quad \abs{\xi -x} ,\abs{\tau - t} \rightarrow 0.
\end{split}\]
\end{thm}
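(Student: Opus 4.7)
The plan is to adapt Alexandrov's classical theorem on the a.e.\ second-order differentiability of convex functions to the parabolic semi-concave setting. The strategy has three stages: reduce $u$ to a function that is simultaneously spatially concave on time-slices and temporally monotone on space-columns; apply the classical slice-wise and column-wise differentiability results; and combine them into the joint parabolic expansion.

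First I would fix a compact $Z \subset Q_1$ and the corresponding uniform $b>0$ from the hypothesis, and set $w(x,t) := u(x,t) - \tfrac{b}{2}|x|^2 + bt$. For each $(x_0,t_0)\in Z$, unpacking the convex parabola $P_{(y,s;b)}(\xi,\tau) = \tfrac{b}{2}|\xi-y|^2 - b(\tau-s)$ that touches $u$ from above at $(x_0,t_0)$ yields an affine function $L_{x_0,t_0}(\xi) = -b\,y\cdot\xi + \mathrm{const}$, independent of $\tau$, such that
\[
w(\xi,\tau) \leq L_{x_0,t_0}(\xi) \text{ for } \tau \leq t_0 \text{ near } (x_0,t_0), \qquad w(x_0,t_0) = L_{x_0,t_0}(x_0).
\]
Setting $\tau = t_0$ shows that $w(\cdot,t_0)$ admits an upper supporting hyperplane at every point of its $t_0$-slice in $Z$, so $w(\cdot,t_0)$ is locally concave in $\xi$ there. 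Setting $\xi = x_0$ and letting $t_0$ vary along the $x_0$-column in $Z$ shows that $t\mapsto w(x_0,t)$ is non-decreasing on that column.

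Next I would apply Alexandrov's classical theorem slice-by-slice to obtain a second-order spatial Taylor expansion of $w(\cdot,t)$ at almost every $x$ in each slice, and Lebesgue's theorem on differentiation of monotone functions column-by-column to obtain differentiability of $w(x,\cdot)$ at almost every $t$ in each column. Fubini then produces a single measure-zero set $\mathcal{N}$ outside of which $Dw$, $D^2_x w$, and $w_t$ are all defined; at $(x,t)\notin \mathcal{N}$, define $p_{x,t}$ using the corresponding $Du$, $D^2_x u$, $u_t$, which differ from those of $w$ only by explicit constants in $b$.

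The final and hardest step is to upgrade these separate slice-wise and column-wise expansions to the joint expansion with error $o(|\xi-x|^2+|\tau-t|)$. The touching convex parabola immediately delivers the matching upper bound on the past cone $\tau\leq t$, and moving the contact point to a nearby future point of $Z$ and passing to the limit handles $\tau > t$. The reverse bound is the main obstacle: I would argue at Lebesgue density points of $Z$ combined with Lusin-type approximate continuity of the derivative map, so that the upper bounds from touching parabolas at nearby points of $Z$ squeeze $u$ from below along the parabolic scale $|\xi-x|^2\sim|\tau-t|$. The asymmetry between the second-order spatial expansion and the first-order temporal one is essential: it allows the mixed-direction error to be absorbed into the joint $o(\cdot)$-rate without any additional regularity in $t$.
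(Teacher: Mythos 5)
Your reduction is exactly the paper's: subtracting $\tfrac{b}{2}\abs{x}^2-bt$ and observing that the one-sided touching parabola becomes a supporting affine function independent of $\tau$, giving concavity in $x$ on each slice and monotonicity in $t$ on each column, is precisely how the paper begins. But at that point the paper simply invokes the parabolic version of Aleksandrov's differentiability theorem (Krylov, Appendix 2, Theorem 1, p.~444), which is stated exactly for functions concave in $x$ and monotone in $t$ and yields the joint expansion with error $o(\abs{\xi-x}^2+\abs{\tau-t})$. You instead attempt to re-derive that theorem, and this is where your argument has a genuine gap.

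Two concrete problems. First, the claim that the touching convex parabola ``immediately delivers the matching upper bound on the past cone'' is false: touching from above at $(x,t)$ only gives $u(\xi,\tau)\le P_{(y,s;b)}(\xi,\tau)$ for $\tau\le t$, i.e.\ an upper bound whose Hessian is $bI$ and whose time slope is $-b$; relative to the candidate polynomial $p_{x,t}$ this is an error of size $O(\abs{\xi-x}^2+\abs{\tau-t})$, not $o$, so matching the upper bound to second order is itself part of what must be proved, and the device of ``moving the contact point to a nearby future point'' only transfers data attached to the nearby contact point, which is again controlled only approximately and only a.e. Second, and more fundamentally, the slice-wise Alexandrov expansion of $w(\cdot,t)$ at $x$ together with the column-wise Lebesgue derivative of $w(x,\cdot)$ at $t$ (even after a Fubini selection) gives no control whatsoever on $w(\xi,\tau)$ when $\xi\neq x$ and $\tau\neq t$: monotonicity in $t$ only gives a sign, and the time derivative along nearby columns exists merely a.e.\ and can be arbitrarily large on the exceptional set, so the cross increment $w(\xi,\tau)-w(\xi,t)$ is not absorbed into $o(\abs{\xi-x}^2+\abs{\tau-t})$ by the density-point/Lusin sketch you give. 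Upgrading the separate expansions to the joint parabolic one is exactly the content of Krylov's theorem, whose proof requires a quantitative interplay between the concavity in $x$ and the monotone (measure-valued) time derivative; your proposal names the difficulty but does not supply that argument. The efficient fix is to do what the paper does: after your (correct) reduction, cite the parabolic Aleksandrov theorem rather than prove it.
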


\begin{proof}
Since $u$ is locally uniformly semi-concave, for each compact subset $Z$, there exists $b$ such that
\[
\tilde{u} (x, t)  := u (x, t) - (b\abs{x}^2/2 - bt) ,\quad (x,t) \in Z
\]
is concave in $x$ and monotone in $t$. Thus, by applying the parabolic version of Alexandrov's differentiability theorem (see Appendix 2 of \cite{Krylov}, Theorem 1 on Page 444) to $\tilde{u}$, the desired conclusion follows. 
\end{proof}

The above theorem allows one to define $Du, D^2 u, u_t$ at $(x,t)  \in Q_1 \setminus \mathcal{N}$. In the rest of this paper, we shall understand derivatives of $u$ away from $\mathcal{N}$ in the above sense.

Here comes our version of ABP-estimate.
\begin{lem}
\label{ABP1}
Let $u \in C(\overline{B}_1 \times \R)$ be locally uniformly semi-concave in $Q_1$. Suppose that
\[
A_{a} (E) \subset  Q_1.
\]
Then
\[
\abs{E} \leq \int_{A_{a} (E) \setminus \mathcal{N}'} \det \smp{I +a^{-1}D^2u (x,t)} \smp{1 -a^{-1} u_{t} (x,t)} \; dxdt.
\]
where $\mathcal{N}'$ is a measure-zero set.
\end{lem}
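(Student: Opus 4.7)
The plan is a parabolic Alexandrov--Bakelman--Pucci argument: construct an ``apex'' map $\Phi$ from the contact set to $\R^{n+1}$, argue that its image covers $E$ modulo null sets, and estimate $|E|$ via the area formula applied to $\Phi$.

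Let $\mathcal{N}'$ extend the null set $\mathcal{N}$ of Theorem~\ref{Alex} so that $u$ admits a second-order Taylor expansion outside $\mathcal{N}'$. At each $(x,t) \in A_a(E) \setminus \mathcal{N}'$, some apex $(y,s) \in E$ makes $P_{(y,s;a)}$ touch $u - \min_{Q_1}u$ from below at $(x,t)$; matching values and spatial gradients at $(x,t)$ forces
\[
y = x + a^{-1}Du(x,t), \qquad s = t - a^{-1}\bigl(u(x,t) - \min_{Q_1}u\bigr) - \tfrac{1}{2a^2}|Du(x,t)|^2,
\]
so $\Phi(x,t) := (y,s)$ is single-valued. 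Conversely, given $(y,s) \in E$, the auxiliary function $f(\xi,\tau) := (u(\xi,\tau) - \min_{Q_1}u) - P_{(y,s;a)}(\xi,\tau)$ is strictly positive on $\bar B_1$ for sufficiently negative $\tau$, and as $\tau$ increases there is a first time $t_\ast$ at which $\min_{\xi \in \bar B_1} f(\xi, t_\ast) = 0$; by the hypothesis $A_a(E) \subset Q_1$, the minimizer $x_\ast \in B_1$ and $t_\ast \in (-1,0]$, so $(x_\ast, t_\ast)$ is a contact point with $\Phi(x_\ast, t_\ast) = (y,s)$. Hence $E \subset \Phi(A_a(E) \setminus \mathcal{N}')$ up to null sets.

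At each $(x,t) \in A_a(E) \setminus \mathcal{N}'$ the contact conditions force $I + a^{-1}D^2 u(x,t) \geq 0$ (from the spatial second-order comparison of the concave parabola with $u$) and $1 - a^{-1}u_t(x,t) \geq 0$ (from the left-sided difference quotient $u(x,\tau) - u(x,t) \geq a(\tau - t)$ as $\tau \uparrow t$), so the integrand is non-negative on the contact set. Differentiating the defining formulas for $\Phi$ produces the block Jacobian
\[
J_\Phi = \begin{pmatrix} I + a^{-1}D^2 u & a^{-1}Du_t \\ -a^{-1}(Du)^T(I + a^{-1}D^2 u) & 1 - a^{-1}u_t - a^{-2}Du \cdot Du_t \end{pmatrix},
\]
and adding $a^{-1}(Du)^T$ times the first block-row to the bottom row annihilates the lower-left block and reduces the bottom-right entry to $1 - a^{-1}u_t$, whence
\[
\det J_\Phi = \det(I + a^{-1}D^2 u)(1 - a^{-1}u_t) \geq 0.
\]

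Finally, since local uniform semi-concavity makes $Du$ of bounded variation, $\Phi$ decomposes as a countable union of Lipschitz maps (after restricting to sets where $Du$ is bounded), and the area formula applies. Combining surjectivity with the Jacobian identity gives
\[
|E| \leq \bigl|\Phi(A_a(E) \setminus \mathcal{N}')\bigr| \leq \int_{A_a(E) \setminus \mathcal{N}'} |\det J_\Phi|\, dx\, dt = \int_{A_a(E) \setminus \mathcal{N}'} \det(I + a^{-1}D^2 u)(1 - a^{-1}u_t)\, dx\, dt,
\]
as claimed. The main technical obstacle is the joint justification of the surjectivity $E \subset \Phi(A_a(E) \setminus \mathcal{N}')$ modulo null sets (one needs the $\Phi$-image of $\mathcal{N}'$ to be null, so uncovered apexes do not spoil the measure estimate) and of the area formula for $\Phi$ under only semi-concavity regularity; the pointwise sign constraints and the block-determinant cancellation are direct.
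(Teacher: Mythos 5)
Your construction of the contact map, the sign constraints $I+a^{-1}D^2u\ge 0$, $u_t\le a$ at contact points, and the block-row reduction giving $\det J_\Phi=\det(I+a^{-1}D^2u)(1-a^{-1}u_t)$ all match the paper's argument (your Jacobian is in fact written out more carefully than the paper's), and your first-touching argument for surjectivity, using $A_a(E)\subset Q_1$ to place the contact point in the interior, is the same as the paper's. The genuine gap is exactly the point you defer at the end: you never establish the regularity of $\Phi$ on the contact set, and the substitute you offer does not close it. Knowing only that $Du$ is $BV$ and invoking a countable decomposition into Lipschitz pieces gives an area-formula inequality only up to an exceptional null set $N$ in the domain, and since $\Phi$ is not known to map null sets to null sets, neither $\Phi(N)$ nor $\Phi(\mathcal{N}'\cap A_a(E))$ is controlled; hence the first inequality $\abs{E}\le\abs{\Phi(A_a(E)\setminus\mathcal{N}')}$ is unproven. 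An apex all of whose contact points land in $\mathcal{N}'\cup N$ could a priori contribute positive measure to $E$, so the chain of inequalities does not go through as written.

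The missing idea, which is the heart of the paper's proof, is that $\Phi$ (the paper's $T$) is globally Lipschitz on $A_a(E)$. This follows from the two-sided touching available precisely on the contact set: $A_a(E)$ is a compact subset of $Q_1$, so locally uniform semi-concavity provides a single opening $b$ such that every contact point is touched from above by a parabola of opening $b$, while by definition it is touched from below by a parabola of opening $a$; this pinching yields $\abs{T(x,t)-T(x',t')}\le C(a,b)\smp{\abs{x-x'}+\abs{t-t'}}$ for contact points. With $T$ Lipschitz on $A_a(E)$, the image of any null subset of $A_a(E)$ is null (so discarding $\mathcal{N}'$ costs nothing in the surjectivity step), and the area formula for Lipschitz maps applies directly, with no countable decomposition needed. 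Adding this Lipschitz estimate — and using it in both places — is what turns your outline into a complete proof.
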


\begin{proof}
Without loss of generality, we may assume that $\min_{Q_1} u  = 0 $. Since $A_{a} (E) $ is a compact subset of $Q_1$, the uniform semi-concavity allows us to find a contact parabola of opening $b$ from above for each $(x,t) \in A_{a } (E)$. The constant $b$ is independent of $(x,t)$. On the other hand, by the definition of contact set, for each $(x,t)\in A_{a}(E)$, there exists $(y,s) \in E$ such that $P_{(y,s;a)}$ contacts $u$ at $(x,t)$. Thus on each $(x, t) \in A_a (E)$, $u$ is contacted from above and below by parabolas; therefore, $u$ is differentiable at $(x,t)$.

By the contact condition and the assumption that $A (E) $ lies in the interior of $Q$, we have
\begin{equation}
\label{Tmap}
\begin{cases}
y = x + a^{-1} \nabla u (x, t)  \\
s =t - a^{-1}  u (x, t) - \frac{1}{2} \abs{x -y}^2  ,
\end{cases} \quad
 \end{equation}
and
\begin{equation}
\label{ContactR}
u_{t} \leq a,  \quad  \;  D^2 u \geq - a I_n .
\end{equation}
where $I_n$ is the $n\times n$ identity matrix.

Let $T$ be the mapping that maps $(x,t)$ to $(y,s)$ according to Eq.\eqref{Tmap}. It is easy to check that $T$ is a Lipschitz map and
\[
\abs{T(x,t) - T(x',t')} \leq C (\abs{x -x'} + \abs{t -t'}),  \quad \text{ where } C \text{ only depends on } a, b.
\]
Meanwhile, by the boundedness and continuity of $u$, for each $(y,s) \in E$, there exists $(x,t) \in A_{a} (E)$ such that $P_{(y,s ; a)} $ contacts $u$ at $(x,t)$. Hence $T$ is a surjective Lipschitz map from $A_a (E)$ to $E$. 

Apply the Area formula, we obtain  
\[
\abs{E} = \abs{T (A_a (E))} = \int_{A_a (E ) \setminus S}\abs{  \det{D_{x,t } T} }  \; dxdt
\]
for any measure-zero set $S$.

To prove the lemma, we are left to compute the Jacobian of $T$. Let 
\[
\mathcal{N'} =  \mathcal{N} \cup \{ (x, t) \in A_a (E)| \; T \text{ is not differentiable at } (x, t)\},
\]
where $\mathcal{N}$ is given in Thm.\ref{Alex}. Since $T$ is Lipschitz, $\mathcal{N'}$ has zero measure. Moreover, by \eqref{Tmap},  for each $(x, t) \in A_a (E) \setminus \mathcal{N}' $, 
\[
D_{x, t} T (x,t) = \begin{pmatrix}
D_x y & \partial_t y \\
D_x s & \partial_t s
\end{pmatrix} = \begin{pmatrix}
I + a^{-1}D^2 u(x, t) & \partial_{t} y\\
0 & 1- a^{-1} u_{t} (x,t)
\end{pmatrix} .
\]
By the contact relation, the diagonal entries are all nonnegative. The desired estimate then follows immediately.
\end{proof}

\section{Homogeneity of Contact Sets}
In this section we establish the main ingredient (Prop.\ref{HoC}) in proving Thm.\ref{Main}.

\begin{prop}[Homogeneity of Contact sets]
\label{HoC}
Let 
\begin{equation}
\label{Constant0}
c_0 = \frac{\lambda^2}{\Lambda^2 (n+5)} \exp\{ - 10^3 \Lambda n /\lambda\}
\end{equation}
and $\PB^{-\theta}_{T_0} (x_0, t_0) \subset Q_1$. Let $F: \Sym (n) \rightarrow \R$ satisfy $\Hp{0}1) - \Hp{0}3)$ and $u \in C (\overline{B}_1 \times \R) $ be a bounded and locally uniformly semi-concave function. 

Suppose that $\tfrac{3}{4} \leq \theta \leq 4$ and
\[
F (D^2u ) - u_t \leq 0 , \quad \text{ in } Q_1.
\]
Then there exists a universal constant $c_1$ such that the following statement holds:

For every  $a \in (0, c_0 \delta) $ and every $\PB_{T_1}^{\theta} (x_1, t_1)$ with $(x_1,t_1) \in \PB^{-\theta}_{T_0} (x_0, t_0)$, if
\[
(\PB_{T_1}^{\theta} (x_1, t_1) \cap \{(x, t) |\; t = t_1 + T_1\})\cap (A_a (\overline{Q}_1) \cap \PB_{T_0}^{-\theta} (x_0,t_0) ) \neq \emptyset   ,
\]
then 
\[
 \abs{ A_{c_1^{-1} a}  (\overline{Q}_1 ) \cap \PB^{-\theta}_{T_0} (x_0, t_0)   \cap  \PB^{\theta}_{T_1} (x_1, t_1)  }  \geq c_1  \abs{\PB^{\theta}_{T_1} (x_1, t_1)} .   
\]
\end{prop}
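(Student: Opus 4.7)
The plan is to adapt the strategy of \cite[Lem.~2.2]{Savin2} to the parabolic setting, using the intersection geometry of parabolic balls from \S2 together with the ABP estimate Lem.\ref{ABP1}. The underlying picture is: a single contact from below with opening $a$ gives, via the uniform ellipticity of $F$ and a geometric "widening" construction, a whole family of contacts with opening $c_1^{-1}a$ whose contact points fill a definite fraction of the target parabolic ball.

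First I would set up the geometry. By hypothesis there exists $(\bar x,\bar t)$ with $\bar t=t_1+T_1$, $(\bar x,\bar t)\in\PB^{-\theta}_{T_0}(x_0,t_0)$, and a vertex $(y_0,s_0)\in\overline{Q}_1$ for which $P_{(y_0,s_0;a)}$ contacts $u-\min_{Q_1} u$ from below at $(\bar x,\bar t)$. Since the top cap meets $\PB^{-\theta}_{T_0}(x_0,t_0)$, one has $T_1\le t_0-t_1$, so Lem.\ref{intersection1} applies with $T=T_1$ and yields a cylinder $Q_r(x_2,t_2)\subset\PB^\theta_{T_1}(x_1,t_1)\cap\PB^{-\theta}_{T_0}(x_0,t_0)\cap\{t_1+T_1/4\le t\le t_1+T_1/2\}$ of volume at least $\eta_0\,|\PB^\theta_{T_1}(x_1,t_1)|$. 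I then fix a reference point $(x^*,t^*)\in Q_{r/4}(x_2,t_2)$ and take as vertex set
\[
V\;:=\;\PB^{-1/2}_{r^2/16}(x^*,\,t^*-\tfrac{r^2}{16}).
\]
Property iii) of Lem.\ref{intersection1} is what makes this choice useful: for every $(y,s)\in V$, the parabolic ball $\PB^{1/2}_{t^*-s}(y,s)$, which is exactly the positivity region of $P_{(y,s;Ka)}$ truncated at time $t^*$, sits inside $Q_r(x_2,t_2)$.

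With $V$ in hand, the next task is to show that for a universal $K=c_1^{-1}$, each $(y,s)\in V$ produces a lifted parabola $P_{(y,s;Ka)}+c(y,s)$ that contacts $u-\min_{Q_1} u$ from below at some point $(x,t)\in A_{Ka}(\overline Q_1)\cap Q_r(x_2,t_2)$. The competitor here is the seed parabola $P_{(y_0,s_0;a)}$: since $u-\min_{Q_1}u \ge P_{(y_0,s_0;a)}$ globally with equality at the top-cap point $(\bar x,\bar t)$, one has explicit two-sided control on $u-\min_{Q_1}u$ over $Q_r(x_2,t_2)$ (the lower bound from the seed, the upper bound from the value at $(\bar x,\bar t)$). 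Choosing $K$ large and universal forces the first-from-below contact with $P_{(y,s;Ka)}+c(y,s)$ to occur strictly inside $\PB^{1/2}_{t^*-s}(y,s)$ rather than on its parabolic boundary, and property iii) places this contact point inside $Q_r(x_2,t_2)$. Combining Lem.\ref{contact set}(ii) with Lem.\ref{ABP1} applied with opening $Ka$ to the vertex set $V$ then gives
\[
|V|\;\le\;\int_{A_{Ka}(V)\setminus\mathcal{N}'}\det\!\bigl(I+(Ka)^{-1}D^2u\bigr)\bigl(1-(Ka)^{-1}u_t\bigr)\,dx\,dt,
\]
with the integration domain contained in $Q_r(x_2,t_2)\cap A_{c_1^{-1}a}(\overline Q_1)\cap\PB^{-\theta}_{T_0}(x_0,t_0)\cap\PB^\theta_{T_1}(x_1,t_1)$.

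The main obstacle is the pointwise bound on the ABP integrand. At each $(x,t)\in A_{Ka}(V)$ the contact-from-below delivers $D^2u(x,t)\ge -KaI_n$ and $u_t(x,t)\le Ka$; a matching upper bound on the positive part of $D^2u$ comes from combining the sub-solution inequality $F(D^2u)\le u_t$, the Pucci-type formulation of the uniform ellipticity $\Hp{0}2)$, and the semi-concavity of $u$. The quantitative cost of iterating these bounds while staying inside the uniform-ellipticity neighbourhood $\Ud(0)$ is what dictates the explicit exponential factor appearing in $c_0$ in \eqref{Constant0}, and the restriction $a<c_0\delta$ is precisely what keeps all matrices $D^2u$ encountered in the range where $\Hp{0}2)$ applies. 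Once the integrand is bounded by a universal constant $C$, one gets
\[
|V|\;\le\;C\,\bigl|A_{c_1^{-1}a}(\overline Q_1)\cap\PB^{-\theta}_{T_0}(x_0,t_0)\cap\PB^\theta_{T_1}(x_1,t_1)\bigr|,
\]
and since $|V|\gtrsim|Q_{r/4}(x_2,t_2)|$ is comparable to $|\PB^\theta_{T_1}(x_1,t_1)|$ by Lem.\ref{intersection1}(ii), a final adjustment of $c_1$ delivers the desired density estimate.
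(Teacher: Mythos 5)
Your outline reproduces the paper's geometric frame (Lem.\ref{intersection1}, a vertex family in a small backward parabolic ball, the ABP estimate, and the final volume comparison), but it has a genuine gap at the central step. You claim ``two-sided control on $u-\min_{Q_1}u$ over $Q_r(x_2,t_2)$ (the lower bound from the seed, the upper bound from the value at $(\bar x,\bar t)$).'' No such upper bound is available: the contact point $(\bar x,\bar t)$ sits at time $t_1+T_1$, strictly later than the entire cylinder $Q_r(x_2,t_2)\subset\{t_1+T_1/4\le t\le t_1+T_1/2\}$, and for a supersolution the value at a later time gives no pointwise bound from above at earlier times; the contact only yields $u-\min_{Q_1}u\ge P_{(y_0,s_0;a)}$ there. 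Since $a$ can be far smaller than the admissible size $c_0\delta$ of $u$ (this is exactly the situation in the iteration of Prop.\ref{CHoC}, where $a$ is replaced by $c_1^{-k}a$), the function $u$ may exceed the seed parabola by an amount incomparably larger than $KaT_1$ on all of $Q_{r/4}(x_2,t_2)$, and then no universal choice of $K$ forces the first contact of your lifted parabolas to occur inside $\PB^{1/2}_{t^*-s}(y,s)$: the contact can happen near the top cap, outside the cylinder and outside $\PB^{-\theta}_{T_0}(x_0,t_0)\cap\PB^{\theta}_{T_1}(x_1,t_1)$, destroying the localization on which the whole density estimate rests. The paper closes precisely this hole with Lem.\ref{Barrier}: comparing $u$ with $\psi=P_1+\varphi$, where $\varphi$ is an explicit exponential barrier that is a strict subsolution on a parabolic ball whose lateral boundary passes through the contact point, one produces a \emph{specific} point $(y_0,s_0)\in Q_{r/4}(x_2,t_2)$ with $u(y_0,s_0)\le P_1(y_0,s_0)+CaT_1$, and the vertex family must then be centered at that point, not at an arbitrary reference point $(x^*,t^*)$. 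This barrier lemma is the genuinely parabolic ingredient of the proposition, and it is also where the exponential factor in $c_0$ in \eqref{Constant0} actually originates (the barrier's Hessian must stay below $\delta$), not from the ABP integrand bound as you suggest.

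Two smaller points. First, your lifted parabolas $P_{(y,s;Ka)}+c(y,s)$ carry an additive constant, so they are not of the form $P_{(\tilde y,\tilde s;Ka)}$ entering the definition of the contact set; to conclude that the contact points lie in $A_{c_1^{-1}a}(\overline Q_1)$ you must rewrite each perturbed parabola in vertex form, compute the vertex $(\tilde y,\tilde s)$ explicitly, check that the vertex set $\tilde E$ lies in $\overline Q_1$, and verify that $\abs{\tilde E}$ is comparable to the volume of the parameter ball (this is exactly what the paper does in the proof of Prop.\ref{HoC}, and it is also what feeds Lem.\ref{ABP2}). Second, $F(D^2u)-u_t\le 0$ is the supersolution inequality, not the subsolution one; your intended use of it at contact points (via $\Hp{0}2)$ and Lem.\ref{LB} to get $-KaI\le D^2u\le C a I$ and $-\Lambda n a\lesssim u_t\le Ka$, hence a universal bound on the ABP integrand) is the same as the paper's Lem.\ref{ABP2} and is fine once the contact points have been correctly localized.
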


We shall need several lemmas to prove Prop.\ref{HoC}. All lemmas in this section are stated under the assumptions of Prop.\ref{HoC}.

\begin{lem}
\label{LB}
If $M \geq - a I $ and $F (M ) \leq a$ ,then $\norm{M} \leq \delta$.
\end{lem}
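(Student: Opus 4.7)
The plan is to argue by contradiction: suppose $\|M\|>\delta$ and derive a lower bound on $F(M)$ incompatible with $F(M)\leq a$. Since $M\geq -aI$ and $a<c_0\delta\ll\delta$, every eigenvalue of $M$ is at least $-a>-\delta$, so $\|M\|>\delta$ can only come from the largest eigenvalue $\mu_n$ of $M$ exceeding $\delta$. Let $P=e_n e_n^{T}$ denote the orthogonal projection onto a corresponding unit eigenvector.

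I would then manufacture a comparison matrix that sits inside the ellipticity neighborhood $\mathcal{U}_\delta(0)$ yet carries a positive part of size $\sim\delta$:
\[
M':= -aI + \tfrac{\delta}{2}P.
\]
Three properties of $M'$ must be checked, all elementary: $\|M'\|=\max(a,\,\delta/2-a)<\delta$ because $a<c_0\delta<\delta/2$, so $M'\in\mathcal{U}_\delta(0)$; $M'\leq M$, since in the $e_n$-direction $M'$ has eigenvalue $\delta/2-a<\delta<\mu_n$ while in every orthogonal direction its eigenvalue is $-a\leq\mu_i$; and the increment $M'-(-aI)=(\delta/2)P$ is a nonnegative matrix of norm $\delta/2<\delta$. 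The first two items, combined with $\Hp{0}1)$ applied at $M'$ with the nonnegative increment $M-M'$, give $F(M)\geq F(M')$.

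The lower bound on $F(M')$ then comes from telescoping $\Hp{0}2)$ around the interior base point $-aI$: the step from $-aI$ to $M'$ yields $F(M')\geq F(-aI)+\lambda\delta/2$, while the step from $-aI$ to $0$ (with increment $aI$) yields $F(0)-F(-aI)\leq \Lambda a$. Since $F(0)=0$ by $\Hp{0}3)$ applied to $\varphi\equiv 0$, the two combine to
\[
a\;\geq\;F(M)\;\geq\;F(M')\;\geq\;-\Lambda a+\tfrac{\lambda\delta}{2},
\]
i.e., $a\geq\lambda\delta/[2(1+\Lambda)]$. A quick inspection of the formula \eqref{Constant0} for $c_0$ shows $c_0\ll\lambda/[2(1+\Lambda)]$, contradicting $a<c_0\delta$, and the lemma follows.

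The only genuinely delicate point is the design of $M'$: it must simultaneously lie inside $\mathcal{U}_\delta(0)$ (so the structural hypotheses apply), be dominated by $M$ (so monotonicity transfers the estimate upward), and differ from an interior base point by a nonnegative matrix of norm proportional to $\delta$ (so that the uniform-ellipticity step produces a lower bound of the correct order in $\delta$ rather than $a$). The rank-one correction $-aI+(\delta/2)P$ is the minimal construction meeting all three requirements, and once it is in place the proof reduces to a two-step chaining of $\Hp{0}2)$ together with $F(0)=0$; no further analytic input is required.
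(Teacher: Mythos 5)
Your proof is correct and takes essentially the same route as the paper: argue by contradiction, compare $M$ from below with a rank-one perturbation of $-aI$ lying in $\mathcal{U}_{\delta}(0)$, and chain the uniform ellipticity $\Hp{0}2)$ with $F(0)=0$ (from $\Hp{0}3)$) to force $a\geq \lambda\delta/(2(1+\Lambda))$, contradicting $a<c_0\delta$ by the explicit formula \eqref{Constant0}. Your variant with the bump $-aI+\tfrac{\delta}{2}e_ne_n^{T}$ and the telescoping through $-aI$ and $0$ is, if anything, slightly more careful than the paper's comparison matrix $(a+\delta)e\otimes e-aI$ (whose norm is exactly $\delta$ and whose increment from $-aI$ exceeds the $\norm{N}\leq\delta$ restriction in $\Hp{0}2)$), but the underlying argument is the same.
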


\begin{proof}
Argue by contradiction. Otherwise, there exists a direction $e$ such that 
\[
M >  (a + \delta) e\otimes e - a I.
\]
By $\Hp{0}1) $, $\Hp{0}2)$ and $\Hp{0}3)$,
\begin{equation}
\label{UseEq1}
\begin{split}
a \geq & F (M, p,z,x,t) \geq F\smp{ (a+ \delta) e\otimes e - aI} \\
 & \geq \lambda \delta - \Lambda (n-1)  a.
\end{split}
\end{equation}
This leads to a contradiction by the choice of $c_0$.
\end{proof}

\begin{lem}
\label{ABP2}
Let $E$ be a compact subset of $\R^{n} \times \R$.  if  $
A_{a} (E) \subset Q_1$,
then
\[
\abs{A_a (E)} \geq c_2 \abs{E},  \quad \text{ where } c_2 =  \smp{1 + \Lambda n/ \lambda}^{n+1}.
\]
\end{lem}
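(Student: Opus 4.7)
The plan is to apply the ABP-type bound of Lem.~\ref{ABP1} and control the integrand from above using the equation together with uniform ellipticity.

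First I would invoke Lem.~\ref{ABP1}, which gives
\[
\abs{E} \leq \int_{A_a(E)\setminus \mathcal{N}'} \det\smp{I + a^{-1} D^2 u}\smp{1 - a^{-1} u_t}\, dx\, dt.
\]
At every $(x,t)\in A_a(E)\setminus \mathcal{N}'$, the contact from below by a concave parabola of opening $a$ gives Eq.~\eqref{ContactR}, so each eigenvalue of $I + a^{-1} D^2 u$ as well as the scalar $1 - a^{-1} u_t$ is nonnegative. Writing $\mu_0 := 1 - a^{-1} u_t$ and $\mu_i := 1 + a^{-1} \lambda_i(D^2 u)$ for $i = 1,\dots,n$, the integrand at such a point equals $\prod_{i=0}^n \mu_i$. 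Moreover, combining the subsolution inequality $F(D^2 u)\leq u_t\leq a$ with Lem.~\ref{LB} I conclude $\norm{D^2 u}\leq \delta$, so uniform ellipticity along $\Ud(0)$ applies to perturbations of $D^2 u$.

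The key step is to translate uniform ellipticity into a linear constraint on the $\mu_i$. Decomposing $D^2 u + aI \geq 0$ along its eigenbasis and applying $\Hp{0}2)$ successively along these eigendirections (each incremental matrix having operator norm at most $\delta$, which is legitimate thanks to Lem.~\ref{LB} and the smallness of $a$ encoded in $c_0$) yields $F(D^2 u) - F(-aI) \geq \lambda\,\tr(D^2 u + aI)$. The same procedure applied from $-aI$ to $0$, together with $F(0) = 0$ (from $\Hp{0}3)$), gives $F(-aI) \geq -\Lambda n a$. Combining with $F(D^2 u) \leq u_t$ and multiplying by $a^{-1}$ produces the linear relation
\[
\mu_0 + \lambda \sum_{i=1}^n \mu_i \leq 1 + \Lambda n.
\]
Since $\lambda \leq 1$ and every $\mu_i \geq 0$, this implies $\sum_{i=0}^n \mu_i \leq (1+\Lambda n)/\lambda$.

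Finally I would apply AM-GM to the $n+1$ nonnegative numbers $\mu_0,\dots,\mu_n$ to obtain
\[
\prod_{i=0}^n \mu_i \leq \smp{\frac{1+\Lambda n}{(n+1)\lambda}}^{n+1}.
\]
Plugging this pointwise upper bound back into the ABP inequality gives $\abs{E} \leq C\abs{A_a(E)}$ with $C$ a universal constant of the form $(1 + \Lambda n/\lambda)^{n+1}$ (up to harmless numerical factors), which is the stated conclusion upon taking reciprocals. The main obstacle I anticipate is the careful bookkeeping involved in applying uniform ellipticity eigendirection by eigendirection: one must verify that every incremental step stays inside $\Ud(0)$, and this is exactly why Lem.~\ref{LB} and the particular smallness built into $c_0$ in Eq.~\eqref{Constant0} are needed.
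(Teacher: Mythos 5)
Your proof is correct and takes essentially the same route as the paper: Lem.~\ref{ABP1} combined with Lem.~\ref{LB} and the viscosity inequality at points of twice differentiability, with uniform ellipticity applied eigendirection by eigendirection to bound the Jacobian pointwise; the only (cosmetic) difference is that the paper bounds the largest eigenvalue of $D^2u$ by $\Lambda n a/\lambda$ and $1-a^{-1}u_t$ by $1+\Lambda n$ separately, whereas your trace constraint plus AM--GM gives a slightly sharper pointwise bound. Note that, as your final ``taking reciprocals'' indicates, the constant in the statement should be read as $c_2=\smp{1+\Lambda n/\lambda}^{-(n+1)}$, i.e. $\abs{E}\leq \smp{1+\Lambda n/\lambda}^{n+1}\abs{A_a(E)}$, which is exactly what both arguments deliver.
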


\begin{proof}
By Lem.\ref{ABP1}, it suffices to control 
\[
\det \smp{I +a^{-1}D^2u (x,t)} \smp{1 -a^{-1} u_{t} (x,t)}
\]
from above on $A_{a} (E) \setminus \mathcal{N}'$. Let 
\[
p_{x,t} (\xi, \tau) =u(x,t)+ b\cdot (\xi - x)  + \beta (\tau  - t)+ \frac{1}{2} (\xi - x)^t M (\xi -x)
\]
be the quadratic polynomial given by Thm.\ref{Alex}. By the contact relation,
\[
M \geq - a I, \quad  \beta \leq a.
\]

For every $\epsilon>0$,
\[
p_{x,t} (\xi, \tau) - \epsilon \smp{\abs{\xi -x}^2 - (\tau - t)}, \quad \tau < t
\]
contacts $u$ from below at some point in $B_{r} (x) \times (t-r^2 , t]$ with some small $r$ depending on $\epsilon$. By the definition of viscosity super-solution, 
\[
F(M - \epsilon I ) \leq \beta \leq a.
\]
Let $C a$ be the largest eigenvalue of $M$ and $e$ be the corresponding eigenvector. By Lem.\ref{LB}, $(C a-\epsilon )\leq \delta$. Then by $\Hp{0}1), \Hp{0}2)$ and $\Hp{0}3)$,
\begin{equation}
\label{UseEq2}
\begin{split}
 a \geq F (M - \epsilon I ) &\geq F\smp{Ca e\otimes e  - (a+\epsilon)I } \\
 & \geq \lambda [ (C-1)a - \epsilon ]- \Lambda (n-1) (a + \epsilon).
\end{split}
\end{equation}
Let $\epsilon$ tend to $0$, we obtain
\[
D^2 u (x, t) \leq \Lambda n/\lambda a.
\]
On the other hand,
\[
 \beta \geq F (M - \epsilon I) \geq -  \Lambda n (a + \epsilon), \quad \forall \epsilon >0.
\]
Combine the above two estimates, we obtain that for each $(x, t) \in A_{a} (E) \setminus \mathcal{N}'$,
\[
\det \smp{I +a^{-1}D^2u (x,t)} \smp{1 -a^{-1} u_{t} (x,t)} \leq  \smp{ 1 + \frac{\Lambda}{\lambda} n }^{n} \smp{1 + \Lambda n}.
\]
The desired estimate follows.
\end{proof}

\medskip

\begin{lem}
\label{Barrier}
Given $( \tilde{x}_1, \tilde{t}_1 ) \in \overline{B_{\sqrt{T_1/\theta} }(x_1)} \times \{t= t_1 + T_1\} \cap A_a (\overline{Q}_1)$, let $P_1 = P_{(y_1,s_1; a)}$ be the corresponding contact parabola. Let $Q_{r}(x_2,t_2)$ be the cylinder given in Lem.\ref{intersection1} with respect to $(x_0, t_0)$ and $(x_1, t_1) $. 

Then there exists a point $(y_0, s_0) \in Q_{r/4} (x_2, t_2)$ such that 
\[
u(y_0, s_0) \leq P_1 (y_0, s_0) + aC T_1 , \quad C \text{ universal. }
\]
\end{lem}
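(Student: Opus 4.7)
I would begin by normalizing $\min_{Q_1} u = 0$ and introducing $v := u - P_1$. By the contact condition for $P_1$, the function $v$ is non-negative on $\overline{B}_1 \times (-\infty, \tilde t_1]$ and vanishes at the contact point $(\tilde x_1, \tilde t_1)$. The desired inequality $u(y_0, s_0) \leq P_1(y_0, s_0) + aCT_1$ is equivalent to finding $(y_0, s_0) \in Q_{r/4}(x_2, t_2)$ with $v(y_0, s_0) \leq CaT_1$; since $(\tilde x_1, \tilde t_1)$ lies at time $t_1 + T_1$ while $Q_{r/4}(x_2, t_2)$ lies at time $\leq t_1 + T_1/2$, the plan is to propagate the smallness of $v$ at the contact point backward in time across one parabolic height $T_1$.

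Next I would translate the supersolution inequality for $u$ into one for $v$. If a smooth $\varphi$ touches $v$ from below at $(x,t)$, then $\varphi + P_1$ touches $u$ from below there, so $F(D^2 \varphi - aI) \leq \varphi_t + a$ (using $D^2 P_1 = -aI$, $\partial_t P_1 = a$). Because $a \leq c_0 \delta$, hypothesis $\Hp{0}2)$ gives $F(D^2 \varphi) \leq F(D^2 \varphi - aI) + \Lambda n a \leq \varphi_t + (1 + \Lambda n) a$, and combining this with $F(0) = 0$ (from $\Hp{0}3)$ applied to $\varphi \equiv 0$) and uniform ellipticity yields $\mathcal{M}^-_{\lambda, \Lambda}(D^2 \varphi) \leq F(D^2 \varphi)$. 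Therefore $v$ is a non-negative viscosity supersolution of
\[
\mathcal{M}^-_{\lambda, \Lambda}(D^2 v) - v_t \leq (1 + \Lambda n)\, a
\]
on $B_1 \times (-\infty, \tilde t_1]$.

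I would then invoke the parabolic weak Harnack inequality for non-negative Pucci supersolutions, as developed in \cite{Wang1}. Applied in a cylinder $Q_\rho(\tilde x_1, \tilde t_1)$ with $\rho$ comparable to $\sqrt{T_1}$---chosen so that the cylinder lies in $\{t \leq \tilde t_1\}$ and its past subcylinder $K^-$ contains $Q_{r/4}(x_2, t_2)$---weak Harnack gives
\[
\Bigl( \fint_{K^-} v^p \Bigr)^{1/p} \leq C \Bigl( \inf_{K^+} v + \rho^2 (1 + \Lambda n) a \Bigr).
\]
Since $v(\tilde x_1, \tilde t_1) = 0$ forces $\inf_{K^+} v = 0$ and $\rho^2 \sim T_1$, the right side is at most $C' a T_1$. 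A Chebyshev-type argument then produces a subset of $K^-$ of positive relative measure on which $v \leq 2 C' a T_1$; since $Q_{r/4}(x_2, t_2)$ occupies a uniform fraction of $K^-$ (via $r \gtrsim \sqrt{T_1/\theta}$ from Lem \ref{intersection1}), the two subsets must intersect, furnishing the required point $(y_0, s_0)$.

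The principal difficulty is the geometric matching: the time gap $T_1/2$ between $t_2$ and $\tilde t_1$ is comparable to the time-height $\rho^2$ of the Harnack cylinder, so the standard proportions of past and future portions in the weak Harnack inequality must be tuned carefully so that $K^- \supset Q_{r/4}(x_2, t_2)$ while $(\tilde x_1, \tilde t_1) \in K^+$. If a single application does not suffice, one may instead chain a bounded number of Harnack inequalities along the nested parabolic-ball structure supplied by Lem \ref{intersection1} iii), which is tailor-made for precisely such a backward-in-time propagation.
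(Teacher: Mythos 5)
There is a genuine gap at the very first reduction, and it is fatal under the paper's hypotheses. You claim that if $\varphi$ touches $v=u-P_1$ from below then $F(D^2\varphi)\leq F(D^2\varphi-aI)+\Lambda n a$ and then, using $F(0)=0$, that $\mathcal{M}^-_{\lambda,\Lambda}(D^2\varphi)\leq F(D^2\varphi)$, so that $v$ is a nonnegative supersolution of a uniformly parabolic Pucci inequality to which the weak Harnack inequality of \cite{Wang1} applies. But $\Hp{0}1)$--$\Hp{0}2)$ control $F$ only on $\Ud(0)$, i.e.\ for Hessians of norm less than $\delta$ (and increments $N\geq 0$ with $\norm{N}\leq\delta$); nothing is assumed about $F$ at large matrices. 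A test function touching $v$ from below at an interior point can have Hessian of arbitrary size, so neither of the two displayed comparisons is available there: for instance, $F$ could agree with $\lambda\,\tr M$ near $M=0$ and be truncated so as to be constant outside a large ball in $\Sym(n)$; such an $F$ satisfies $\Hp{0}1)$--$\Hp{0}3)$, yet the inequality $F(D^2u)-u_t\leq 0$ carries no Pucci information whatsoever for large Hessians, and $v$ need not be a supersolution of $\mathcal{M}^-_{\lambda,\Lambda}(D^2v)-v_t\leq Ca$. Consequently the weak Harnack inequality cannot be invoked, and the rest of the plan (Chebyshev on $K^-$, intersection with $Q_{r/4}(x_2,t_2)$) has nothing to stand on. This is exactly the difficulty the whole paper is organized around: uniform ellipticity is only available in a $\delta$-neighborhood of the background solution, so one may only evaluate $F$ on Hessians that are guaranteed small.

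The paper's proof respects this constraint by a direct barrier argument: it builds an explicit function $\varphi$ of the self-similar variable $\rho=\abs{x}^2/t$ on a parabolic ball $\wt{\PB}_2$ sitting inside $\overline{Q}_{r/4}(x_2,t_2)$, with the constants $\beta_1,\beta_2,C'$ chosen universally and, crucially, with $\norm{D^2\varphi}_{L^\infty}\leq\delta$ precisely because $a\leq c_0\delta$; hence the verification that $\psi=P_1+\varphi$ satisfies $F(D^2\psi)-\psi_t>0$ only uses $F$ inside $\Ud(0)$, where $\Hp{0}1)$--$\Hp{0}3)$ apply. The definition of viscosity supersolution then forces the (negative) minimum of $u-\psi$ to occur on the bottom $\{t=\delta T_1\}$ of $\wt{\PB}_2$, i.e.\ at a point $(y_0,s_0)\in Q_{r/4}(x_2,t_2)$ where $u\leq P_1+\varphi\leq P_1+CaT_1$. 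If you want to salvage your route, you would in effect have to reprove a weak-Harnack-type statement valid only for the contact-set quantities (where Lem.\ref{LB} bounds the Hessian) — which is essentially the measure-theoretic machinery of \S2--\S3 that this lemma is designed to feed, so even apart from the gap the appeal to \cite{Wang1} would be both circular in spirit and unavailable in substance.
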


\begin{proof}
We shall perform a barrier argument. Recall from Lem.\ref{intersection1} that $r \geq\gamma \sqrt{T_1/\theta}$ where $\gamma = (\sqrt{2}-1 )/4$ (see Eq.\eqref{Lem22r}). Set 
\begin{equation}
\alpha = \frac{\theta}{2} \smp{4 - \frac{\gamma^2}{16} }^{-1}, \quad \delta = \frac{ \gamma^2}{ 16 } \frac{\alpha}{ \theta}.
\end{equation}
Then it is easy to check that $\wt{\PB}_2:=\PB_{T_1 (1/2+\delta)}^{\alpha} (x_2, t_2 - \delta)$ satisfies that
\[
\{ \wt{\PB}_2 \cap \{t \leq t_2 \} \}\subset \overline{Q}_{r/4}(x_2, t_2), \quad   (\tilde{x}_1, \tilde{t}_1) \in \partial \tilde{\PB}_{2} \times \{t = t_1 + T_1\}.
\]
We shall show that the desired point $(y_0,s_0)$ occurs in $ \wt{\PB}_2 \cap \{t \leq t_2\}$.

\begin{center}
\includegraphics[width=14cm, height=4.5cm]{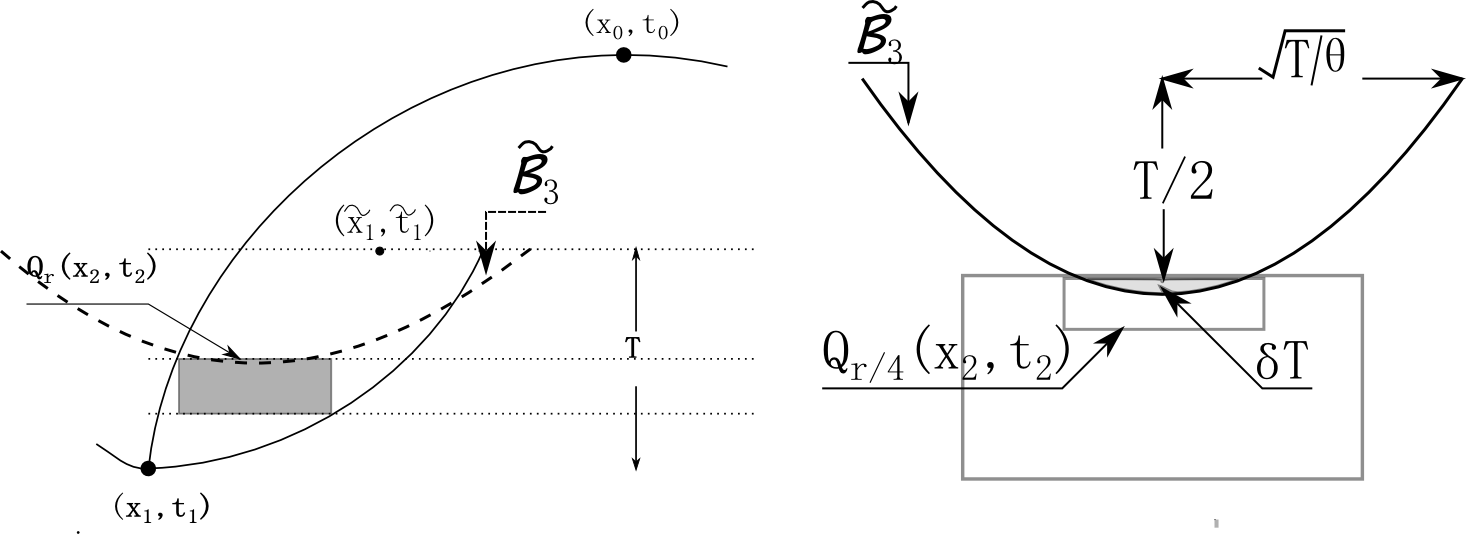}

Figure 3.1
\end{center}

Up to a translation of coordinates, we may assume that $x_2 =0, t_2 - \delta T_1=0$. Consider the following function
\[
\varphi (x, t) := a C' 
T_1 \smp{ \frac{t}{T_1} }^{-\beta_1} \smp{  e^{-\beta_2 \rho}  - e^{-\beta_2 \alpha^{-1}}} ,\quad  \rho = \frac{\abs{x}^2}{t} \leq \alpha^{-1}, \; t \geq \delta T_1,
\]
where $C', \beta_1, \beta_2$ are constants to be determined. 

By direct calculation, we have
\[
D^2 \varphi (x, t) = aC ' \smp{\frac{t}{T_1}}^{- (\beta_1 + 1)} \smp{ 4\beta_2^2 \frac{x\otimes x}{t} - 2\beta_2 I_n  } e^{- \beta_2 \rho}, 
\]
and
\[
\partial_t \varphi (x, t) = aC' \smp{\frac{t}{T_1}}^{-(1+\beta_1)} \smp{ \beta_2\rho-  ( 1-e^{\beta_2 (\rho -\alpha^{-1})} ) \beta_1}  e^{-\beta_2 \rho}.
\]

First choose $\beta_2 = \max \{\alpha^{-1}+1, \Lambda n/\lambda\}$, then choose 
\[
\beta_1 = \{\lambda \beta_2 + \Lambda n\}/ (1 - e^{-\beta_2 \alpha^{-1}/2})
\]
and finally choose 
\[
C' = (n+1)e^{\beta_2 \alpha^{-1}}.
\]
By the choice of $c_0$ and the assumption that $a \leq c_0 \delta$, we have 
\[\norm{D^2 \varphi}_{L^{\infty} (\wt{B}_3 \cap \{t \geq t_2\}) }  \leq \delta.\] 

Let $\psi  = P_1 + \varphi$. The above choice of constants (only depending on $n,\lambda,\Lambda$) ensures that 
\begin{equation}
\label{UseEq3}
F(D^2 \psi) (x, t) -\psi_t > 0,    \quad \forall (x, t) \in \wt{\PB}_2 \cap \{t \geq \delta T \}.
\end{equation}
Then by definition of a viscosity solution, the minimum of $u -\psi$ has to occur on $\partial  \wt{\PB}_2 \cap \{\delta T_1 \leq t < T_1 (1/2 + \delta)\} $. On other hand,
\[\begin{split}
& \psi (\tilde{x}_1, \tilde{t}_1) < P (\tilde{x}_1, \tilde{t}_1) = u(\tilde{x}_1,   \tilde{t}_1), \\
&  \psi (x,t) = 0, \; \forall (x,t) \in \partial  \wt{\PB}_2 \cap \{ t < T_1 (1/2 + \delta) \}.
\end{split}\]
So the minimum of $u -\psi$ has to occur on $\wt{\PB}_3 \cap \{ t = \delta T_1 \}$ and the minimum value is negative. Let $(y_0, s_0)$ be the minimum point, then
\[
u(y_0, s_0) \leq \psi (y_0, s_0) \leq P_1 (y_0, s_0) + \varphi (y_0, s_0).
\]
The desired estimate follows from the explicit expression of $\varphi$.
\end{proof}

\begin{rem}
In the above proof, $\tilde{\PB}_{2}$ may intersect $Q_1^{c}$. However, as our discussion is completely local, we may assume that $u$ satisfies Eq.\eqref{Eq1} in a larger domain, e.g. $Q_2$.
\end{rem}

Now we are ready to prove Prop.\ref{HoC}.

\begin{proof}[Proof of Prop.\ref{HoC}]
Keep the same notations as in Lem.\ref{Barrier}.  Consider now parabolas of the following form
\[
p_{y,s} (\xi, \tau ):= P_1 (\xi, \tau) + 2^{20} C\smp{ (\tau-s) - \abs{\xi-y}^2/2 },\; (y,s) \in \PB^{-1/2}_{ r^2/16} (y_0, s_0 - r^2/16).
\]

First of all, we observe that the opening of $p_{y,s}$ is $C'+1$ with $C'=2^{20}C$. Set $\delta = 1/(C'+1)$, then the vertex $(\tilde{y}, \tilde{s})$ of $p_{y,s}$ is given by
\[
\tilde{y} =\delta y_1 +(1 - \delta) y, \quad \tilde{s} = \delta s_1 + (1 - \delta) s+ h_{y_1} (y), 
\]
with 
\[
h_{y_1} (y) = \tfrac{1}{2 } \bigp{ \delta \abs{\tilde{y} - y_1}^2  + (1- \delta ) \abs{\tilde{y} - y}^2  }.
\]

Let $\tilde{E}$ be the set of vertexes of parabolas $\{p_{y,s} \; | \; (y,s ) \in\PB^{-1}_{r^2/16}  (y_0, s_0 - \tfrac{r^2}{16})  \}$. Then $\tilde{E}$ is the image of $\PB^{-1}_{r^2/16}  (y_0, s_0 - \tfrac{r^2}{16}) $ under the bijective mapping $(y,s) \mapsto (\tilde{y}, \tilde{s})$. Therefore,
\[
\abs{\tilde{E}} = \smp{ \frac{C'}{C' +1} }^{n+1} \abs{\PB^{-1}_{r^2/16}  (y_0, s_0 - \tfrac{r^2}{16} )}.
\]

Next, we claim that 
\[
A_{ (C' + 1 ) a } (\tilde{E} ) \subset Q_{r } (x_2, t_2 )\subset  \PB^{-\theta}_{T_0} (x_0, t_0)   \cap  \PB^{\theta}_{T_1} (x_1, t_1).  
\]
To show the claim, we need the following two observations:

i) Note that
\[
(s_0-s) - \tfrac{\abs{y_0-y}^2}{2}  \geq \tfrac{r^2}{16}  \geq (\sqrt{2}-1)^2 \cdot 2^{-12} T_1,\;   (y,s) \in \PB^{-1/2}_{ r^2/16} (y_0, s_0 - r^2/16).
\]
Thus, by Lem.\ref{Barrier},
\[
p_{y,s}(y_0, s_0)  \geq P_{1} (y_0 ,s_0) + C T_1 \geq u (y_0, s_0).
\]
Therefore, $p_{y,s}$ contacts $u$ before time $s_0$. 

ii) Note that
\[
(t-s) - \abs{x-y}^2/2< 0, \quad \forall  t < s_0 , (x,t ) \not \in \PB^{1/2}_{s_0 -s} (y,s).
\]
Thus
\[
p_{y,s} (x, t )  < P_1(x, t) < u(x, t),  \quad  \forall  t < s_0 , (x,t ) \not \in \PB^{1/2}_{s_0 -s} (y,s).
\]
Therefore, $p_{y,s}$ contacts $u$ in $ \PB^{1/2}_{s_0 -s} (y,s)$.

Combining the above two observations and Lem.\ref{intersection1}, we prove the claim.

\smallskip

Now we apply Lem.\ref{ABP2} to obtain that
\[
\abs{A_{(C+1) a} (\tilde{E})} \geq c_2 \abs{\tilde{E}} = c_2 \smp{ \frac{C'}{C' +1} }^{n+1}  \abs{\PB^{-1/2}_{r^2/16}  (y_0, s_0 - \tfrac{r^2}{16} )}.
\]

Finally, by the explicit formula of $(\tilde{y}, \tilde{s} ) $, it is easy to see that $\tilde{E} \subset Q_1$, hence $A_{(C+1)a }  (\tilde{E} ) \subset A_{(C+1)a}(\overline{Q}_1)$.

The desired estimate follows by choosing $c_1 $ according to $n,c_2$ and $C'$.
\end{proof}

\medskip

By a covering argument, Prop.\ref{HoC} implies the following proposition.
\begin{prop}
\label{CHoC}
Let $u \in C(\overline{B}_1 \times \R)$ be bounded and locally uniformly semi-concave and $F: \Sym(n) \rightarrow \R$ satisfy $\Hp{0}1) - \Hp{0}3)$. Let $\PB^{-\theta}_{T_0} (x_0, t_0) \subset Q_1$. Let $c_0, c_1 $ be the constants given in Prop.\ref{HoC}. 

Assume that $a \in (0, c_0 \delta)$, $(x_0, t_0) \in A_a (\overline{Q}_1 )$, $\tfrac{3}{4}\leq \theta \leq 4$ and
\[
F (D^2u ) - u_t \leq 0 \text{ in } Q_1.
\]
Then for all $k \in \N$ satisfying $c_1^{-k} a \leq c_0 \delta$, 
\[
\abs{\PB_{T_0}^{-\theta} (x_0, t_0) \setminus A_{c_1^{-k}a} (\overline{Q}_1) } \leq (1 -\eta_2 c_1)^{k} \abs{\PB_{T_0}^{-\theta} (x_0, t_0)},
\]
where $\eta_{2}$ is given in Eq.\eqref{chat}.
\end{prop}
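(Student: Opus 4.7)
The plan is to induct on $k$, applying Proposition \ref{HoC} to parabolic balls chosen by a stopping-time rule and then summing along a Vitali subcover from Lemma \ref{VC}. Write $a_{j}:=c_{1}^{-j}a$, $E_{j}:=A_{a_{j}}(\overline{Q}_{1})\cap \PB_{T_{0}}^{-\theta}(x_{0},t_{0})$, and $D_{j}:=\PB_{T_{0}}^{-\theta}(x_{0},t_{0})\setminus E_{j}$. Since $c_{1}\le 1$ and $a_{k}\le c_{0}\delta$, Lemma \ref{contact set}(iii) gives $E_{0}\subset E_{1}\subset\cdots\subset E_{k}$, so the $D_{j}$ are nested decreasing, and the base case $|D_{0}|\le|\PB_{T_{0}}^{-\theta}|$ is trivial. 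For the inductive step it suffices to prove
\[
|D_{k}|\ \le\ (1-\eta_{2}c_{1})\,|D_{k-1}|.
\]

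For each $(x_{1},t_{1})\in D_{k-1}$, set
\[
T_{1}(x_{1},t_{1}):=\inf\{T>0\ :\ \PB_{T}^{\theta}(x_{1},t_{1})\cap\{t=t_{1}+T\}\ \text{meets}\ E_{k-1}\}.
\]
By hypothesis $(x_{0},t_{0})\in A_{a}(\overline{Q}_{1})\cap Q_{1}$, so Lemma \ref{contact set}(iii) places $(x_{0},t_{0})\in E_{k-1}$. The inequality $\theta|x_{0}-x_{1}|^{2}\le t_{0}-t_{1}\le T_{0}$ provided by $(x_{1},t_{1})\in\PB_{T_{0}}^{-\theta}(x_{0},t_{0})$ shows that $(x_{0},t_{0})$ lies in the top slice of $\PB_{t_{0}-t_{1}}^{\theta}(x_{1},t_{1})$, so $T_{1}\le t_{0}-t_{1}\le T_{0}$; positivity $T_{1}>0$ follows from closedness of $A_{a_{k-1}}(\overline{Q}_{1})$ (Lemma \ref{contact set}(i)) together with $(x_{1},t_{1})\notin A_{a_{k-1}}$. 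Since $a_{k-1}=c_{1}a_{k}<c_{0}\delta$, Proposition \ref{HoC} at level $a_{k-1}$ yields
\[
|A_{a_{k}}(\overline{Q}_{1})\cap\PB_{T_{0}}^{-\theta}(x_{0},t_{0})\cap\PB_{T_{1}}^{\theta}(x_{1},t_{1})|\ \ge\ c_{1}\,|\PB_{T_{1}}^{\theta}(x_{1},t_{1})|.
\]
By the very definition of $T_{1}$, for every $T<T_{1}$ the horizontal slice of $\PB_{T_{1}}^{\theta}(x_{1},t_{1})$ at height $t_{1}+T$ coincides with the top slice of $\PB_{T}^{\theta}(x_{1},t_{1})$, hence misses $E_{k-1}$. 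Therefore $\PB_{T_{1}}^{\theta}(x_{1},t_{1})\cap E_{k-1}$ lies in the measure-zero slab $\{t=t_{1}+T_{1}\}$, so the set on the left above is, up to a null set, contained in $E_{k}\cap D_{k-1}$.

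Apply Lemma \ref{VC} to the family $\{\PB_{T_{1}(x_{1},t_{1})}^{\theta}(x_{1},t_{1}):(x_{1},t_{1})\in D_{k-1}\}$ (bounded, with $T_{1}\le T_{0}$) to extract a disjoint sub-collection $\{\PB_{i}\}$ with $D_{k-1}\subset\bigcup_{i}\wh{\PB_{i}}$. Using Eq.~\eqref{chat} and disjointness of the $\PB_{i}$,
\[
|D_{k-1}|\ \le\ \sum_{i}|\wh{\PB_{i}}|\ =\ \eta_{2}^{-1}\sum_{i}|\PB_{i}|,\qquad |D_{k-1}\cap E_{k}|\ \ge\ \sum_{i}|D_{k-1}\cap E_{k}\cap\PB_{i}|\ \ge\ c_{1}\sum_{i}|\PB_{i}|,
\]
so $|D_{k-1}\cap E_{k}|\ge\eta_{2}c_{1}|D_{k-1}|$, and $|D_{k}|=|D_{k-1}|-|D_{k-1}\cap E_{k}|\le(1-\eta_{2}c_{1})|D_{k-1}|$.

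The point requiring the most care is the tuning of $T_{1}$: it must be large enough for the top slice of $\PB_{T_{1}}^{\theta}(x_{1},t_{1})$ to reach the ``old'' contact set $E_{k-1}$ (so that Proposition \ref{HoC} is applicable there) yet small enough that the interior of the ball is essentially disjoint from $E_{k-1}$ (so that the new density produced by Proposition \ref{HoC} genuinely lives in $D_{k-1}$). The stopping-time infimum above achieves both of these simultaneously, and the rest of the argument is routine bookkeeping via the Vitali covering.
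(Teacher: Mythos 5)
Your proof is correct and takes essentially the same route as the paper's: a stopping-time choice of minimal-height parabolic balls whose top slice first reaches the previous contact set, the Vitali-type covering of Lem.~\ref{VC}, Prop.~\ref{HoC} applied to each selected ball, and summation via the volume ratio $\eta_2$, arranged as a one-step decay that iterates. The differences are only organizational: you spell out what the paper leaves implicit, namely the monotonicity $E_{k-1}\subset E_k$ from Lem.~\ref{contact set}(iii) and the fact that each selected ball meets $E_{k-1}$ only in the measure-zero top slice, which justifies the final disjointness bookkeeping.
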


\begin{proof}
We prove by induction on $k$. For simplicity of notations, let $A_{a} = A_{a} (\overline{Q}_1 )$. 

The case that $k=0$ is trivial. Assume that the statement is valid for $k$, we need to deduce the case for $k+1$. 

For each $(x, t) \in \PB_{T_0}^{-\theta} (x_0, t_0) \setminus A_{c_1^{-k}a}$, consider the minimal $T_{x,t}$ such that 
\[
\PB_{T_{x,t}}^{\theta} (x, t) \cap ( \PB_{T_0}^{-\theta} (x_0, t_0) \cap  A_{c_1^{-k}a}) \neq \emptyset.
\]
Since $(x_0, t_0) \in A_a$, $T_{x,t} \leq T_0$ for all $(x,t) \in  \PB_{T_0}^{-\theta} (x_0, t_0)$.

By Lem.\ref{VC}, we may extract from $\{\PB_{T_{x,t}}^{-\theta} (x,t) | \; (x, t) \in \PB_{T_0}^{-\theta} (x_0, t_0) \setminus A_{c_2^{-k}a} \}$ a sequence $\bigp{ \PB_{i} = \PB_{T_{x_i, t_i} }^{\theta} (x_i, t_i)  \; | \; i \in \N} $ such that
\[ 
\PB_{i}'s \text{ are disjoint, } \quad  \PB_{T_0}^{-\theta} (x_0, t_0) \setminus A_{c_1^{-k}a} \subset \bigcup_i \wh{\PB_i}, \quad   \abs{\PB_i }/\abs{\wh{\PB_i} }  \geq \eta_2.
\]

On other other hand, we may apply Prop.\ref{HoC} to each $\PB_i$ and obtain
\[
\abs{\PB_i} \leq  c_1^{-1} \abs{ A_{c_1^{-(k+1)} a}  \cap \PB^{-\theta}_{T_0} (x_0, t_0)   \cap  \PB_i }.
\]
Combine these two, we have
\[
\begin{split}
\abs{\PB_{T_0}^{-\theta} (x_0, t_0) \setminus A_{c_1^{-k}a}} &  \leq \sum_{i=1}^{\infty} \abs{\wh{\PB}_i} = \frac{1}{\eta_2} \sum_i^{\infty} \abs{\PB_i} 
 \leq \frac{1}{ \eta_2 c_1 } \sum_i \abs{ A_{c_1^{-(k+1)} a}  \cap \PB^{-\theta}_{T_0} (x_0, t_0)  \cap  \PB_i }  \\
& \leq \frac{1}{\eta_2 c_1} \abs{ (A_{c_1^{-(k+1)} a}  \setminus A_{c_1^{-k}a})\cap \PB^{-\theta}_{T_0} (x_0, t_0)   } 
\end{split}
\]
The desired estimate then follows immediately.
\end{proof}

\section{Decay of Oscillation}
In this section, we prove that the oscillation of $u$ decays under suitable conditions (Prop.\ref{OD}). This fact will allow us to reach higher regularity via a blow-up argument.

\begin{prop} 
\label{OD}
Let $u \in C(\overline{Q}_1)$ be a solution to Eq.\eqref{Eq1} in $Q_1$ with $F : \Sym (n) \rightarrow \R$ satisfying $\Hp{0}1)-\Hp{0}3)$. If
\[
\norm{u}_{L^{\infty} (Q_1)  } \leq c_0 \delta, 
\] 
then
\[
\osc_{Q_{1/3}} u \leq (1 - \nu_0 ) \osc_{Q_1} u,
\]
where $\nu_0$ is a universal constant.
\end{prop}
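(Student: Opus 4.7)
The plan is to prove Prop.\ref{OD} by contradiction, applying Prop.\ref{CHoC} in tandem to $u$ (as a super-solution) and to the dual $v := M - u$, and driving the two pieces of information to collide on the overlap provided by Lem.\ref{intersection2}. I first normalize by subtracting a constant so that $\min_{Q_1} u = 0$ and $M := \osc_{Q_1} u = \max_{Q_1} u \leq 2 c_0 \delta$. Since Prop.\ref{CHoC} presumes local uniform semi-concavity, I regularize $u$ by a sup-convolution $u^\varepsilon$, which is semi-concave and satisfies the equation up to an $o(1)$ perturbation, and pass to the limit at the end. Setting $\tilde F(N) := -F(-N)$, one checks directly that $\tilde F$ satisfies $\Hp{0}1)$-$\Hp{0}3)$ with the same constants and that $v$ solves $\tilde F(D^2 v) - v_t = 0$, so Prop.\ref{CHoC} applies equally well to $v$; this duality is what lets the argument run symmetrically from the low and high ends of the oscillation.

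Assume for contradiction that $\osc_{\overline Q_{1/3}} u > (1-\nu_0) M$ for a universal $\nu_0$ to be chosen. Then there exist $p^*, q^* \in \overline Q_{1/3}$ with $u(p^*) < \nu_0 M$ and $v(q^*) < \nu_0 M$. For the sliding-parabola step I consider the family $P_s(x,t) := -\tfrac{a}{2}|x - x^*|^2 + a(t-s)$ with $a$ of order $u(p^*)$. At $s = 0$ one has $P_0 \leq 0 \leq u$ on $\overline Q_1$; as $s$ decreases, $P_s$ rises uniformly, and at the critical $s_c \in [-1, 0]$ the parabola first touches $u$ at a point $\hat p \in \overline Q_1$ lying in the past of $p^*$, with $u(\hat p) \leq u(p^*)$ and $\hat p \in A_a(\overline Q_1, u)$. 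A symmetric construction produces $\hat q \in A_a(\overline Q_1, v)$ from $q^*$.

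Now apply Prop.\ref{CHoC} twice. Fix $a_1 := M/3 \leq c_0\delta$ and iterate with $k = \lfloor \log(a_1/a)/\log c_1^{-1} \rfloor$. Since every contact point of $A_{a_1}(\overline Q_1, u)$ satisfies $u \leq a_1$ (from the identity $u(x,t) = a_1(t-s) - \tfrac{a_1}{2}|x-y|^2$ with $(y, s), (x, t) \in \overline Q_1$, which gives $a_1(t-s) \leq a_1$), we obtain
\[
\abs{\PB^{-}(\hat p) \cap \{u > a_1\}} \leq \abs{\PB^{-}(\hat p) \setminus A_{a_1}(\overline Q_1, u)} \leq C (a/a_1)^{\epsilon_0} \abs{\PB^{-}(\hat p)} \leq C \nu_0^{\epsilon_0} \abs{\PB^{-}(\hat p)},
\]
with $\epsilon_0 := -\log(1 - \eta_2 c_1)/\log c_1^{-1} > 0$ universal. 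Symmetrically $\abs{\PB^{-}(\hat q) \cap \{u < 2M/3\}} \leq C \nu_0^{\epsilon_0} \abs{\PB^{-}(\hat q)}$. Lem.\ref{intersection2} gives $\abs{\mathcal E} \geq \eta_1 \abs{\PB^{-}(\hat p)}$ for $\mathcal E := \PB^{-}(\hat p) \cap \PB^{-}(\hat q)$. Since $a_1 = M/3 < M/2$, the sets $\{u > M/3\}$ and $\{u < 2M/3\}$ together cover all values in $[0, M]$, so combining the two measure estimates forces $\eta_1 \leq 2 C' \nu_0^{\epsilon_0}$, which fails once $\nu_0$ is chosen smaller than $(\eta_1/(2C'))^{1/\epsilon_0}$.

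The main obstacle is the geometric book-keeping in the sliding-parabola step: arranging that the first-touch points $\hat p, \hat q$ actually lie in $\overline Q_{11/24}$ so that Lem.\ref{intersection2} applies, and that $|\PB^{-}(\hat p)|$ and $|\PB^{-}(\hat q)|$ are of comparable size. Both are verified by using $p^*, q^* \in \overline Q_{1/3}$ together with the uniform semi-concavity of $u^\varepsilon$, which controls how far the minimizer of $u - P_{s_c}$ can stray from $p^*$. Beyond this, the proof is a routine assembly of the machinery prepared in \S 2--\S 3.
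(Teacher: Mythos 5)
Your overall architecture coincides with the paper's: argue by contradiction, pass to the dual operator $\tilde F(N)=-F(-N)$ and $v=\osc u-u$, produce contact points with a parabola, run the iterated homogeneity of Prop.\ref{CHoC} on backward parabolic balls centered at those contact points, and collide the two measure estimates on the overlap furnished by Lem.\ref{intersection2}; the final bookkeeping with $a_1=M/3$, the exponent $\epsilon_0$, and the threshold sets $\{u>M/3\}$, $\{u<2M/3\}$ is fine. The genuine gap is in the sliding-parabola step, precisely the point you yourself flag as ``the main obstacle.'' With opening $a$ only of the order of $u(p^*)\sim\nu_0 M$, nothing localizes the first contact point $\hat p$: to get $P_{s}(p^*)\ge u(p^*)$ you must slide the vertex back in time by $s_0-s\gtrsim u(p^*)/a\sim 1$, and then the region where $P_{s}\ge 0$ up to time $s_0$ has spatial radius $\sqrt{2(s_0-s)}\sim 1$; since all you know about $u-\min_{Q_1}u$ is nonnegativity, the contact point can land anywhere in $\overline Q_1$, in particular near the lateral boundary. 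There the machinery fails: Prop.\ref{CHoC} requires $\tfrac34\le\theta_0\le 4$ for the backward ball $\PB^{-\theta_0}_{t_0+1}(\hat p)\subset \overline Q_1$, which forces $\abs{\hat x}$ to stay well away from $1$, and Lem.\ref{intersection2} needs both centers in $\overline Q_{11/24}$. Your proposed remedy --- that uniform semi-concavity of $u^\varepsilon$ controls how far the minimizer of $u-P_{s_c}$ can stray from $p^*$ --- is not a valid mechanism: semi-concavity is a one-sided bound on $D^2u$ and gives no localization of where a minimum of $u-P$ occurs. The correct fix is the paper's choice of opening: take $a$ a definite large multiple of $\nu\,\osc_{Q_1}u$ (the paper uses $a=8^2\nu\,\osc_{Q_1}u$) and the fixed parabola $P(x,t)=-\tfrac a2\abs{x-y_0}^2+a\bigl(t-(s_0-8^{-2})\bigr)$; then $P(y_0,s_0)\ge u(y_0,s_0)-\min_{Q_1}u$ while $P<0$ outside $\PB^{1/2}_{8^{-2}}(y_0,s_0-8^{-2})$, so nonnegativity of $u-\min_{Q_1}u$ forces the contact point into that small parabolic ball around $p^*$, hence into $\overline Q_{11/24}$, and this is exactly what makes $\theta_0\in[\tfrac34,4]$ and Lem.\ref{intersection2} applicable. (Your $a\sim u(p^*)$ also degenerates if $u(p^*)=0$; taking $a$ proportional to $\nu_0\,\osc_{Q_1}u$ removes this.)

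Two smaller repairable points. First, the sup-convolution is semi-convex, not semi-concave; you need the inf-convolution to regularize the supersolution half, and a separate regularization for $v=M-u$ (equivalently, for the subsolution half of $u$) --- a single convolution cannot serve both. The paper avoids the issue by proving the one-sided measure-decay statement (Lem.\ref{decay sup}) for supersolutions, doing the regularization inside that lemma and invoking stability under uniform limits, and then applying it once to $u$ and once to $\max_{Q_1}u-u$ with $G(M)=-F(-M)$. Second, you should record that $a$, and all its iterates $c_1^{-j}a\le a_1$, stay below $c_0\delta$ (this is where $\norm{u}_{L^\infty}\le c_0\delta$ and the smallness of $\nu_0$ enter), which your sketch uses implicitly.
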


The proof is divided into several lemmas. First, we recall the sup-inf convolution (see Sec.8 in \cite{UserGuilde}).
\begin{lem}
\label{convolution}
Let $u \in C(\overline{Q}_1)$, define
\[
u_{\epsilon} (x,t)  = \inf_{(\xi,\tau) \in Q_1} \bigp{u(\xi, \tau) + \frac{1}{\epsilon} (\abs{\xi - x}^2 + (\tau - t)^2)}.
\]
Suppose that $u$ satisfies 
\[
F(D^2 u) - u_t \leq 0 \text{ in } Q_1 \text{ in viscosity sense.}
\]
Then $u_{\epsilon}$ is locally semi-concave and for every compact subset $Z \subset Q_1$, there exists $\epsilon$ depending on $(Z, u)$ such that 
\[
F_{\epsilon} (D^2 u_{\epsilon}) - u_t \leq  0 \text{ in } Z \text{ in viscosity sense,} 
\]
where
\[
F_{\epsilon} (M):= \inf \{F (M) : \abs{\xi -x} \leq \epsilon \osc_{Q_1} u , t -\epsilon \osc_{Q_1} u < \tau <t \}.
\]
Moreover, if $F$ satisfies any of $\Hp{0}1) -\Hp{0}3 )$, so does $F_{\epsilon}$.
\end{lem}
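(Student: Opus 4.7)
The plan is to verify the three assertions — semi-concavity of $u_\epsilon$, the viscosity subsolution inequality with $F_\epsilon$, and the passage of $\Hp{0}1)-\Hp{0}3)$ from $F$ to $F_\epsilon$ — in order, each by a routine adaptation of the standard inf-convolution argument to the parabolic setting.

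For the semi-concavity, I would expand $|\xi-x|^2 + (\tau-t)^2$ and separate the terms depending on $(x,t)$:
\[
u_\epsilon(x,t) - \tfrac{1}{\epsilon}\bigl(|x|^2 + t^2\bigr) = \inf_{(\xi,\tau) \in Q_1} \Bigl\{ u(\xi,\tau) + \tfrac{|\xi|^2 + \tau^2}{\epsilon} - \tfrac{2}{\epsilon}\bigl(\xi \cdot x + \tau t\bigr) \Bigr\}.
\]
The right side is an infimum of affine functions in $(x,t)$, so it is concave. Hence $u_\epsilon$ is the sum of a concave function and a fixed quadratic of opening $2/\epsilon$; translating this into the paper's parabola-contact definition, for any compact $Z$ there exists $b = b(\epsilon, Z, u)$ such that at each $(x_0, t_0) \in Z$ one can combine a supporting affine to the concave part with the quadratic to produce a convex parabola of opening $b$ contacting $u_\epsilon$ from above.

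For the subsolution property I would argue by test-function translation. First, the trivial bound $u_\epsilon(x,t) \le u(x,t)$ (take $(\xi,\tau)=(x,t)$) forces any minimizer $(\xi_0,\tau_0)$ for $u_\epsilon(x_0,t_0)$ to satisfy
\[
|\xi_0 - x_0|^2 + (\tau_0 - t_0)^2 \le \epsilon\bigl(u(x_0,t_0) - u(\xi_0,\tau_0)\bigr) \le \epsilon\,\osc_{Q_1}\! u,
\]
so for $Z \Subset Q_1$ and $\epsilon \le \epsilon_0(Z, u)$ the minimizer lies in the interior of $Q_1$. Given a $C^2$ upper test function $\varphi$ for $u_\epsilon$ at $(x_0,t_0) \in Z$, define the translate
\[
\tilde\varphi(\xi, \tau) := \varphi\bigl(\xi + x_0 - \xi_0,\, \tau + t_0 - \tau_0\bigr) - \tfrac{1}{\epsilon}\bigl(|\xi_0 - x_0|^2 + (\tau_0 - t_0)^2\bigr).
\]
Using $(\xi,\tau)$ as a candidate in the infimum defining $u_\epsilon$ at the shifted argument $(x,t) = (\xi + x_0 - \xi_0, \tau + t_0 - \tau_0)$, one verifies $\tilde\varphi \ge u$ on a parabolic neighborhood of $(\xi_0,\tau_0)$ with equality at that point, so $\tilde\varphi$ is an admissible upper test function for $u$ at $(\xi_0, \tau_0)$. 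The viscosity subsolution property of $u$ gives $F(D^2\tilde\varphi(\xi_0,\tau_0)) - \tilde\varphi_t(\xi_0,\tau_0) \le 0$, and since the translation preserves derivatives, $D^2\tilde\varphi(\xi_0,\tau_0) = D^2\varphi(x_0,t_0)$ and $\tilde\varphi_t(\xi_0,\tau_0) = \varphi_t(x_0,t_0)$. The displacement $(\xi_0 - x_0, \tau_0 - t_0)$ falls within the parameter range defining $F_\epsilon$ (after possibly tightening $\epsilon_0$), yielding $F_\epsilon(D^2\varphi(x_0,t_0)) - \varphi_t(x_0,t_0) \le 0$.

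For the inheritance of $\Hp{0}1)-\Hp{0}3)$, monotonicity and the two-sided linear ellipticity bound both survive the infimum: they hold for each translate of $F$ entering the infimum, hence for $F_\epsilon$ itself; the condition that the zero matrix maps to zero is preserved similarly. In the special case $F = F(M)$ treated in this section the infimum is trivial and $F_\epsilon = F$, so this step is automatic. I expect the main delicate point to be the construction and validation of $\tilde\varphi$: one must choose $\epsilon_0$ small enough that not only does the minimizer $(\xi_0,\tau_0)$ lie inside $Q_1$ but also the parabolic neighborhood on which $\varphi$ dominates $u_\epsilon$ shifts into the domain where the comparison defines $u$, so that the touching-from-above inequality $\tilde\varphi \ge u$ is a genuine local statement.
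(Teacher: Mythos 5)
Your semi-concavity step and your treatment of $F_\epsilon$ are fine: writing $u_\epsilon(x,t)-\tfrac{1}{\epsilon}(\abs{x}^2+t^2)$ as an infimum of affine functions gives concavity, and since the local Lipschitz bounds on $u_\epsilon$ let you absorb the linear terms (recall the paper's contact from above is only required for past times $\tau<t$), one does obtain a contact parabola of opening $b(\epsilon,Z,u)$ in the paper's sense; likewise the minimizer estimate $\abs{\xi_0-x_0}^2+(\tau_0-t_0)^2\le\epsilon\,\osc_{Q_1}u$ and the resulting interiority for small $\epsilon$ are correct, and $\Hp{0}1)-\Hp{0}3)$ pass to $F_\epsilon$ trivially (here $F=F(M)$, so $F_\epsilon=F$). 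For reference, the paper itself gives no proof of this lemma — it is quoted from Sec.~8 of \cite{UserGuilde} — so your write-up must stand on its own.

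The genuine gap is the direction of the test functions, and it is not merely a naming slip: the key verification fails as written. The hypothesis $F(D^2u)-u_t\le 0$ in the viscosity sense is the \emph{supersolution} inequality, tested with functions touching $u$ \emph{from below} (this is exactly how the paper uses it, e.g.\ in the proof of Lem.\ref{ABP2}: ``by the definition of viscosity super-solution, $F(M-\epsilon I)\le\beta$''). You instead take $\varphi\ge u_\epsilon$ touching from above and claim that the translate $\tilde\varphi(\xi,\tau)=\varphi(\xi+x_0-\xi_0,\tau+t_0-\tau_0)-\tfrac{1}{\epsilon}d_0$, with $d_0=\abs{\xi_0-x_0}^2+(\tau_0-t_0)^2$, satisfies $\tilde\varphi\ge u$ near $(\xi_0,\tau_0)$. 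This cannot be verified: the two available inequalities, $\varphi(x,t)\ge u_\epsilon(x,t)$ and the competitor bound $u_\epsilon(x,t)\le u(\xi,\tau)+\tfrac{1}{\epsilon}d_0$ at the shifted argument, both bound $u_\epsilon$ from above and point the same way, so they do not chain into a comparison between $\varphi(x,t)-\tfrac{1}{\epsilon}d_0$ and $u(\xi,\tau)$. The inf-convolution transfers touching \emph{from below}: if $\varphi\le u_\epsilon$ near $(x_0,t_0)$ with equality there, then $\tilde\varphi(\xi,\tau)=\varphi(x,t)-\tfrac{1}{\epsilon}d_0\le u_\epsilon(x,t)-\tfrac{1}{\epsilon}d_0\le u(\xi,\tau)$, with equality at $(\xi_0,\tau_0)$ because the infimum is attained there; the supersolution property of $u$ then gives $F(D^2\tilde\varphi(\xi_0,\tau_0))-\tilde\varphi_t(\xi_0,\tau_0)\le 0$, which transfers to $u_\epsilon$ at $(x_0,t_0)$ since translation preserves derivatives. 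With that correction your argument is the standard one; a minor further point is that the minimizer estimate gives $\abs{\xi_0-x_0}\le\sqrt{\epsilon\,\osc_{Q_1}u}$ rather than $\epsilon\,\osc_{Q_1}u$, which matters only for the bookkeeping in the definition of $F_\epsilon$ and is harmless in the present case $F=F(M)$.
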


\medskip

Next we prove that the oscillation of a supersolution decays in measure.
\begin{lem}
\label{decay sup}
Let $ u \in C(\overline{Q}_1)$ and 
\[
\norm{u}_{L^{\infty} (Q_1)} \leq c_0 \delta.
\]
Suppose that $F : \Sym(n) \rightarrow \R$ satisfies $\Hp{0}1)-\Hp{0}3)$,
\[
F (D^2u ) - u_t \leq  0 \text{ in } Q_{1}.
\] 
Given $(y_0, s_0) \in Q_{1/3}$ and $\nu \in (0,1)$ such that
\[
u (y_0, s_0) \leq \nu \osc_{Q_1} u + \min_{Q_1} u.
\]
Then there exists $(x_0,t_0) \in \overline{Q}_{11/24}$ such that for all $k\in \N$ satisfying $8^2\nu c_1^{-k} \leq 1/2$,
\[
\abs{\PB^{\theta_0}_{T_0} (x_0,t_0)\cap \{u > 8^2 \nu c_1^{-k}\osc_{Q_1} u+ \min_{Q_1} u \}} \leq (1 -c_1 \eta_2)^k \abs{\PB^{\theta_0}_{T_0} (x_0,t_0)},
\]
where $T_0 = t_0 + 1$ and $\theta_0 = \min\{ \theta \; | \;  \PB_{T_0}^{\theta} (x_0, t_0 ) \subset \overline{Q}_1\}.$
\end{lem}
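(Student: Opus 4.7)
The plan is to cast the statement as a measure-decay estimate for the complement of the contact set $A_{c_1^{-k}a}(\overline{Q}_1,u)$ inside a suitable parabolic ball, and then invoke the homogeneity corollary Prop.\ref{CHoC}. First I would normalize $\min_{Q_1}u=0$ and write $M:=\osc_{Q_1}u\le 2c_0\delta$; by the sup-inf convolution of Lem.\ref{convolution}, replace $u$ by a locally uniformly semi-concave regularization satisfying the supersolution inequality with a modified $F_\epsilon$ in the same class, all bounds below being stable as $\epsilon\to 0$.

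The crux is constructing a contact point $(x_0,t_0)\in A_a(\overline{Q}_1,u)\cap\overline{Q}_{11/24}$ for $a:=8^2\nu M$. I would consider
\[
F(x,t):=u(x,t)+\tfrac{a}{2}|x|^2+a(s_0-t), \qquad (x,t)\in \overline{Q}_1\cap\{t\le s_0\},
\]
centering the sliding quadratic at the \emph{origin} (not at $y_0$), so each summand is nonnegative. Then $F(y_0,s_0)=u(y_0,s_0)+\tfrac{a}{2}|y_0|^2\le \nu M+a/18=\tfrac{41}{9}\nu M$. Let $(x_0,t_0)$ be a minimizer of $F$ with $t_0$ smallest; then $F^*:=F(x_0,t_0)\le \tfrac{41}{9}\nu M$ and bounding summands separately by $F^*$ gives
\[
|x_0|^2\le \tfrac{2F^*}{a}\le \tfrac{41}{288}<\bigl(\tfrac{11}{24}\bigr)^2, \qquad s_0-t_0\le \tfrac{F^*}{a}\le \tfrac{41}{576};
\]
together with $s_0>-1/9$, this forces $(x_0,t_0)\in\overline{Q}_{11/24}$. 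Setting $s:=s_0-F^*/a\in[-1,0]$, the identity $u-P_{(0,s;a)}\equiv F-F^*\ge 0$ on the restricted domain, with strict inequality for $t<t_0$ (by choice of earliest minimizing time), exhibits $(x_0,t_0)$ as a contact point, i.e., $(x_0,t_0)\in A_a(\overline{Q}_1,u)$.

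For $(x_0,t_0)\in\overline{Q}_{11/24}$ one checks $\theta_0=(t_0+1)/(1-|x_0|)^2\in[3/4,4]$, so the backward-in-time parabolic ball $\PB:=\PB^{-\theta_0}_{T_0}(x_0,t_0)$ lies in $\overline{Q}_1$ and Prop.\ref{CHoC} applies. The hypothesis $8^2\nu c_1^{-k}\le 1/2$ combined with $M\le 2c_0\delta$ yields $c_1^{-k}a\le c_0\delta$, whence
\[
|\PB\setminus A_{c_1^{-k}a}(\overline{Q}_1)|\le(1-\eta_2 c_1)^k|\PB|.
\]
To translate, any $(x,t)\in A_b(\overline{Q}_1,u)$ with contact parabola $P_{(y,s;b)}$, $(y,s)\in\overline{Q}_1$, satisfies $u(x,t)-\min u=-\tfrac{b}{2}|x-y|^2+b(t-s)\le b(t-s)\le b$ since $t-s\le 1$; hence $\{u>b+\min u\}\subset \overline{Q}_1\setminus A_b$. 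Taking $b=c_1^{-k}a=8^2\nu c_1^{-k}M$ and intersecting with $\PB$ gives the claimed measure bound.

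The main obstacle is the contact-point construction: the specific constants $8^2$, $1/3$, $1/9$, $11/24$ must fit together so that the minimizer of $F$ lies in $\overline{Q}_{11/24}$. Centering the slide at the origin (instead of at $y_0$, which would give the weaker bound $|x_0-y_0|^2\le 2\nu M/a=1/32$ and thus $|x_0|$ possibly exceeding $11/24$) is essential for controlling $|x_0|$ directly via the summand $\tfrac{a}{2}|x|^2$. The remaining ingredients (ABP bound, Vitali covering, homogeneity of contact sets) are already packaged in Props.\ref{HoC}-\ref{CHoC}.
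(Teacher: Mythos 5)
Your proposal is correct and follows essentially the same route as the paper: regularize by inf-convolution, produce a contact point $(x_0,t_0)\in A_{a}(\overline{Q}_1)\cap\overline{Q}_{11/24}$ with $a=8^2\nu\osc_{Q_1}u$, apply Prop.\ref{CHoC} on the backward ball $\PB^{-\theta_0}_{t_0+1}(x_0,t_0)$ with $\theta_0\in[3/4,4]$, and convert membership in $A_{c_1^{-k}a}$ into the bound $u-\min_{Q_1}u\le c_1^{-k}a$. The only real difference is in the contact-point construction: the paper slides the concave parabola with vertex at $(y_0,s_0-8^{-2})$ and locates the touching point in $\PB^{1/2}_{8^{-2}}(y_0,s_0-8^{-2})$, whereas you minimize the origin-centered quantity $u+\tfrac{a}{2}|x|^2+a(s_0-t)$ and get the containment in $\overline{Q}_{11/24}$ by direct arithmetic, which is a legitimate (and slightly tighter) variant of the same step.
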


\begin{proof}
Extend $u$ continuously to $\overline{B}_1 \times \R$ with the same bounds. Let $\gamma = \min_{Q_1} u$ and $\delta' = \osc_{Q_1} u$. Observe that the conditions and conclusions of Lem.\ref{decay sup} are stable under uniform limit, then by virtue of Lem.\ref{convolution}, we may assume that $u $ is locally semi-concave in $Q_1$.

Let $a= 8^2 \nu \delta'$, consider 
\[
P (x,t) := - \frac{a}{2} \abs{x -y_0}^2 +a (t-(s_0 -8^{-2})).
\]

Since $\norm{u}_{L^{\infty} (Q_1)} \leq c_0 \delta $, then $
a  < c_0 \delta.$ Since $P(y_0, s_0) = \nu \delta' \geq u (y_0, s_0)-\gamma$ and $P< 0$ outside $\PB_{8^{-2}}^{1/2} (y_0, s_0-8^{-2})$, there exists a point 
\[
(x_0, t_0) \in \PB_{8^{-2}}^{1/2} (y_0, s_0-8^{-2}) \cap A_{a} (\overline{Q}_1 ) .
\]

Now consider  $\PB_0^{-} := \PB_{T_0}^{\theta_0} (x_0, t_0)$.  Since $(y_0, s_0 ) \in Q_{1/3}$, then $(x_0, t_0) \in Q_{11/24}$;  the choice of $\theta_0$ ensures that $\tfrac{3}{4} \leq \theta_0 \leq 4$. Apply Lem.\ref{CHoC}, we obtain that
\[
\abs{\PB_0 \setminus A_{c_1^{-k} a} (\overline{Q}_1)} \leq (1 -  c_1 \eta_2)^{k} \abs{\PB_0}.
\]
Lem.\ref{decay sup} then follows from the observation that if $(x,t)\in A_{c_1^{-k}a} \cap \PB_0$, then
\[
u (x,t) -\gamma  = P_{c_1^{-k}a} (x,t) \leq c_1^{-k}a .
\]
\end{proof}

\begin{proof}[Proof of Prop.\ref{OD}] Let $\gamma = \min_{Q_1} u$, $\Gamma = \max_{Q_1} u$ and $\delta' = \osc_{Q_1} u$. Let $\nu_0$ be a universal constant to be specified later.

Suppose that there exists $(y_0, s_0) \in Q_{1/3} $ such that 
\begin{equation}
\label{lp}
u(y_0, s_0) \leq \gamma + \nu_0 \delta'. 
\end{equation}
We need to rule out the existence of a point $(y_1, s_1)$ such that
\begin{equation}
\label{hp}
u (y_1 , s_1) \geq \Gamma  -\nu_0 \delta'.
\end{equation}

Argue by contradiction. Suppose that such $(y_1, s_1)$ exists. Let $k \in \N $ satisfies $8^2\nu_0 c_{1}^{-k} \leq 1/2$.

First, by Lem.\ref{decay sup}, Eq.\eqref{lp} implies that there exists $ \theta_0 \in (1,3)$ and $(x_0, t_0) \in \overline{Q}_{11/24}$ such that
\begin{equation}
\label{d1}
\abs{\PB^{\theta}_{T_0} (x_0,t_0)\cap \{u > 8^2 \nu_0 c_1^{-k}\delta' + \gamma \}} \leq (1 -c_1 \eta_2)^k \abs{\PB^{\theta}_{T_0} (x_0,t_0)}.
\end{equation}

On the other hand, apply Lem.\ref{decay sup} to $\Gamma - u$ and 
\[
G (M) : = - F(-M),
\]
we obtain that there exists $(x_1, t_1) \in \overline{Q}_{11/24}, \tfrac{3}{4} \leq \theta_1 \leq 4, T_1 = t_1 + 1$ such that
\begin{equation}
\label{d2}
\abs{\PB^{\theta_1}_{T_1} (x_1,t_1)\cap \{u <\Gamma- 8^2 \nu_0 c_1^{-k}\delta'  \}} \leq (1 -c_1 \eta_2)^k \abs{\PB^{\theta_1}_{T_1} (x_1,t_1)}.
\end{equation}

Let $\PB_0^{-} = \PB_{T_0}^{\theta_0} (x_0, t_0),  \PB_{1}^{-} :=\PB^{\theta_1}_{T_1} (x_1,t_1) $ and $
\mathcal{E} := \PB_1^{-} \cap \PB_{2}^-.$ Recall the constant $\eta_1$ given in Lem.\ref{intersection2}

First take $k$ such that
\[
(1- c_1\eta_2)^{k} < \eta_1 / 3,
\]
then choose $\nu_0$ such that
\[
8^2\nu_0 c_1^{-k} < 1/4.
\]
From Eq.\eqref{d1}, Eq.\eqref{d2} and Lem.\ref{intersection2}, we obtain that
\[
\frac{\abs{ \mathcal{E} \cap \{ u < \gamma + \delta' /4 \} } }{ \abs{\mathcal{E}} } , \frac{\abs{\mathcal{E} \cap \{ u > \Gamma - \delta' /4 \} } }{\abs{\mathcal{E}}} \geq \frac{2}{3} ,
\]
which is impossible.
\end{proof}

\medskip 

\begin{cor}
\label{scaledOD}
Let $u \in C(\overline{Q}_1)$ be a solution of \eqref{Eq1} with $F : \Sym (n) \rightarrow \R$ satisfying $\Hp{0}1)-\Hp{0}3)$. If
\[
\sqrt{  \norm{u}_{L^{\infty} (Q_1)} / (c_0 \delta) }\leq 1,
\] 
then
\[
\osc_{Q_{\rho}} u \leq 2 \rho^{\alpha_0} \osc_{Q_1} u , \quad \forall \rho \in \midp{   \sqrt{  \tfrac{\norm{u}_{L^{\infty} (Q_1)} }{ c_0 \delta } },1} ,
\]
where $\alpha_0$ is a universal constant.
\end{cor}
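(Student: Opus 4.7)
The plan is to iterate Prop.\ref{OD} at dyadic scales $r_k := 3^{-k}$. For each $k \geq 0$, choose a centering constant $c_k := \tfrac{1}{2}(\max_{Q_{r_k}} u + \min_{Q_{r_k}} u)$ and define $v_k(y,s) := u(r_k y, r_k^2 s) - c_k$ on $Q_1$. The scalings $D^2 v_k = r_k^2 D^2 u$ and $\partial_s v_k = r_k^2 u_t$ (at the corresponding points) show that $v_k$ solves $F_k(D^2 v_k) - \partial_s v_k = 0$ in $Q_1$ with $F_k(M) := r_k^2 F(r_k^{-2} M)$. The first task is to verify that $F_k$ satisfies $\Hp{0}1)$--$\Hp{0}3)$ with the same constants $(\lambda,\Lambda)$ but with $\delta$ replaced by $\delta_k := r_k^2 \delta$: ellipticity is immediate, $\Hp{0}3)$ carries over because $F_k(0) = r_k^2 F(0) = 0$, and uniform ellipticity of $F_k$ on $\{\norm{M}\leq\delta_k\}$ is a one-line check. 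Since the constants $c_0$ and $\nu_0$ furnished by Prop.\ref{OD} depend only on $(n,\lambda,\Lambda)$, the same pair works at every scale.

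Writing $\rho_* := \sqrt{\norm{u}_{L^\infty(Q_1)}/(c_0\delta)}$, I would prove by induction on $k \geq 0$ that
\[
\osc_{Q_{r_k}} u \leq (1-\nu_0)^k \osc_{Q_1} u \qquad \text{whenever } r_k \geq \rho_*.
\]
The base case $k=0$ is trivial. The inductive step reduces to verifying the hypothesis of Prop.\ref{OD} for $v_k$ with operator $F_k$, namely $\norm{v_k}_{L^\infty(Q_1)} \leq c_0\delta_k$. By the centering,
\[
\norm{v_k}_{L^\infty(Q_1)} = \tfrac{1}{2}\osc_{Q_{r_k}} u \leq \tfrac{1}{2}\osc_{Q_1} u \leq \norm{u}_{L^\infty(Q_1)} \leq c_0 r_k^2 \delta = c_0 \delta_k,
\]
the last inequality being equivalent to $r_k \geq \rho_*$. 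Prop.\ref{OD} applied to $v_k$ then yields $\osc_{Q_{r_{k+1}}} u = \osc_{Q_{1/3}} v_k \leq (1-\nu_0)\osc_{Q_1} v_k = (1-\nu_0)\osc_{Q_{r_k}} u$, closing the induction.

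Finally, for any $\rho \in [\rho_*, 1]$, pick the integer $k$ with $r_{k+1} \leq \rho < r_k$; then $r_{k-1} = 3 r_k > 3\rho \geq 3\rho_* \geq \rho_*$, so the induction is valid up to level $k$. Choose $\alpha_0 > 0$ so that $(1-\nu_0)^k = r_k^{\alpha_0}$, namely $\alpha_0 := \log(1/(1-\nu_0))/\log 3$; after replacing $\nu_0$ by $\min(\nu_0, 1/2)$ if necessary (which only weakens Prop.\ref{OD}), one has $3^{\alpha_0} \leq 2$. Combining this with $r_k \leq 3\rho$ gives
\[
\osc_{Q_\rho} u \leq \osc_{Q_{r_k}} u \leq r_k^{\alpha_0}\osc_{Q_1} u \leq 3^{\alpha_0}\rho^{\alpha_0}\osc_{Q_1} u \leq 2\rho^{\alpha_0}\osc_{Q_1} u.
\]

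The only subtle point is the choice of bound in the inductive step: using the crude estimate $\norm{v_k}_{L^\infty} \leq \norm{u}_{L^\infty}$ (rather than the sharper decaying one provided by the inductive hypothesis) is what produces the precise threshold $\rho_* = \sqrt{\norm{u}_{L^\infty}/(c_0\delta)}$, because the rescaled ellipticity parameter $\delta_k = r_k^2\delta$ shrinks at exactly the rate $r_k^2$. Finer bookkeeping would not improve the statement, so this bound must be the one used at every step.
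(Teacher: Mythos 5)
Your proof is correct and is essentially the same argument as the paper's: iterate Prop.\ref{OD} at the scales $3^{-k}$ via parabolic scaling until the threshold scale $\sqrt{\norm{u}_{L^{\infty}(Q_1)}/(c_0\delta)}$, then interpolate dyadically to get the H\"older-type bound with exponent $\alpha_0=-\log(1-\nu_0)/\log 3$. The only cosmetic differences are that you rescale the operator (keeping $u$, with $\delta_k=r_k^2\delta$) instead of rescaling the solution $w=r^{-2}u(rx,r^2t)$ with $F$ fixed, and that you make explicit the harmless normalization $\nu_0\le 1/2$ needed for the factor $2$.
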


\begin{proof}
Apply Prop.\ref{OD} to 
\[
w (x, t) = r^{-2} u(rx, r^2 t), \quad (x, t) \in Q_1,
\]
we see that 
\[
\norm{u}_{L^{\infty}(Q_1)} \leq c_0 r^2 \delta
 \Rightarrow  \quad 
\osc_{Q_{r/3}} u \leq (1 - \nu_0) \osc_{Q_r} u.
\]
Inductively apply this result, we obtain that
\[
\norm{u}_{L^{\infty}(Q_1)} \leq c_0 3^{-2k} \delta \Rightarrow 
\quad \osc_{Q_{3^{-j}}} u \leq (1 - \nu_0)^{j} \osc_{Q_1} u,  \; \forall j \leq k+1.
\]

Let $\omega(\rho) = \osc_{Q_\rho} u$. For all $\rho \geq 3^{-(k+1)}$, there exists $j \leq k+1$ such that $3^{-(j+1)} \leq \rho < 3^{-j}.$
Thus
\[
\omega(\rho) \leq \omega (3^{-j}) \leq (1 - \nu_0)^{j} \omega(1) \leq 3^{-(j+1)\beta} (1-\nu_0)^j \rho^{\beta} \omega(1).
\]
Then by taking
\[
\alpha_0 = -\log (1 - \nu_0)/ \log 3,
\]
we obtain that
\begin{equation}
\label{rOD1}
\norm{u}_{L^{\infty} (Q_1)} \leq c_0 3^{-2k} \delta  \Rightarrow \; \osc_{Q_{\rho}} u \leq 2 \rho^{\alpha_0} \osc_{Q_1} u,  \quad \forall \rho >3^{-(k+1)}.
\end{equation}

Since $\sqrt{  \norm{u}_{L^{\infty} (Q_1)  }  /( c_0 \delta)}<1$,
there exists $k$ such that 
\[
3^{-2(k+1)} c_0 \delta  \leq \norm{u}_{L^{\infty} (Q_1) } \leq 3^{-2k} c_0\delta.
\]
The desired estimates then follows from \eqref{rOD1}.
\end{proof}

\section{Proof of Thm.\ref{Main}}
Upon obtaining Prop.\ref{OD} and Cor.\ref{scaledOD}, one can follow the proof in \cite{Savin2} line by line to deduce Thm.\ref{Main}. Here we present a slightly different argument for readers' convenience.

First, we recall the underlying idea: Let $u= \epsilon v$, then in the formal sense
\[
F(D^2 u) - u_t = \epsilon \smp{ \tr [DF(0) D^2v] - v_t} + O (\epsilon^2 \norm{D^2 v}^2),
\]
and $v$ solves the linear heat equation with constant coefficients. Therefore, $v$ and thus $u$ should be regular. This formal argument will be made rigorous via a compactness argument.

Next, we recall the following elementary fact.
\begin{lem}
\label{pwEstimate}
Let $u \in C(\overline{Q}_1)$. Suppose that there exist positive constants $(\sigma, r_0, C)$ such that for every $(x,t) \in Q_{1/2}$,  there exists a polynomial
\[
P_{x,t} (\xi,\tau)= \frac{1}{2}\xi^t M_{x,t} \xi + p_{x,t}\cdot \xi + z_{x,t} + \beta_{x,t} t  \text{ with }  \norm{M_{x,t}}, \abs{p_{x,t}}, \abs{z_{x,t}}, \abs{\beta_{x,t}} \leq C
\]
satisfying
\[
\norm{u - P_{x,t}}_{L^{\infty} ( Q_{\sigma^k r_0} ) } \leq  \sigma^{k(2+\alpha)} r_0^{2+\alpha}, \quad \forall k\in \N.
\]
Then $u \in C^{2,\alpha} (Q_{1/2} )$ and
\[
\norm{u}_{C^2 (Q_{1/2})} \leq C, \quad \norm{u}_{C^{2,\alpha}(Q_{1/2})} \leq C/(\sigma r_0)^{\alpha}.
\]
\end{lem}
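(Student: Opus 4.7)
The plan is to derive a Campanato-type characterisation of $C^{2,\alpha}$ from the approximation hypothesis. The argument splits naturally into three steps: identifying the coefficients of $P_{x,t}$ with the classical derivatives of $u$ at $(x,t)$, proving H\"older continuity of each coefficient map, and assembling these into the $C^{2,\alpha}$ bound. For the first step I fix $(x,t) \in Q_{1/2}$ and let $k \to \infty$ in the hypothesis. Using that any polynomial of degree $\le 2$ in space and $\le 1$ in time whose $L^{\infty}$-norm on $Q_r(x,t)$ is $\varepsilon$ has each coefficient bounded by a constant times $\varepsilon r^{-j}$ for spatial degree $j$ and by $\varepsilon r^{-2}$ for its time-coefficient, I deduce that $u$ is twice differentiable in space and once in time at $(x,t)$, with $z_{x,t} = u(x,t)$, $p_{x,t} = Du(x,t)$, $M_{x,t} = D^2 u(x,t)$ and $\beta_{x,t} = u_t(x,t)$.

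The heart of the argument is the H\"older estimate in Step 2. Given $(x,t), (y,s) \in Q_{1/2}$, set $d := \max\{|x-y|, \sqrt{|t-s|}\}$ and choose the integer $k$ with $\sigma^{k+1} r_0 < d \le \sigma^k r_0$. Then the two parabolic cylinders $Q_{\sigma^{k-1} r_0}(x,t)$ and $Q_{\sigma^{k-1} r_0}(y,s)$ share a common subcylinder of parabolic radius of order $d$, since their centres are within parabolic distance $\sigma^k r_0 \le \sigma \cdot \sigma^{k-1} r_0$. On this common cylinder,
\[
\|P_{x,t} - P_{y,s}\|_{L^{\infty}} \le \|u - P_{x,t}\|_{L^{\infty}} + \|u - P_{y,s}\|_{L^{\infty}} \le C d^{2+\alpha}.
\]
Re-centring $P_{y,s}$ at a common base-point turns $P_{x,t} - P_{y,s}$ into a polynomial of the same parabolic degree, and the coefficient bound from Step 1 then yields $\|M_{x,t} - M_{y,s}\| \le C d^{\alpha}$ and $|\beta_{x,t} - \beta_{y,s}| \le C d^{\alpha}$, together with sharper estimates on the differences of $p$ and $z$. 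Specialising to a purely temporal shift ($x = y$, hence $d = \sqrt{|t-s|}$) converts the estimate on $\beta$ into $\alpha/2$-H\"older continuity of $u_t$ in time, matching the parabolic $C^{2,\alpha}$ norm recalled in the paper; the analogous specialisation for $M$ gives $\alpha$-H\"older continuity of $D^2 u$ in space.

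For Step 3, the $C^2$ bound is immediate from the uniform bound $C$ on the polynomial coefficients together with the $k=0$ case, which pins $u$, $Du$, $D^2 u$ and $u_t$ to $z_{x,t}$, $p_{x,t}$, $M_{x,t}$, $\beta_{x,t}$ up to errors of order $r_0^{2+\alpha}$. The $C^{2,\alpha}$ bound results from combining the H\"older regularity of the coefficients with the approximation at scale $r_0$; the factor $(\sigma r_0)^{-\alpha}$ in the final constant arises from the worst case in Step 2, where $d$ sits just above $\sigma^{k+1} r_0$.

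The main obstacle is bookkeeping the parabolic scaling. The asymmetry between spatial radius $r$ and temporal width $r^2$ in $Q_r$ forces a time-linear polynomial of $L^{\infty}$-norm $\varepsilon$ on $Q_r$ to satisfy $|\beta| \le C \varepsilon r^{-2}$, not $C \varepsilon r^{-1}$; this is exactly the mechanism that produces the $\alpha/2$ exponent for the time derivative in the parabolic $C^{2,\alpha}$ norm, rather than the $\alpha$ one might naively expect. Verifying the existence of a common subcylinder of the right size in Step 2 also requires care, but is elementary parabolic geometry. Once the parabolic scaling is consistently respected throughout, the remaining estimates are routine Campanato-type computations.
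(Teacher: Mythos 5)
Your Campanato-type argument is correct and is exactly the standard proof intended here: the paper itself omits a proof of this lemma, recording it as an elementary fact, and your three steps (coefficients are the pointwise parabolic derivatives, H\"older continuity of the coefficient maps by comparing $P_{x,t}$ and $P_{y,s}$ on a common subcylinder at scale $\sim d$, then assembly into the parabolic $C^{2,\alpha}$ norm with the $\alpha/2$ exponent in time) are the expected ones, with the parabolic scaling handled correctly. The only point to make explicit when writing it up is the regime $d>r_0$ (respectively $k=0$), where one falls back on the uniform coefficient bound $C$ to get $\norm{M_{x,t}-M_{y,s}}\leq 2C\,d^{\alpha}/r_0^{\alpha}$, which is where the factor $(\sigma r_0)^{-\alpha}$ in the stated constant actually enters.
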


\medskip 

Thm.\ref{Main} is a direct consequence of the following proposition.

\begin{prop}
\label{IQA}
For each $\alpha \in  (0,1)$, there exists a small constant $r_0$ only depending on $(\delta,K, \omega, n, \lambda,\Lambda, \alpha)$ such that the following statement holds:

For every $r< r_0$ and every solution $u$ to Eq.\eqref{Eq1} with $F  $ satisfying $\Hp{0}1)-\Hp{0}5)$, if there exists a polynomial 
\[
 P_{M,p,z, \beta}= \frac{1}{2} x^tM x + p \cdot x + z + \beta t, 
\] 
with
\[
 \norm{M}, \abs{p},\abs{z}, \abs{\beta} \leq \delta/2, \quad  F(M,p,z, 0, 0) = \beta, 
\]
such that 
\[
\norm{u - P_{M, p,z,\beta}}_{L^{\infty} (Q_r)} \leq r^{2 + \alpha},
\]
then there exists another polynomial $P_{M',p',z',\beta'}$ with 
\[
 r^2\norm{M'- M},  r\abs{p' - p},  \abs{z' - z} , r^2\abs{\beta' - \beta} \leq C  r^{2 +\alpha}, \quad F(M',p',z', 0, 0) = \beta'
\]
such that 
\[
\norm{u - P_{M',p',z',\beta'}}_{L^{\infty}(Q_{\sigma r} )} \leq (\sigma r)^{2+\alpha}, 
\]
where $C, \sigma$ are universal constants and shall be specified in the proof.
\end{prop}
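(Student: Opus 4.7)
Proposal: I follow a compactness--blow-up scheme of the kind developed by Savin \cite{Savin2}, adapted to the parabolic setting. Argue by contradiction: suppose the conclusion fails for some $\alpha \in (0,1)$, so that there exist sequences $r_k \to 0$, solutions $u_k$ of \eqref{Eq1}, and admissible base polynomials $P_k = P_{M_k, p_k, z_k, \beta_k}$ with $F(M_k, p_k, z_k, 0, 0) = \beta_k$ and $\norm{u_k - P_k}_{L^\infty(Q_{r_k})} \leq r_k^{2+\alpha}$, yet no update with the stated coefficient bounds improves the approximation on $Q_{\sigma r_k}$. Introduce the blow-up
\[ v_k(y, s) := r_k^{-(2+\alpha)} \bigl[ u_k(r_k y, r_k^2 s) - P_k(r_k y, r_k^2 s) \bigr], \quad (y, s) \in Q_1, \]
so that $\norm{v_k}_{L^\infty(Q_1)} \leq 1$. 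Substituting $u_k = P_k + r_k^{2+\alpha} v_k$ into \eqref{Eq1}, subtracting $F(M_k, p_k, z_k, 0, 0) = \beta_k$, and dividing by $r_k^\alpha$ produces an equation $G_k(D^2 v_k, Dv_k, v_k; y, s) - \partial_s v_k = 0$ in $Q_1$, where $G_k$ is uniformly $(\lambda, \Lambda)$-elliptic on $\{\norm{N} \leq r_k^{-\alpha} \delta\}$ and vanishes at the origin.

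Passing to subsequences using the compactness afforded by $\Hp{0}4)-\Hp{0}5)$, one obtains $(M_k, p_k, z_k) \to (M_\infty, p_\infty, z_\infty)$ and $G_k \to L_\infty$ locally uniformly on $\Sym(n) \times \R^n \times \R$, with $L_\infty(N, q) := \tr(A_\infty N) + b_\infty \cdot q$, where $A_\infty := \nabla_M F(M_\infty, p_\infty, z_\infty, 0, 0)$ and $b_\infty := \nabla_p F(M_\infty, p_\infty, z_\infty, 0, 0)$; in the autonomous setting $F = F(M)$ of \S 5 one simply has $b_\infty = 0$. Applying Cor.\ref{scaledOD} to $v_k$ as a solution for $G_k$---whose effective smallness threshold $c_0 r_k^{-\alpha} \delta$ exceeds $\norm{v_k}_{L^\infty(Q_1)} \leq 1$ once $k$ is large---yields a uniform interior H\"older estimate, and hence equicontinuity. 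A further subsequence gives $v_k \to v_\infty$ uniformly on $\overline{Q_{3/4}}$, and viscosity-stability under uniform convergence of operators shows that $v_\infty$ is a bounded viscosity solution of
\[ \tr(A_\infty D^2 v_\infty) + b_\infty \cdot D v_\infty = \partial_s v_\infty \quad \text{in } Q_1. \]
Classical interior smoothness for this linear, constant-coefficient, uniformly parabolic equation furnishes a universal $C_\star$ and a quadratic polynomial $Q(y, s) = \tfrac12 y^t \tilde M y + \tilde p \cdot y + \tilde z + \tilde \beta s$ that also solves the same equation (so $\tr(A_\infty \tilde M) + b_\infty \cdot \tilde p = \tilde \beta$), with $\norm{\tilde M}, \abs{\tilde p}, \abs{\tilde z}, \abs{\tilde \beta} \leq C_\star$ and $\norm{v_\infty - Q}_{L^\infty(Q_\sigma)} \leq C_\star \sigma^3$ for all $\sigma \in (0, 1/2)$.

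Fix $\sigma$ small (depending on $C_\star$ and $\alpha$) so that $2 C_\star \sigma^{1-\alpha} \leq 1$, and unscale to form the candidate update:
\[ M' := M_k + r_k^\alpha \tilde M, \quad p' := p_k + r_k^{1+\alpha} \tilde p, \quad z' := z_k + r_k^{2+\alpha} \tilde z, \quad \beta' := F(M', p', z', 0, 0). \]
The size bounds $r_k^2 \norm{M' - M_k},\, r_k \abs{p' - p_k},\, \abs{z' - z_k} \leq C r_k^{2+\alpha}$ are immediate, and the constraint $F(M', p', z', 0, 0) = \beta'$ is built in. A first-order Taylor expansion of $F$ around $(M_k, p_k, z_k, 0, 0)$ using $\Hp{0}4)-\Hp{0}5)$ together with $\tr(A_\infty \tilde M) + b_\infty \cdot \tilde p = \tilde \beta$ gives $\beta' - \beta_k = r_k^\alpha \tilde \beta + o_k(r_k^\alpha)$, which yields the remaining size bound $r_k^2 \abs{\beta' - \beta_k} \leq C r_k^{2+\alpha}$ and the scaling identity
\[ u_k(x, t) - P_{M', p', z', \beta'}(x, t) = r_k^{2+\alpha}\bigl[ v_k(y, s) - Q(y, s) \bigr] + o_k(r_k^{2+\alpha}) \quad \text{on } Q_{\sigma r_k}. \]
Passing $k \to \infty$ and combining $\norm{v_k - v_\infty}_{L^\infty(Q_\sigma)} \to 0$ with $\norm{v_\infty - Q}_{L^\infty(Q_\sigma)} \leq C_\star \sigma^3$ and the choice of $\sigma$ gives $\norm{u_k - P_{M', p', z', \beta'}}_{L^\infty(Q_{\sigma r_k})} \leq (\sigma r_k)^{2+\alpha}$ for $k$ large, contradicting the failure hypothesis. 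The principal technical hurdles are the uniform convergence $G_k \to L_\infty$---which crucially relies on the modulus $\omega$ of $\Hp{0}5)$ as $r_k^\alpha D^2 v_k$ ranges over bounded but not vanishing matrices---and the clean matching of the nonlinear constraint $F(M', p', z', 0, 0) = \beta'$, which is accomplished by defining $\beta'$ from $F$ itself rather than by additive perturbation.
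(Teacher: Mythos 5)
Your compactness/blow-up argument is correct in outline, but it takes a genuinely different route from the paper in its second half. The paper does not argue by contradiction: after the same rescaling $u = P_{M,p,z,\beta} + r^{2+\alpha}w(\cdot/r,\cdot/r^2)$ and the same key input (the oscillation decay of Cor.\ref{scaledOD}, resp.\ Prop.\ref{NOD} for general $F$), it works directly at scale $r$: it solves the linearized constant-coefficient problem $\tr[DF(M)D^2h]-h_t=0$ in $Q_{3/4}$ with boundary data $w$, transfers the H\"older modulus of $w$ to $h$, compares $w$ with $h$ through the explicit barriers $h_{\mu^{\pm}}=h\pm\mu(\abs{x}^2-(3/4-\epsilon)^2)\mp 4\epsilon^{\alpha_0}$ and the maximum principle (this is exactly where the modulus $\omega$ of $\Hp{0}5)$ enters quantitatively), takes the Taylor polynomial $\tilde P$ of $h$, and restores the constraint by an intermediate-value correction $s_0\abs{x}^2/2$ with $s_0\to 0$ as $r\to 0$. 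Your route replaces the $h$-comparison by viscosity stability along a subsequence plus interior regularity of the limit equation; it is softer and shorter, avoids the barrier computation, and your device of defining $\beta':=F(M',p',z',0,0)$ makes the constraint automatic instead of requiring the $s_0$-correction. The price is the usual one: the dependence of $r_0$ on $(K,\omega,\delta)$ becomes non-explicit (harmless, since only existence is asserted), and the quantifiers need care — $C$ and $\sigma$ must be fixed from the universal interior estimate constant before assuming failure, and the contradicting sequence must be allowed to carry varying operators $F_k$ (with common $\lambda,\Lambda,K,\delta,\omega$), since $r_0$ may not depend on the particular $F$; one then extracts $A_k=\nabla_M F_k(M_k,p_k,z_k,0,0)\to A_\infty$ directly, using the common modulus $\omega$ for the locally uniform convergence $G_k\to L_\infty$.

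Two points need repair, neither fatal. First, the limit operator carries no drift: in the $p$-slot of $G_k$ the argument is $p_k+r_kM_ky+r_k^{1+\alpha}q$, so by $\Hp{0}4)$ its contribution to $G_k$ is $O(Kr_k^{1-\alpha})$ uniformly for bounded $q$, and likewise for the $z$- and $(x,t)$-slots; hence $L_\infty(N)=\tr(A_\infty N)$, not $\tr(A_\infty N)+b_\infty\cdot q$. This matters for universality: had a drift with only $\abs{b_\infty}\leq K$ survived, the constant $C_\star$ from linear interior estimates, and hence $C,\sigma$, would depend on $K$, contrary to the statement. Moreover your Taylor step $\beta'-\beta_k=r_k^{\alpha}\tilde\beta+o_k(r_k^{\alpha})$ is consistent only with the driftless relation $\tr(A_\infty\tilde M)=\tilde\beta$; with your stated relation $\tr(A_\infty\tilde M)+b_\infty\cdot\tilde p=\tilde\beta$ you would instead get $\beta'-\beta_k=r_k^{\alpha}(\tilde\beta-b_\infty\cdot\tilde p)+o_k(r_k^{\alpha})$. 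Since the correct limit is driftless, the argument stands once this is stated correctly. Second, for general $F$ the uniform equicontinuity of $v_k$ cannot be quoted from Cor.\ref{scaledOD} as literally stated (it assumes $F:\Sym(n)\rightarrow\R$); you need the generalized decay Prop.\ref{NOD} of \S 6, which applies here because $\norm{\nabla_q\wt F},\norm{\nabla_v\wt F}=O(r_kK)$ and the rescaled operator satisfies the approximate equation \eqref{Hnu1} for large $k$ — precisely the reduction the paper carries out before running the \S 5 argument.
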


We first complete the proof of Thm.\ref{Main} by assuming Prop.\ref{IQA}.

\begin{proof}[Proof of Thm.\ref{Main}] Modulo translations of coordinates, by virtue of Lem.\ref{pwEstimate}, it suffices to find $\hat{r}_0 $ and a sequence of quadratic polynomials
\[
P_k (M_k, \beta_k):=  \frac{1}{2} x^tM_k x + p_k \cdot x + z_k + \beta_k t, \quad \norm{M_k}, \abs{p_k},\abs{\beta_k} \leq \delta
\]
such that 
\[
i)\;  F(M_k,p_k, z_k, 0, 0) - \beta_k  = 0;  \quad 
ii) \;\norm{u - P^k}_{L^{\infty} (Q_{\sigma^k r_0})} \leq \sigma^{k (2 + \alpha)} \hat{r}_0^{2 + \alpha};
\] 
\[
iii)
\begin{split}
&  \sigma^{2k} \hat{r}_0^{2}\norm{M_k - M_{k+1}},  \quad \sigma^k \hat{r}_0 \abs{p_k - p_{k+1}} \leq  C \sigma^{k (2 + \alpha)} \hat{r}_0^{2+\alpha} ,\\
& \abs{z_k - z_{k+1}} , \quad \sigma^{2k} \hat{r}_0^{2}\abs{\beta_k - \beta_{k+1}} \leq C \sigma^{k (2 + \alpha)} \hat{r}_0^{2+\alpha} ,
\end{split}
\]
where $C, \sigma$ are constants determined in Eq.\eqref{rc4} and Eq.\eqref{rc5} and $\hat{r}_0 \leq C^{-2} \delta^2$.

Let $r_0$ be the constant given in Prop.\ref{IQA} and take
\begin{equation}
\label{c4}
\hat{r}_0 := \min \{r_0, C^{-2} \delta^2 \} , \quad \mu_2   =\hat{r}_0^{2 + 1/2}.
\end{equation}
We shall construct the polynomials by induction. When $k = 0$, let $P_0 = 0$. Assume that we have constructed $P_k$, then the existence of $P_{k+1}$ with the desired properties follows immediately from Prop.\ref{IQA}. This completes the proof.
\end{proof}

\medskip

Now we are left to prove Prop.\ref{IQA}. In this section, we shall consider the case that $F$ only depends on $M \in \Sym (n)$. The proof for general $F$ shall be discussed in the next section.

\begin{proof}[Proof of Prop.\ref{IQA} for special $F$] Let $w : Q_1 \rightarrow \R$ be the function such that
\[
u  (rx,r^2 t) = P_{M, p,z, \beta} (rx, r^2t) + r^{2+\alpha} w(rx, r^2t).
\]
Then $w$ satisfies the following equation
\[
\tilde{F} (D^2 w) - w_t = 0 \text{ in } Q_1,  \; \text{ where } \tilde{F} (N)  = \frac{1}{r^{\alpha}} \smp{ F(M+r^{\alpha} N) - F(M)}.
\]
Note that $\tilde{F}$ satisfies $\Hp{0}1)-\Hp{0}3)$ with $\tilde{\delta} = r^{-\alpha}\delta$ and $\norm{w}_{L^{\infty}} \leq 1$. Hence $w$ satisfies
\begin{equation}
\label{oscprop}
\osc_{Q_{\rho} (x_0, t_0)} w \leq 4 \rho^{\alpha_0}, \quad \forall   \rho \geq \sqrt{ r^{\alpha}/ c_0\delta} , (x_0, t_0 ) \in Q_1,
\end{equation}
where $\alpha_0$ is the universal constant given in Cor.\ref{scaledOD}.

Now solve the following linear problem 
\[
\begin{cases}
\tr[DF(M) D^2 h ] -h_t= 0 & \text{ in } Q_{3/4} \\
h = w & \text{ on } \partial_p Q_{3/4}, 
\end{cases}
\]
where $\partial_{p} Q_1$ is the standard parabolic boundary.  Since $w$ satisfies \eqref{oscprop} on $Q_{3/4}$ and thus on $\partial_{p} Q_{3/4}$, by linear theory, we conclude that
\begin{equation}
\label{holderControl}
\osc_{Q_{\rho}(x_0, t_0) \cap Q_{3/4}} h \leq 4 \rho^{\alpha_0} , \quad \forall   \rho \geq \sqrt{ r^{\alpha}/ c_0\delta} , (x_0, t_0) \in Q_{3/4}.
\end{equation}

Fix $\epsilon > 0$,  we may choose $r$ small enough such that
\begin{equation}
\label{rc1}
\sqrt{r^{\alpha}/c_0\delta} < \epsilon /2.
\end{equation}
Then by \eqref{holderControl}, 
\begin{equation}
\label{holderControl2}
\norm{w - h}_{\partial_{p} Q_{3/4- \epsilon}} \leq 4 \epsilon^{\alpha_0}.
\end{equation}

Meanwhile, by linear theory, in $ Q_{3/4 -\epsilon}$,
\[
\epsilon \norm{Dh},  \epsilon^2 \norm{D^2 h } ,  \epsilon^2 \norm{h_t} ,\epsilon^3 \norm{D^3 h} \leq \tilde{C} \text{ universal. }
\]
Hence,  by taking $\tilde{P} = \tilde{P}_{\tilde{M}, \tilde{p}, \tilde{z}, \tilde{\beta}}$ to be the Taylor expansion of $h$ at $(0,0)$,
\[
\norm{h  - \tilde{P}}_{L^{\infty}(Q_{\sigma})} \leq \tilde{C} \sigma^3,  \quad \forall \sigma \leq 1/2.
 \]

\smallskip 

Next we control the difference between $w$ and $h$ in $Q_{3/4-\epsilon}$ by maximum principle (equivalently, the definition of viscosity solutions). Consider
\[
h_{\mu^{\pm}} := h \pm \mu ( \abs{x}^2 - (3/4-\epsilon)^2) \mp 4 \epsilon^{\alpha_0}.
\]
By taking  $r$ small enough such that
\begin{equation}
\label{rc2}
r^{\alpha} (\tilde{C}/ \epsilon^2  + \mu)< \delta /2 ,
\end{equation}
we have 
\[
\norm{ D^2 h (x) \pm \mu I }_{L^{\infty} (Q_{3/4 -\epsilon})} \leq r^{-\alpha} \delta/2.
\]
Thus
\[
\begin{split}
\tilde{F} (D^2h_{\mu^{+}}) - \partial_t  h_{\mu^{+}} \geq  \tr [DF(M) D^2 h] - h_t   + \lambda n  \mu  -\frac{\tilde{C}}{\epsilon^2} \omega( r^{\alpha}/\epsilon^2 ) ;
\end{split}
\]
and
\[
\begin{split}
\tilde{F} (D^2h_{\mu^{-}}) - \partial_t h_{\mu^{-}} \leq   \tr [DF(M) D^2 h] - h_t   - \lambda n  \mu  + \frac{\tilde{C}}{\epsilon^2}\omega( r^{\alpha}/\epsilon^2 ) .
\end{split}
\]
By choosing $r$ small enough such that
\begin{equation}
\label{rc3}
\frac{\tilde{C}}{\epsilon^2}\omega( r^{\alpha}/\epsilon^2 )  < \lambda n \mu, 
\end{equation}
we obtain that
\[
\tilde{F} (D^2 h_{\mu^+}) - \partial_t h_{\mu^+}  \geq 0  \text{ and } \tilde{F} (D^2 h_{\mu^-}) - \partial_t h_{\mu^- } \leq 0.
\] 
Meanwhile, by \eqref{holderControl2} we have
\[
h_{\mu^{+}} \leq w  \text{ on } \partial_{p} Q_{3/4  - \epsilon} ,\quad h_{\mu^-} \geq w  \text{ on } \partial_{p} Q_{3/4 - \epsilon}.
\]
Therefore, by the maximum principle, 
\[
 \norm{w - h}_{L^{\infty}( Q_{3/4 -\delta}) } \leq \mu  + 4 \epsilon^{\alpha_3}.
\]
Then, it follows that
\[
\norm{w - \tilde{P}}_{L^{\infty} ( Q_{\sigma} )} \leq  \mu + 4 \epsilon^{\alpha_0} + \tilde{C} \sigma^3.
\]

Next, we need to give a slight modification of $\tilde{P}$ because $\tilde{F} (\tilde{M}) - \tilde{\beta}$ may not equal $0$. Introduce a new parameter $s$. Note that if $r^{-\alpha} s < \delta/2$, then
\[
\tilde{F} (\tilde{M} + s I) - \tilde{\beta} \leq  n\Lambda s + \frac{\tilde{C}}{\epsilon^2 }\omega ( r^{\alpha} (\tilde{C}/\epsilon^2 + s))
\]
and
\[
\tilde{F} (\tilde{M} + s I) - \tilde{\beta} \geq \lambda s  - \frac{\tilde{C}}{\epsilon^2 }\omega ( r^{\alpha} (\tilde{C}/\epsilon^2 + s)).
\]
Thus, by varying $s$, $\tilde{F} (\tilde{M} + s I) - \tilde{\beta}$ changes sign. Therefore, there exists $s_0$  such that
\[
\tilde{F} (\tilde{M} + s_0 I) - \tilde{\beta}=0.
\]
Moreover, the above two inequalities also show that $s_0 \rightarrow 0$ when $r \rightarrow 0$.

Now, we first choose $\sigma$ universal such that
\begin{equation}
\label{rc4}
\tilde{C}\sigma^{\alpha} < 1/4 ,
\end{equation}
 then choose $\mu, \epsilon$ universal such that
\begin{equation}
\label{rc5}
\mu \leq \sigma^{2 + \alpha} /4,  \quad  4 \epsilon^{\alpha_0} \leq \sigma^{2+\alpha} /4,
\end{equation}
then choose $r_0$ according to Eq.\eqref{rc1}, Eq.\eqref{rc2} and Eq.\eqref{rc3}. Finally, we take $r_0$ small enough so that
\[
 s_0 \leq \sigma^{\alpha}/4.
\]

In this way, we obtain that
\[
\norm{w  - (\tilde{P} + s_0 \abs{x}^2/2 )}_{L^{\infty}( Q_{\sigma} )} \leq \sigma^{2+\alpha} 
\]
with
\[
 \norm{\tilde{M} }, \abs{\tilde{p}}, \abs{\tilde{z}}, \abs{\tilde{\beta}} \leq   C \text{ universal. }
\]
The proof is completed by taking 
\[
P' (x, t) := P (x, t)   + r^{2 + \alpha}\midp{ \tilde{P} \smp{\frac{x}{r}, \frac{t}{r^2}}  + \frac{s_0}{2} \abs{\frac{x}{r}}^2}.
\]
\end{proof}

\medskip

\section{Adaption to Prove Prop.\ref{IQA} for general $F$}
Now we explain how the above proof can be modified to establish Prop.\ref{IQA} in general setting.

From the proof in \S5, we see that the only property about $w$ that we have used is the oscillation decay property (Prop.\ref{OD} and its corollary). In the case that $F$ depends on more variables, we can generalize Prop.\ref{OD} to the following form.

\begin{prop} 
\label{NOD}
Suppose that $F$ satisfies $\Hp{0}1)-\Hp{0}3)$ and $u \in C(\overline{Q}_1)$ satisfies
\[
\norm{u}_{L^{\infty} (Q_1)  } \leq c_0\delta .
\]

Then there exists a universal constant $\nu_0 \in (0,1)$ such that the following statement holds: If
\begin{equation}
\label{Hnu1}
\abs{F[u] - u_t} \leq \nu_0 c_0 \delta  \text{ in } Q_1
\end{equation}
and 
\begin{equation}
\label{Hnu2}
\norm{\nabla_p F } \leq 1, \norm{\nabla_z F}  \leq \nu_0   \text{ in } \mathcal{U}_{\delta},
\end{equation}
then
\[
\osc_{Q_{1/3}} u \leq (1 - \nu_0 ) \osc_{Q_1} u.
\]
\end{prop}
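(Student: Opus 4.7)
My plan is to mimic closely the proof of Prop.\,\ref{OD}, treating the extra $(p,z,x,t)$-dependence of $F$ and the non-vanishing residual $F[u] - u_t$ as small perturbations of the pure-Hessian operator $G(M) := F(M, 0, 0, 0, 0)$. By $\Hp{0}3)$ one has $G(0) = 0$, and $G$ inherits the ellipticity constants $(\lambda, \Lambda)$ from $F$, so $G$ satisfies $\Hp{0}1)-\Hp{0}3)$. The strategy is then to quantify how much of the machinery of Sections 3 and 4 (built for $G$-type operators) tolerates the perturbations coming from $F - G$ and from the residual assumed in \eqref{Hnu1}.

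First I would revisit the ABP-type estimate Lem.\,\ref{ABP2} and the barrier argument Lem.\,\ref{Barrier}. At every contact point $(x^*, t^*)$ where $u$ is touched from below by a concave parabola $P_{(y,s;a)}$ of opening $a \in (0, c_0 \delta]$, the pointwise derivatives of $u$ (which exist a.e.\ on the contact set after sup--inf convolution, cf.\ Thm.\,\ref{Alex}) satisfy $D^2 u \ge -aI$, $u_t \le a$, and crucially $|Du| \le 2a$, since $Du$ agrees with $DP_{(y,s;a)} = -a(x^* - y)$ at contact and $|x^* - y| \le 2$. Combined with $|u| \le c_0 \delta$ and the perturbation bounds in \eqref{Hnu2}, the approximate equation \eqref{Hnu1} yields
\[
G(D^2 u) - u_t \;\le\; \bigl(F[u] - u_t\bigr) + \bigl|G(D^2 u) - F[u]\bigr| \;\le\; \nu_0 c_0 \delta + 2a + \nu_0 c_0 \delta
\]
at every such contact point. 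Plugged into the proofs of Lem.\,\ref{LB}, Lem.\,\ref{ABP2}, and Lem.\,\ref{Barrier}, the explicit margins of order $\lambda \delta$ available in \eqref{UseEq1}, \eqref{UseEq2}, and \eqref{UseEq3} absorb the additional errors provided $\nu_0$ is chosen small (universally). Hence Prop.\,\ref{HoC} and Prop.\,\ref{CHoC} extend verbatim for $u$ with the same universal constant $c_1$.

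With the modified contact-set estimates in hand, Lem.\,\ref{decay sup} applies to $u$ with respect to $G$ (up to the $\nu_0 c_0 \delta$ tolerance), producing a point $(x_0, t_0) \in \overline{Q}_{11/24}$ at which a large fraction of $\PB_{T_0}^{\theta_0}(x_0, t_0)$ lies in $\{u \le \gamma + O(\nu_0)\delta'\}$. The analogous statement for $\Gamma - u$ is obtained by working with $\tilde G(M) := -G(-M)$, which also satisfies $\Hp{0}1)-\Hp{0}3)$ and for which $\Gamma - u$ is an approximate supersolution with the same residual size. The intersection lemma Lem.\,\ref{intersection2} then produces a contradiction exactly as in Prop.\,\ref{OD}, once $\nu_0$ is small enough to render the two measure-fraction estimates incompatible.

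The main obstacle is the bookkeeping for the smallness of $\nu_0$: it plays three roles simultaneously (residual tolerance in \eqref{Hnu1}, zero-order perturbation bound in \eqref{Hnu2}, and the final oscillation-decay factor), and must be tuned against several universal constants from Lem.\,\ref{Barrier} and the constant $c_0$ of Prop.\,\ref{HoC}. Notably, the hypotheses do not directly bound $\nabla_{x,t} F$, but this is circumvented because the residual is controlled along $u$ itself rather than through a pointwise decomposition of the operator; the only place where $F$ needs to be evaluated away from $u$ is in the explicit barrier construction of Lem.\,\ref{Barrier}, and there one uses $G$ instead of $F$ and pays only the universal error estimated above.
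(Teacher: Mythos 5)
Your reduction to the frozen operator $G(M):=F(M,0,0,0,0)$ contains the essential gap. The displayed estimate $\abs{G(D^2u)-F[u]}\leq 2a+\nu_0 c_0\delta$ requires comparing $F(N,p,z,x,t)$ with $F(N,p,z,0,0)$ at points $(x,t)$ far from the origin, and nothing in the hypotheses controls this: Prop.~\ref{NOD} assumes only $\Hp{0}1)-\Hp{0}3)$ together with \eqref{Hnu1}--\eqref{Hnu2}, so no bound on $\nabla_{x,t}F$ is available ($\Hp{0}4)$ is not assumed, and even a bound by $K$ would produce an error of size $K$, not $O(a)+O(\nu_0c_0\delta)$). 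Condition $\Hp{0}3)$ only gives $F(0,0,0,x,t)=0$, i.e.\ agreement of the frozen operators at $M=0$; for $M\neq 0$ they may differ by as much as uniform ellipticity allows, of order $\Lambda n\norm{M}$, which is fatal exactly where you need the comparison: in the eigenvalue step of Lem.~\ref{LB}/\eqref{UseEq2} (where $\norm{M}$ is the quantity being bounded) and in verifying the strict subsolution property \eqref{UseEq3} of the barrier of Lem.~\ref{Barrier}. Your closing remark that the $(x,t)$-dependence is ``circumvented because the residual is controlled along $u$'' is the right instinct, but it contradicts your displayed inequality, which is precisely a pointwise decomposition through $G$. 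The paper's proof (Prop.~\ref{NHoC}, estimate \eqref{NUseEq2}) never moves the $(x,t)$-argument: at each contact point it keeps $(x,t)$ fixed, uses uniform ellipticity in $M$ at that fixed $(p,z,x,t)$, and anchors at $F(0,0,0,x,t)=0$, so the only price is $\norm{\nabla_pF}\abs{Du}+\norm{\nabla_zF}\abs{u}\leq 2a+\nu c_0\delta$, which \eqref{Hnu2} controls. The correct fix of your argument is to freeze only $(p,z)$ (a family $G_{x,t}(M):=F(M,0,0,x,t)$ used pointwise), which is then the paper's proof.

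A second, quantitative point: the perturbations must be absorbed relative to $a$, not relative to $\delta$. In \eqref{UseEq2} and in the barrier argument the available margin is of order $a$ (the output is $D^2u\leq C a$, and the barrier's forcing term is $O(a)$), so the residual $\nu_0c_0\delta$ from \eqref{Hnu1} can only be absorbed when $a$ is at least comparable to $\nu_0 c_0\delta$; this is exactly why Prop.~\ref{NHoC} restricts the openings to $a\in(\nu c_0,c_0\delta)$, a lower bound absent from your bookkeeping. Your claim that ``margins of order $\lambda\delta$'' absorb the errors is accurate only for Lem.~\ref{LB}/\eqref{UseEq1}; without the lower bound on $a$ the constant in Lem.~\ref{ABP2}, and hence $c_1$, would cease to be universal.
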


Before proving Prop.\ref{NOD}, we first explain how it leads to the proof of Prop.\ref{IQA} in the case that $F$ depends on more variables.

\begin{proof}[ Proof of Prop.\ref{IQA} for general $F$]
One simply follows the argument in \S5. Consider 
\[
\tilde{F} [w] - \partial_t w,
\]
where $w$ is given by
\[
u (rx, r^2 t) = P_{M,p,z,\beta}(rx, r^2 t) + r^{2+\alpha} w(x, t)
\]
with $(M,p,z,\beta)$ satisfying
\[
F (M,p,z,0,0) = \beta ,\quad  \norm{M}, \abs{p}, \abs{z} \leq \delta/2;
\]
and $\tilde{F}$ is given by
\[
\begin{split}
\wt{F} (N,q,v,x,t ):= \frac{1}{r^{\alpha}}\{ &  F(M +r^{\alpha} N, p + rM x +  r^{1+\alpha} q, P(rx, r^2t) +r^{2+\alpha}v, r x, r^2 t)   \\
&  - F(M, p+ rMx, P (rx, r^2t), rx, r^2 t) \}.
\end{split}
\]

Clearly $\tilde{F}$ still satisfies $\Hp{0}1)- \Hp{0}3)$; moreover, by $\Hp{0}4)$ of $F$, 
\[\begin{split}
 \norm{\nabla_{q} \wt{F}}_{L^{\infty}(\Ud)} = r \norm{\nabla_p F}_{L^{\infty} (\Ud  )} \leq r K, \quad  \norm{\nabla_{v} \wt{F}}_{L^{\infty}(\Ud)} = r^2 \norm{\nabla_z F}_{L^{\infty}(\Ud)} \leq r^{2} K  .
\end{split}\]
Thus we can take $r$ small (depending on $\nu_0 c_0 \delta$ and $K$) so that $\wt{F}$ satisfies Eq.\eqref{Hnu1} and Eq.\eqref{Hnu2}.

Now we apply Prop.\ref{NOD} to obtain the oscillation decay property of $w$. The remaining argument in \S 5 applies to the general $F$ up to trivial modifications.
\end{proof}

\smallskip

Now we explain how to modify the proof of Prop.\ref{OD} to establish Prop.\ref{NOD}. It suffices to establish the following version of local homogeneity.

\begin{prop}
\label{NHoC}
Let $c_0$ be the constant given in \eqref{Constant0} and $\PB^{-\theta}_{T_0} (x_0, t_0) \subset Q_1$. Let $F$ satisfy $\Hp{0}1) - \Hp{0}3)$ and $u \in C (\overline{B}_1 \times \R) $ be a bounded and locally uniformly semi-concave function. 

Suppose that $1 \leq \theta \leq 3$ and there exists $\nu \in (0,1)$ such that 
\[\begin{cases}
F (D^2u ) - u_t \leq \nu c_0 \delta , \quad \text{ in } Q_1, \\
\norm{\nabla_p F }_{L^{\infty}(\Ud)} \leq 1 , \quad \norm{\nabla_{z} F}_{L^{\infty}(\Ud)} \leq \nu. & 
\end{cases}
\]
Then there exists a universal constant $c_1$ such that the following statement holds:

For every $\PB_{T_1}^{\theta} (x_1, t_1)$ with $(x_1,t_1) \in \PB^{-\theta}_{T_0} (x_0, t_0)$ and every $a \in (\nu c_0, c_0 \delta) $, if
\[
(\PB_{T_1}^{\theta} (x_1, t_1) \cap \{(x, t) |\; t = t_1 + T_1\})\cap (A_a (\overline{Q}_1) \cap \PB_{T_0}^{-\theta} (x_0,t_0) ) \neq \emptyset,   
\]
then 
\[
 \abs{ A_{c_1^{-1} a} (\overline{Q}_1) \cap \PB^{-\theta}_{T_0} (x_0, t_0)   \cap  \PB^{\theta}_{T_1} (x_1, t_1)  }  \geq c_1  \abs{\PB^{\theta}_{T_1} (x_1, t_1)} .   
\]
\end{prop}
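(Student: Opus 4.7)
The plan is to mirror the proof of Prop.\ref{HoC} step by step, modifying each of the three supporting lemmas (Lem.\ref{LB}, Lem.\ref{ABP2}, Lem.\ref{Barrier}) so as to absorb the inhomogeneity $\nu c_0 \delta$ on the right-hand side of the supersolution inequality and the extra dependence of $F$ on $(p, z, x, t)$. First I would upgrade Lem.\ref{LB} to the statement: if $M \geq -aI$ and $F(M, p, z, x, t) \leq a + \nu c_0 \delta$ for some $p, z$ with $\abs{p},\abs{z}\leq \delta$, then $\norm{M} \leq \delta$. Since the contradiction argument in \eqref{UseEq1} only uses $M$-ellipticity with the other arguments frozen, and since the extra $\nu c_0 \delta$ on the right can be absorbed into the slack left by the definition of $c_0$, the conclusion is unchanged provided $\nu$ is taken to be a sufficiently small universal constant. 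An analogous update of Lem.\ref{ABP2}, combining the viscosity supersolution condition with a frozen evaluation of $(p, z, x, t)$ at the contact point and the perturbation trick $M - \epsilon I$, gives $D^2 u \leq (\Lambda n / \lambda) a$ as in \eqref{UseEq2}, so the area bound $\abs{A_a(E)} \geq c_2 \abs{E}$ survives with the same universal $c_2$, using the assumption $a > \nu c_0$ to dominate $\nu c_0 \delta$ by $a$-sized quantities.

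The heart of the matter is the barrier lemma Lem.\ref{Barrier}. The explicit barrier $\psi = P_1 + \varphi$ is unchanged, but the supersolution inequality \eqref{UseEq3} must now be verified for the full operator $F(D^2 \psi, D\psi, \psi, x, t) - \psi_t$. The $M$-contribution is identical to the original proof and produces a positive quantity of order $aC'(t/T_1)^{-\beta_1-1}$ times a large universal prefactor coming from $\beta_1,\beta_2$. The extra perturbations in the $p$- and $z$-arguments are bounded (using $\varphi\equiv 0$ is a solution, so that $F(0,0,0,x,t)=0$, to compare at frozen $(x,t)$) by $\norm{\nabla_p F}_{L^\infty}\,\norm{D\psi}_{L^\infty} + \norm{\nabla_z F}_{L^\infty}\,\norm{\psi}_{L^\infty} \leq \norm{D\psi}_{L^\infty} + \nu\,\norm{\psi}_{L^\infty}$, each of which is of order $a$ on $\wt{\PB}_2$. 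Enlarging $C'$ and $\beta_2$ once more if necessary, the supersolution inequality can be strengthened to $F(D^2\psi, D\psi, \psi, x, t) - \psi_t > \nu c_0 \delta$ throughout $\wt{\PB}_2 \cap \{t \geq \delta T\}$, provided $\nu$ is sufficiently small universally.

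With these three refined lemmas in hand, the remainder of the argument is a verbatim copy of the proof of Prop.\ref{HoC}: construct the family of contact parabolas $p_{y,s}(\xi,\tau) := P_1(\xi,\tau) + 2^{20}C((\tau-s) - \abs{\xi-y}^2/2)$ parametrized by $(y,s)\in\PB^{-1/2}_{r^2/16}(y_0, s_0-r^2/16)$, observe that their vertices fill a set $\tilde{E}$ of measure comparable to that of $\PB_1$, verify via the modified barrier lemma that each such $p_{y,s}$ indeed touches $u$ from below inside $Q_r(x_2,t_2)\subset \PB_0^-\cap \PB_1$, and finally apply the modified Lem.\ref{ABP2} to the resulting contact set. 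The principal technical obstacle is the barrier step: one must verify that a single universal choice of $(\beta_1,\beta_2,C')$ simultaneously handles the $M$-ellipticity and dominates both the new $O(a)$ lower-order perturbations and the right-hand side $\nu c_0 \delta$. The hypothesis $a > \nu c_0$ in the statement is precisely the room required for this bookkeeping to close, and the smallness of $\nu$ is dictated by the accumulated universal constants.
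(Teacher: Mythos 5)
Your overall strategy coincides with the paper's: re-verify the three quantitative ingredients of Prop.\ref{HoC} (Lem.\ref{LB}, Lem.\ref{ABP2}, Lem.\ref{Barrier}) with the inhomogeneity $\nu c_0\delta$ and the $(p,z,x,t)$-dependence absorbed into $a$-sized errors, and then observe, as the paper does, that the rest of the proof of Prop.\ref{HoC} never uses the equation. However, your upgraded Lem.\ref{LB} is stated with the hypothesis $\abs{p},\abs{z}\leq \delta$, and in that form it is false, nor can it be rescued by taking $\nu$ small: the structural bound is $\norm{\nabla_p F}\leq 1$ (not $\leq \nu$), so the $p$-perturbation may be as large as $\abs{p}\leq\delta$, which swamps the ellipticity gain $\lambda\delta$ in \eqref{UseEq1} whenever $\lambda$ is small (for instance $F(M,p)=\lambda \tr M + p\cdot e_1$ with $M=2\delta\, e\otimes e$, $p=-\delta e_1$ gives $F\leq 0\leq a$ while $\norm{M}=2\delta>\delta$). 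What is needed, and what is actually available at the contact points where the lemma is used, is the version with $\abs{p}\leq 2a$ (the gradient of a touching parabola of opening $a$ over $B_1$) and $\abs{z}\leq c_0\delta$; then the lower-order error is at most $2a+\nu c_0\delta\leq 3a$ and is absorbed exactly as in the paper's displayed chain \eqref{NUseEq2}. Your treatment of Lem.\ref{ABP2} and of the barrier does implicitly use the correct $O(a)$ bookkeeping (you bound $\norm{D\psi}$ and $\norm{\psi}$ by universal multiples of $a$), so the defect is localized to the formulation of the first lemma, but as written that step fails and must be restated with the contact-point bounds.

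A secondary issue is your repeated reliance on ``$\nu$ sufficiently small universal.'' The proposition asserts a universal $c_1$ for \emph{every} $\nu\in(0,1)$; the room comes not from smallness of $\nu$ but from the lower bound on $a$ (the paper's proof uses $a\geq \nu c_0\delta$, so that $\nu c_0\delta\leq a$), after which all errors are universal multiples of $a$ and are handled by enlarging the barrier constants $(\beta_1,\beta_2,C')$ within the slack built into $c_0$. Since the only application, Prop.\ref{NOD}, uses a small universal $\nu_0$, this does not derail the program, but as a proof of the statement as given it establishes less than what is claimed; the paper's argument requires no smallness of $\nu$.
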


\begin{proof}
One observes that \eqref{UseEq1} and \eqref{UseEq2} are still valid under the assumption that $a \geq \nu c_0 \delta$, $\abs{p} \leq 2a$ and $\abs{z} \leq  c_0 \delta $, i.e. 
\begin{equation}
\label{NUseEq2}
\begin{split}
 2a \geq a + \nu c_0 \delta & \geq F (M - \epsilon I, p, z,x,t ) \geq F\smp{Ca e\otimes e  - (a+\epsilon)I, p, z, x, t } \\
 & \geq \lambda [ (C-1)a - \epsilon ]- \Lambda (n-1) (a + \epsilon) -  2 a - \nu c_0\delta \\
& \geq \smp{ \lambda (C-1) - \Lambda(n +3) }a - O(\epsilon).
\end{split}
\end{equation}
This is sufficient to estimate $C$ (see the proof of Lem.\ref{ABP2}) and to construct the barrier $\psi$ (see the proof of Lem.\ref{Barrier}). The rest of the proof of Prop.\ref{HoC} does not involve the usage of the equation, hence it can be directly applied to establish Prop.\ref{NHoC}.\end{proof}

Cor.\ref{cmain} follows by applying Thm.\ref{Main} to the case that $w = u - \varphi$ and 
\[
\begin{split}
G(M,p,z,x,t) & := F(D^2 \varphi + M, D \varphi + p, \varphi +z, x, t ) - F[\varphi]. \\
\end{split}
\]

\textbf{Acknowledgment:} The author would like to express his great gratitude to Prof. Ovidiu Savin, from whom the author has learnt many techniques and arguments in this paper. The author would like to express his thanks to Prof. Duong Hong Phong for his constant encouragement and helpful advice. The author also wants to thank Ye-Kai Wang who has helped to go through many technical details. The author would like to thank the referee for many helpful suggestions.


\end{document}